\date{}
\def\dis{\displaystyle}
\def\nd{\noindent}
\def\thend{\rule{3mm}{3mm}}
\def\Re{\mathbb{R}}
\newtheorem{theorem}{Theorem}[section]
\newtheorem{cor}{Corollary}[section]
\newtheorem{lem}{Lemma}[section]
\newtheorem{rmk}{Remark}[section]
\begin{document}

\title[Ground states of elliptic problems over cones]{Ground states of elliptic problems over cones}
\vspace{1cm}

\author{Giovany M. Figueiredo}
\address{G. M. Figueiredo \newline  Universidade de Brasília, Departamento de Matemática, 70910-900, Brasília, DF, Brazil}
\email{\tt giovany@unb.br} 

\author{Humberto Ramos Quoirin}
\address{H. Ramos Quoirin \newline CIEM-FaMAF, Universidad Nacional de C\'{o}rdoba, (5000)
	C\'{o}rdoba, Argentina}
\email{\tt humbertorq@gmail.com}

\author{Kaye Silva}
\address{K. Silva \newline Instituto de Matemática e Estatística
	Universidade Federal de Goiás,
	74001-970, Goiânia, GO, Brazil}
\email {\tt kayesilva@ufg.br} 

\subjclass{35J20, 35J25, 35J60, 35J92} \keywords{variational methods, ground state, Nehari manifold}
\thanks{The first author was partially supported by CNPq/Brazil under the grants [304657/2018-2] and [407479/2018-0], and FAPDf  04/2017. The third author was partially supported by CNPq/Brazil under the grant [408604/2018-2]}

\begin{abstract}
Given a reflexive Banach space $X$, we consider a class of functionals $\Phi \in C^1(X,\Re)$ that do not behave in a uniform way, in the sense that the map $t \mapsto \Phi(tu)$, $t>0$, does not have a uniform geometry with respect to $u\in X$. Assuming instead such a uniform behavior within an open cone  $Y \subset X \setminus \{0\}$, we show that $\Phi$ has a ground state relative to $Y$.
Some further conditions ensure that this relative ground state is the (absolute) ground state of $\Phi$.  Several applications to elliptic equations and systems are given.
\end{abstract}

\maketitle

\tableofcontents

\section{Introduction}
\medskip

It is widely known that critical point theory and variational techniques play a major role in a host of problems arising in life sciences. Over the last decades, a strong development of these methods and their applications has been witnessed in connection with the qualitative study of differential equations, for these ones often admit a weak formulation as a critical point equation
\begin{equation}\label{e}
\Phi'(u)=0.
\end{equation}
Here $\Phi'$ is the Fr\'echet derivative of a $C^1$ functional $\Phi$, the so-called {\it energy functional}, defined on a Banach space $X$. In view of the physical motivation behind some of these problems, a special attention has been paid to {\it ground state} solutions of \eqref{e}, i.e. those minimizing $\Phi$ over the set of nontrivial solutions. 

The most succesful approach to show the existence of such solutions consists in minimizing $\Phi$ over the set $$\mathcal{N}=\mathcal{N}_\Phi:=\{u \in X \setminus\{0\};\ \Phi'(u)u=0\},$$
which under some mild conditions on $\Phi$ is a $C^1$ manifold, the so-called {\it Nehari manifold}. While a high amount of works has been applying this procedure (as well as the closely related {\it fibering method} \cite{P}) to a variety of problems in analysis, very few ones have provided an abstract framework showing what is essential to have  $c:=\inf_\mathcal{N} \Phi$ achieved by a ground state solution of \eqref{e}. These abstract results  \cite{BWW,FP, FRQ,I1, SW} are mostly devoted to the situation where $\Phi$ has a local minimum (which is generally assumed to be $\Phi(0)=0$) and displays a uniform mountain-pass geometry, in the sense that for any $u \in X \setminus \{0\}$ the following property holds:\\
\begin{itemize}
	\item [(H1)] the map $t \mapsto \Phi(tu)$, defined in $(0,\infty)$, has a unique critical point $t_u$, which is a global maximum point (in particular $\Phi(t_uu)>0$). \\
\end{itemize} This scenario typically arises in superlinear elliptic problems, as the  prototype 
\begin{equation}
\label{m}
-\Delta u= b(x)|u|^{r-2}u, \quad u \in H_0^1(\Omega),
\end{equation}
where $\Omega \subset \Re^N$ is a bounded domain, $r\in (2,2^*)$, and $b$ is a smooth coefficient such that $b(x)>0$ for all $x \in \Omega$.
This problem corresponds to \eqref{e} with
\begin{equation}\label{f}
\Phi(u)=\frac{1}{2}\int_\Omega |\nabla u|^2 -\frac{1}{r}\int_\Omega b(x)|u|^r, \quad u \in H_0^1(\Omega).
\end{equation}
More generally \cite{SW}, one may consider  the functional
$$\Phi(u)=\frac{1}{2}\|u\|^2 -I(u), \quad u \in H_0^1(\Omega)$$
where  $I(u):=\int_\Omega F(x,u)$, and $F$ is the primitive of a superlinear and subcritical term $f(x,u)$, in which case $\Phi$ is the  energy functional of the problem
\begin{equation}
-\Delta u= f(x,u), \quad u \in H_0^1(\Omega).
\end{equation}
In \cite{SW} the authors dealt with the class of  functionals $\Phi=I_0-I$, where $I_0$ is $p$-homogeneous for some $p>1$ (i.e. $I_0(tu)=t^pI_0(u)$ for any $t>0$ and $u \in X$) and satisfies
$c_0^{-1}\|u\|^p\leq I_0(u)\leq c_0\|u\|^p$
for any $u \in X$ and some $c_0>0$. This functional is inspired by the $p$-Laplacian problem
\begin{equation}
\label{m}
-\Delta_p u=\lambda|u|^{p-2}u+ f(x,u), \quad u \in W_0^{1,p}(\Omega),
\end{equation}
where $\lambda<\lambda_1(p)$, the first eigenvalue of $-\Delta_p$ on $X=W_0^{1,p}(\Omega)$, in which case
$I_0(u)=\int_\Omega \left(|\nabla u|^p - \lambda |u|^p\right)$. The existence of a ground state solution for \eqref{e} was then proved under some `$p$-superlinear' type conditions on $I$, see \cite[Theorem 13]{SW} for the details.
The homogeneity condition on $I_0$ was then removed in \cite[Theorem 1.2]{FRQ}  to handle problems involving nonhomogeneous operators such as the $(p,q)$-Laplacian operator, as well as Kirchhoff and anisotropic type operators.

The main purpose of this article is to set up a framework for functionals that do not have a uniform geometry on $X$, in the sense that the map $t \mapsto \Phi(tu)$ has not the same behavior for all $u \neq 0$. This is the case if
(H1) does not hold in the whole of $X \setminus \{0\}$, but rather in some part of it. A simple example  is given by \eqref{f} with $b$ vanishing in some part of $\Omega$ or changing sign, in which case (H1) holds only for those $u$ such that $\int_\Omega b(x)|u|^r>0$.
So it is reasonable to deal with functionals satisfying (H1) in an open cone of $X$, i.e. an open set $Y \subset X$ such that $t u \in Y$  for every $t>0$ and $u \in Y$. 
Note that  the condition 
$ \int_\Omega b(x)|u|^r>0$
clearly defines an open cone in $X$. 

Furthermore, we shall consider other geometries than (H1). More concretely, we aim at functionals such that the map $t \mapsto \Phi(tu)$ has at least a local minimum point whenever $u$ lies in an open cone.
This condition arises in several problems, as we shall see in our applications. 
Overall we shall provide conditions ensuring that the infimum of $\Phi$ over $\mathcal{N} \cap Y$ is achieved by a critical point of $\Phi$, which can then be considered as a {\it ground state relative to $Y$}, in the sense that it has the least energy among critical points in $Y$. Some further conditions on $\Phi$ over $X\setminus Y$ shall entail that $c$ is the ground state of $\Phi$.

The applications of our abstract results shall concern mostly two classes of functionals. The first one is given by 
\begin{equation}\label{cf}
\Phi=I_0-I
\end{equation}
 where $I_0,I \in C^1(X,\Re)$ are such that $I$ dominates $I_0$ at infinity (i.e $I$ grows faster than $I$, in absolute value) and one the following possibilities occur (in some cases both): 
\begin{itemize}
\item  $I_0$ is coercive in $Y_1=\{u \in X: I(u)>0\}$ so that $(H1)$ holds in $Y_1$, and $\Phi$ has a mountain-pass structure therein.
\item $I$ is anti-coercive in $Y_2=\{u \in X: I_0(u)<0\}$, i.e. $I(u) \to -\infty$ as $\|u\| \to \infty$, with $u \in Y_2$. It follows that $\Phi$ is coercive therein.
\end{itemize}

The first case holds for instance if $I_0(u)\geq C\|u\|^p$  for some $C>0$, $p>1$, and every $u \in Y_1$, and $\frac{I(tu)}{t^p} \to \infty$ as $t \to \infty$, uniformly for $u$ in weakly compact subsets of $Y_1$. This happens in particular if $I\geq 0$ and $Y_1=\{u \in X: I(u)>0\}$, as we shall see in several problems (cf. Corollaries \ref{c2}, \ref{c3} and \ref{ch}).

As for the second case, it only occurs if $I_0$ and $I$ take negative values. This can be observed in the prototype of {\it indefinite} type problems
$$-\Delta u =\lambda u +b(x)|u|^{r-2}u, \quad u \in H_0^1(\Omega),$$
where  $\lambda \in \Re$, $2<r<2^*$, and $b \in L^{\infty}(\Omega)$ changes sign. A similar structure arises in the $(p,q)$-Laplacian problem
 \begin{equation*}
-\Delta_p u -\Delta_q u = \alpha |u|^{p-2}u+\beta |u|^{q-2}u, \quad u \in W_0^{1,p}(\Omega),
\end{equation*}
where $1<q<p$ and $\alpha,\beta \in \Re$. In such case $\Phi=I_0-I$ with
$$I_0(u)=\frac{1}{q}\int_\Omega \left(|\nabla u|^q- \beta|u|^q \right)\quad \mbox{and} \quad I(u)=-\frac{1}{p}\int_\Omega \left(|\nabla u|^p- \alpha|u|^p \right).$$
We consider the open cones $$Y_1=\left\{u \in W_0^{1,p}(\Omega): \int_\Omega \left(|\nabla u|^p- \alpha|u|^p \right)<0\right\}$$ and $$Y_2=\left\{u \in W_0^{1,p}(\Omega): \int_\Omega \left(|\nabla u|^q- \beta|u|^q \right)<0 \right\},$$
which are nonempty if, and only if, $\alpha >\lambda_1(p)$ and $\beta>\lambda_1(q)$, respectively. 

 It turns out that there exist $\alpha_*(\beta) \geq \lambda_1(p)$ and $\beta_*(\alpha)\geq \lambda_1(q)$ having the following properties:
 \begin{enumerate}
\item  For $\beta<\beta_*(\alpha)$ there exists $C>0$ such that $I_0(u)\geq C\|u\|^q \quad \forall u \in \overline{Y}_1$.
\item  For $\alpha<\alpha_*(\beta)$ there exists $C>0$ such that
$ I(u)\le -C\|u\|^p\quad  \forall u \in \overline{Y}_2$. 
 \end{enumerate}
 Since $q<p$, we see that (H1) holds in $Y_1$ for $\beta<\beta_*(\alpha)$, whereas $\Phi$ is coercive in $Y_2$ for $\alpha<\alpha_*(\beta)$. It follows that 
 \begin{enumerate}
 	\item $c_1:=\displaystyle \inf_{\mathcal{N}\cap Y_1}\Phi$, the ground state of $\Phi$ relative to $Y_1$, 	is positive and achieved for $\alpha >\lambda_1(p)$ and $\beta<\beta_*(\alpha)$, 
 	\item $c_2:=\displaystyle \inf_{\mathcal{N}\cap Y_2}\Phi$, the ground state of $\Phi$ relative to $Y_2$, is negative and achieved for $\alpha<\alpha_*(\beta)$ and $\beta>\lambda_1(q)$. 
 \end{enumerate}
 In addition, it is clear that $\Phi \geq 0$ on $\mathcal{N} \setminus Y_2$, so that $c_2$ is the ground state level whenever it is achieved, whereas $c_1$ is the ground state level for $\alpha >\lambda_1(p)$ and $\beta< \lambda_1(q)$, since in this case $Y_2$ is empty and $\mathcal{N} \subset Y_1$.
This kind of structure also arises in a Kirchhoff equation (see section \ref{fe}), as well as in a fourth-order problem associated to a strongly coupled system (see section \ref{sh}).

 The second class of functionals to be considered has the form \begin{equation}
\label{fcc1}
\Phi=\frac{1}{\kappa}P +\frac{1}{\upsilon}\Upsilon+\frac{1}{\gamma}\Gamma
\end{equation}
where $P,\Upsilon,\Gamma \in C^1(X,\Re)$ are weakly lower semicontinuous and $\kappa$-homogeneous, $\upsilon$-homogeneous and $\gamma$-homogeneous, respectively, for some $0<\upsilon<\kappa<\gamma$. In particular, the sets 
$$Y_1=\left\{u \in X: \Upsilon(u)<0\right\} \quad
\mbox{and}
\quad Y_2=\left\{u \in X: \Gamma(u)<0\right\},$$
are open cones and we assume that for any $u \in Y=Y_1 \cap Y_2$ the map $t \mapsto \Phi(tu)$ has exactly two critical points (both nondegenerate) $0<t_u<s_u$, which are a local minimum and a local maximum points, respectively. Under these conditions, $c_1:=\displaystyle \inf_{\mathcal{N}\cap Y_1}\Phi$ is the ground state level of $\Phi$, cf. Corollary \ref{cc} below. We shall consider several functionals having this structure,  among which we highlight the one given by
$$P(u,v)=\|v\|_q^q, \quad \Upsilon(u,v)=\int_\Omega \left(|\nabla u|^2+|\nabla v|^2-2\lambda uv\right), \quad \mbox{and} \quad \Gamma(u,v)=-\|u\|_r^r,$$
defined in $X:=H_0^1(\Omega)\times H_0^1(\Omega)$, 
where $\lambda \in \Re$ and $2<q<r<2^*$. In this case $\Phi$ is the energy functional associated to the semilinear gradient system
\begin{equation*}
\left\{
\begin{array}{ll}
-\Delta u = \lambda v+|u|^{r-2}u \
& \mbox{in} \ \ \Omega, \ \ \\
-\Delta v =  \lambda u-|v|^{q-2}v \
&\mbox{in} \ \ \Omega, \ \ \\
u = v = 0 \ &\mbox{on} \ \
\partial\Omega,
\end{array}
\right.
\end{equation*}
which is treated with more generality in section \ref{kayeexample1}.

We stress that our abstract setting is new not only for homogeneous operators as the $p$-Laplacian, but also for nonhomogeneous ones as the $(p,q)$-Laplacian, Kirchhoff type operators, etc. For simplicity, we apply our results to elliptic problems with Dirichlet boundary conditions, but other boundary conditions can be handled, as well as problems in unbounded domains. Finally, let us mention that for the sake of space we have limited the bibliography to works dealing with the class of problems considered here via the Nehari manifold method (or the fibering method). 

The outline of this article is the following: in section 2 we state and prove our main abstract results. These ones are organized according to the behavior of the map $t \mapsto \Phi(tu)$ for $u \in Y$, where $Y$ is a nonempty open cone. In section 3 we derive some results for the classes of functionals \eqref{cf} and \eqref{fcc1} and discuss some examples. Finally, some applications of our results to pdes and systems of pdes are given in sections 4 and 5, respectively.

\medskip

\subsection*{Notation} Throughout this article, we use the following notation:

\begin{itemize}
	\item Unless otherwise stated $\Omega$ denotes a bounded domain of $\Re^N$ with $N\geq 1$.
		
	\item Given $r>1$, we denote by $\Vert\cdot\Vert_{r}$ (or $\Vert\cdot\Vert_{r,\Omega}$ in case we need to stress the dependence on $\Omega$) the usual norm in
	$L^{r}(\Omega)$, and by $r^*$ the critical Sobolev exponent, i.e. $r^*=\frac{Nr}{N-r}$ if $r<N$ and $r^*=\infty$ if $r \geq N$.
	
	\item Strong and weak convergences are denoted by $\rightarrow$ and
	$\rightharpoonup$, respectively.
	
	\item Given $f\in L^{1}(\Omega)$, we set $f^{\pm}:=\max(\pm f,0)$. The integral $\int_{\Omega}f$ is considered
	with respect to the Lebesgue measure. Equalities and inequalities involving
	$f$ shall be understood holding \textit{a.e.}.
	
	\item $\varphi_u:[0,\infty) \to \Re$ is the {\it fibering map} given by $\varphi_u(t)=\Phi(tu)$ for any $u \in X$.
	
	\item $J:X \rightarrow \Re$ is given by $J(u)=\Phi'(u)u$, and $\mathcal{N}:=J^{-1}(0) \setminus \{0\}$.
	
	\item $S$ is the unit sphere in $X$.
	
	\item $\partial Y$ and $\overline{Y}$ denote the boundary and the closure of $Y \subset X$ in the weak topology, respectively.
	
	\item Given $u \in X$ and $R>0$ we denote by $B(u,R)$ the open ball of radius $R$ around $u$.
	
	\item Unless otherwise stated, $C>0$ is a constant whose value may vary from one ocurrence to the other.
\end{itemize}

\medskip
\section{Main results}
\medskip

In this section $Y \subset X \setminus \{0\}$ is assumed to be a nonempty open cone, where $X$ is a reflexive Banach space, and $\Phi \in C^1(X,\Re)$ with $\Phi(0)=0$. We also assume that $\Phi$ and $J$ are weakly lower semicontinuous on $X$.

We shall consider different situations in accordance with the geometry of the fibering maps $\varphi_u$, for $u\in Y$. Overall we aim at finding conditions on $\Phi$ so that $$c:=\displaystyle \inf_{\mathcal{N}\cap Y}\Phi$$ is achieved and provides the ground state level of $\Phi$. 
A basic and general property needed in our minimization procedure is
the following  compactness condition:\\
\begin{itemize}
	\item [(HY)$_d$] If $(u_n) \subset \mathcal{N}\cap Y$ and $\Phi(u_n)\to d \in \mathbb{R}$, then $(u_n)$ has a subsequence  weakly convergent in $Y$. \\
\end{itemize} 

Our setting shall include two main situations in accordance with the number of critical points of the map $\varphi_u$ for $u \in Y$. First we assume that $\varphi_u$ has a unique critical point. In this case we do not require $J$ to be $C^1$ (although in most applications it is), so $\mathcal{N}$ is not necessarily a manifold.
\medskip
\subsection{Cones where $\varphi_u$ has a unique critical point}\strut\\

To begin with, we assume that (H1) holds within $Y$:

\begin{theorem}\label{thm2}
	Assume $(HY)_c$ and (H1) for every $u \in Y$. Then $c>0$  is  achieved by a critical point of $\Phi$. If, in addition,
	$J(u)\neq 0$ for every $u \in X \setminus Y$ with $u \neq 0$ then $c$ is the ground state level of $\Phi$.
\end{theorem}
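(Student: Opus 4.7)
My plan is to extract a minimizer of $c$ using weak compactness combined with fibering-map tricks, then prove criticality by a deformation argument. First observe that any $u\in\mathcal{N}\cap Y$ satisfies $\Phi(u)>0$: since $\varphi_u'(1)=J(u)=0$, the uniqueness in (H1) forces $t_u=1$, and (H1) gives $\Phi(u)=\varphi_u(t_u)>0$; in particular $c\ge 0$. Taking a minimizing sequence $(u_n)\subset\mathcal{N}\cap Y$, $(HY)_c$ furnishes along a subsequence a weak limit $u_0\in Y$. Weak lower semicontinuity of $J$ gives $\varphi_{u_0}'(1)=J(u_0)\le 0$, and since (H1) makes $\varphi_{u_0}'$ positive on $(0,t_{u_0})$ and negative on $(t_{u_0},\infty)$, this forces $t_{u_0}\le 1$. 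Setting $\bar u:=t_{u_0}u_0$, which lies in $\mathcal{N}\cap Y$ because $Y$ is a cone, weak lower semicontinuity of $\Phi$ together with $\varphi_{u_n}(t_{u_0})\le\varphi_{u_n}(1)=\Phi(u_n)$ (global maximum, using $u_n\in\mathcal{N}$) yield
\[
c\le\Phi(\bar u)=\varphi_{u_0}(t_{u_0})\le\liminf_n\varphi_{u_n}(t_{u_0})\le\liminf_n\Phi(u_n)=c,
\]
so $\bar u$ achieves $c$, and in particular $c>0$.

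The main obstacle is showing $\Phi'(\bar u)=0$ without a manifold structure on $\mathcal{N}$, which rules out a direct Lagrange multiplier. My plan is a deformation-plus-intermediate-value argument. Suppose for contradiction that $\Phi'(\bar u)\ne 0$ and fix $v\in X$ with $\Phi'(\bar u)v<0$. Continuity of $\Phi'$, openness of $Y$, and $\bar u\ne 0$ supply $\rho,r>0$ with $r<\|\bar u\|/2$, $B(\bar u,2r)\subset Y$, and $\Phi'(w)v\le-\rho$ for every $w\in B(\bar u,2r)$. Choose a Lipschitz cutoff $\xi:X\to[0,1]$ with $\xi\equiv 1$ on $B(\bar u,r)$ and $\mathrm{supp}\,\xi\subset B(\bar u,2r)$, and pick $0<\underline t<1<\overline t$ so far from $1$ that $\xi$ vanishes at $\underline t\bar u$ and at $\overline t\bar u$. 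Set $\gamma(t):=t\bar u$ and $\gamma_s(t):=\gamma(t)+s\,\xi(\gamma(t))\,v$ on $[\underline t,\overline t]$. A uniform first-order expansion then shows that, for sufficiently small $s>0$: (i) $\gamma_s$ stays in $Y\setminus\{0\}$ (since $\gamma$ does and $Y$ is open); (ii) $\gamma_s(\underline t)=\underline t\bar u$ and $\gamma_s(\overline t)=\overline t\bar u$; (iii) $\sup_{t\in[\underline t,\overline t]}\Phi(\gamma_s(t))<c$, the drop coming from $\Phi'(w)v\le-\rho$ inside the cutoff support and from $\Phi(t\bar u)$ being uniformly bounded below $c$ outside. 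Since (H1) gives $J(\underline t\bar u)>0>J(\overline t\bar u)$ and $J$ is continuous (as $\Phi\in C^1$), the intermediate value theorem produces $t^*\in(\underline t,\overline t)$ with $J(\gamma_s(t^*))=0$, that is, $\gamma_s(t^*)\in\mathcal{N}\cap Y$. But $\Phi(\gamma_s(t^*))<c$ contradicts the definition of $c$.

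For the final statement, the hypothesis $J(u)\ne 0$ for every $u\in X\setminus Y$ with $u\ne 0$ forces every nontrivial critical point $u$ of $\Phi$ into $Y$, because $\Phi'(u)=0$ implies $J(u)=\Phi'(u)u=0$. Hence $\mathcal{N}\subset Y$, every nontrivial critical point satisfies $\Phi(u)\ge c$, and since $c$ is itself attained at a critical point, $c$ is the (absolute) ground state level of $\Phi$.
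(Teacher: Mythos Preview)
Your argument is correct and follows the same overall strategy as the paper: first obtain a minimizer via fibering and weak lower semicontinuity, then show criticality by perturbing in a descent direction and locating a competitor in $\mathcal{N}\cap Y$ with lower energy. The first part is essentially identical to the paper's (the paper does not even bother with the inequality $t_{u_0}\le 1$, since $\Phi(t_{u_0}u_n)\le\Phi(u_n)$ already follows from (H1) for the sequence). For the criticality step, however, the paper's realization is considerably lighter than your cutoff-and-deform construction: instead of deforming the whole ray $t\mapsto t\bar u$ with a Lipschitz bump, the paper simply perturbs the \emph{point} $\bar u$ to $\bar u+\bar\sigma v\in Y$, observes that $\varphi'_{\bar u+\bar\sigma v}(1-\epsilon)>0>\varphi'_{\bar u+\bar\sigma v}(1+\epsilon)$ by continuity, so the fibering maximum $\bar t:=t_{\bar u+\bar\sigma v}$ lies in $(1-\epsilon,1+\epsilon)$, and then computes directly
\[
\Phi\bigl(\bar t(\bar u+\bar\sigma v)\bigr)-\Phi(\bar u)\le\Phi\bigl(\bar t(\bar u+\bar\sigma v)\bigr)-\Phi(\bar t\bar u)=\bar t\int_0^{\bar\sigma}\Phi'\bigl(\bar t(\bar u+\sigma v)\bigr)v\,d\sigma<0,
\]
using the sign condition on a small box in $(t,\sigma)$. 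This bypasses the cutoff, the uniform-drop estimate (iii), and the need to control the deformed path globally. Your approach buys nothing extra here, though it is a template that would adapt to settings where the ray structure is less rigid.
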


\begin{proof}
	We split the proof into two parts. First we proceed as in \cite{FRQ,SW} to show that $c$ is achieved. Given $u \in Y$ we know by (H1) that there exists exactly one $t_u>0$ such that $t_u u\in \mathcal{N} \cap Y$. In particular, this set is nonempty. It is also clear that $c \geq 0$. 
	Let $(u_n) \subset \mathcal{N} \cap Y$ be
	a minimizing sequence for $c$, i.e. $\Phi(u_n) \to c$. By $(HY)_c$ we can assume that $u_n \rightharpoonup u\in Y $. Since $\Phi$ is weakly lower semicontinuous and $t_{u_n}=1$  for every $n$, we have
	$c\leq \Phi(t_u u)\leq \liminf \Phi(t_u u_n) \leq \lim \Phi(u_n)=c$,
	i.e.  $\Phi(t_u u)=c>0$.  
	Finally, it is clear that $\mathcal{N} \subset Y$ if $J(u)\neq 0$ for every $u \in X \setminus Y$ with $u \neq 0$, so that $c=\inf_{\mathcal{N}}\Phi$ in this case.
	
	Next we proceed as in \cite{LWW} (see also \cite{CW}) to show that $c$ is a critical value of $\Phi$.
	Assume that $c$ is achieved by $u_0$ and this one is not a critical point of $\Phi$. Then there exists  $v \in X$ such that 
	$\Phi'(u_0)v <0$.
	By continuity we can find $\varepsilon, \delta >0$ small such that 
	\begin{equation}\label{bbs}
	\Phi'(t(u_0+ \sigma v))v <0,\;\;\; \mbox{ for } \ \  t \in [1-\epsilon, 1+ \epsilon] \ \ \mbox{and } \ \   \sigma  \in [-\delta, \delta].
	\end{equation}
	Moreover, since $$\Phi'((1-\epsilon)u_0)u_0=\varphi_{u_0}'(1-\epsilon)>0>\varphi_{u_0}'(1+\epsilon)=\Phi'((1+\epsilon)u_0)u_0$$ and $Y$ is open, there exists $\overline{\sigma} \in (0,\delta)$ such that $u_0+\overline{\sigma}v \in Y$ and $\varphi_{u_0+\overline{\sigma}v}'(1-\epsilon)>0>\varphi_{u_0+\overline{\sigma}v}'(1+\epsilon)$.
	It follows that $	t_{u_0+\overline{\sigma}v} \in (1-\epsilon, 1+\epsilon)$. Writing $\overline{t}={t}_{u_0+\overline{\sigma}v}$, 
	from (\ref{bbs}) we have
	$$
	\Phi(\overline{t}(u_0+ \overline{\sigma} v)) - \Phi(u_0)\leq   \Phi(\overline{t}(u_0+ \overline{\sigma} v)) - \Phi(\overline{t}u_0)=  \overline{t}\int^{\overline{\sigma}}_{0} \Phi'(\overline{t}(u_0+ \sigma v))v d\sigma <0,
	$$
	so that
	$
	\Phi(\overline{t}(u_0+ \overline{\sigma} \phi)) < \Phi(u_0) = c$,
	which is a contradiction. Therefore $\Phi'(u_0)=0$ and the proof is complete.\\
\end{proof}

Next we provide some conditions leading to $(HY)_d$ when $\Phi$ has a mountain-pass geometry in $Y$:

\begin{lem}\label{l1}
	Assume (H1) for every $u \in Y$, and the following conditions: 
	\begin{enumerate}
		\item There exists $\sigma>1$ such that $\displaystyle \lim_{t \to \infty} \frac{\Phi(tu)}{t^\sigma}=-\infty$ uniformly for $u$ on weakly compact subsets of $Y$.
		\item $\Phi\ge I_0-I$ in $Y$,
		where $I$ is weakly continuous and vanishes on $ \partial Y$, and $I_0$ satisfies $\dis \lim_{t \to \infty} I_0(tu)=\infty$ uniformly for $u \in \mathcal{S} \cap Y$.
	\end{enumerate}	
	If $(u_n) \subset \mathcal{N} \cap Y$ and $(\Phi(u_n))$ is bounded then $(u_n)$ is bounded. In particular, $(HY)_d$ holds for any $d \in \Re$ if we assume in addition that
	\begin{enumerate}
		\item [(3)] $J>0$ on $\partial Y \setminus \{0\}$ and in
		$Y \cap B(0,R)$, for some $R>0$. 
	\end{enumerate}
	
\end{lem}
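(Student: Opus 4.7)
\smallskip
\nd \textbf{Proof plan.} The approach is to prove boundedness by contradiction, and then derive $(HY)_d$ by ruling out, via (1)--(3), the two possibilities for the weak limit of a sequence in $\mathcal{N}\cap Y$: a nonzero point of $\partial Y$, or the origin.

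For the boundedness step, I suppose $(u_n)\subset \mathcal{N}\cap Y$ with $\Phi(u_n)$ bounded and $\|u_n\|\to\infty$. Writing $u_n=t_n v_n$ with $t_n=\|u_n\|$ and $v_n\in S\cap Y$, I extract a subsequence so that $v_n\rightharpoonup v\in \overline{Y}$. If $v\in Y$, then $K:=\{v_n\}\cup\{v\}$ is bounded and weakly closed, hence weakly compact in the reflexive space $X$, and contained in $Y$; condition~(1) gives $\Phi(tv')/t^\sigma\to -\infty$ uniformly for $v'\in K$ as $t\to\infty$, and evaluating at $t=t_n$ forces $\Phi(u_n)\to -\infty$, contradicting boundedness. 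If instead $v\in\partial Y$ (allowing $v=0$, and using that $\partial Y$ is itself a cone), I fix $s>0$; then $sv_n\rightharpoonup sv\in\partial Y$, so weak continuity of $I$ together with its vanishing on $\partial Y$ yields $I(sv_n)\to 0$, while (2) gives $I_0(sv_n)\ge M$ uniformly in $n$ for $s$ large enough. Since $sv_n\in Y$, (2) also gives $\Phi(sv_n)\ge I_0(sv_n)-I(sv_n)\ge M-1$ for $n$ large. On the other hand, by (H1) the number $t_n$ is the global maximizer of $\varphi_{v_n}$, so $\Phi(sv_n)\le \Phi(t_n v_n)=\Phi(u_n)$, which is bounded; letting $M$ be arbitrary yields a contradiction.

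For $(HY)_d$ under (3), Part~1 gives $(u_n)$ bounded, so a subsequence satisfies $u_n\rightharpoonup u\in\overline{Y}$; it remains to show $u\in Y$. If $u\in\partial Y\setminus\{0\}$, then $J(u)>0$ by (3), whereas weak lower semicontinuity of $J$ combined with $J(u_n)=0$ gives $J(u)\le 0$, a contradiction. If $u=0$, the second part of (3) rules out $u_n\in Y\cap B(0,R)$ (as $J(u_n)=0$), so $\|u_n\|\ge R$ for all $n$. Setting $v_n=u_n/\|u_n\|$ and passing to a subsequence with $t_n=\|u_n\|\to t_*\in[R,\infty)$, the relation $u_n=t_n v_n\rightharpoonup 0$ forces $v_n\rightharpoonup 0\in\partial Y$, and the Case~B argument of Part~1 applies verbatim to produce a contradiction.

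The main obstacle is precisely the case $u=0$ in the verification of $(HY)_d$: assumption (3) provides no control at the origin, so the quantitative content of (1)--(2) must be reimported by rescaling to the unit sphere and exploiting the global-maximum structure of (H1), which yields the sharp comparison $\Phi(sv_n)\le \Phi(u_n)$ at an arbitrary test scale $s$. Once this bridge is in place, all the remaining steps reduce to routine weak lower semicontinuity arguments.
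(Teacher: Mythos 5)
Your proof is correct and follows essentially the same route as the paper's: normalize to the unit sphere, use (H1) to compare $\Phi(u_n)=\varphi_{v_n}(t_n)$ with $\varphi_{v_n}(s)$ at an arbitrary scale $s$ together with (2) when the weak limit of $(v_n)$ lies on $\partial Y$, invoke (1) when it lies in $Y$, and then rule out the limit cases $u\in\partial Y\setminus\{0\}$ and $u=0$ via (3), weak lower semicontinuity of $J$, and a rerun of the boundary argument. The only difference is that you spell out details the paper leaves implicit, namely the weak compactness of $\{v_n\}\cup\{v\}$ and the rescaling step in the case $u=0$.
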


\begin{proof}
	If $(u_n) \subset \mathcal{N} \cap Y$ is unbounded then we may assume that $\|u_n\| \to \infty$ and $v_n \rightharpoonup v$ in $X$, where $v_n=\frac{u_n}{\|u_n\|}$. If $v \in \partial Y$ then, since $t_{u_n}=1$ for every $n$ and $I$ is weakly continuous and vanishes on $ \partial Y$, for every $t>0$ we have
	\begin{equation}
	\label{b1}\Phi(u_n)\geq \Phi(tv_n)\geq I_0(tv_n)-I(tv_n)\ge I_0(tv_n) - C.
	\end{equation}
	By (2) we have $I_0(tv_n) \to \infty$ uniformly as $t \to \infty$, and
	we obtain a contradiction with the boundedness of $\Phi(u_n)$. Hence $v  \in Y$ and consequently
	$$\frac{\Phi(u_n)}{\|u_n\|^{\sigma}}= \frac{\Phi(\|u_n\|v_n)}{\|u_n\|^{\sigma}} \rightarrow -\infty,$$
	which contradicts the fact that $\Phi(u_n)>0$ for every $n$.
	Therefore $(u_n)$ is bounded. Let us assume in addition (3). Up to a subsequence, we have $u_n \rightharpoonup u$ in $X$. Since $(u_n) \subset Y$ we have either $u \in Y$ or $u\in \partial Y$. From $J>0$ in $Y \cap B(0,R)$ we know that $(u_n)$ is bounded away from zero, so that repeating \eqref{b1} we infer that $u \neq 0$. The weak lower semicontinuity of $J$ yields
	$J(u) \leq \liminf J(u_n)=0$, and since $J>0$ on $\partial Y \setminus \{0\}$ we deduce that $u \in Y$, so that $(HY)_d$ is satisfied for any $d \in \Re$.
\end{proof}


The boundedness of sequences minimizing $\Phi$ over $\mathcal{N} \cap Y$ can sometimes be obtained by other conditions (e.g. the boundedness of $\mathcal{N} \cap Y$ or the coercivity of $\Phi$ over it). In such cases $(HY)_d$ might be easier to establish through the following condition:\\

\begin{itemize}
	\item [(HJ)]  If $(u_n)\subset Y$, $u_n\rightharpoonup u$ in $X$ and  $J(u_n)\to J(u)$ then $u_n \to u$ in $X$.\\
\end{itemize}

Let us note that (HJ) is satisfied if $J(u)=C\|u\|^{\theta}+H(u)$ where $C,\theta>0$ and $H$ is weakly continuous (which is the case in most of our applications).
\begin{lem}\label{l2}
$(HY)_d$ is satisfied under the following conditions:
	\begin{enumerate}
		\item Any sequence $(u_n) \subset \mathcal{N}\cap Y$ satisfying $\Phi(u_n)\to d$ is bounded.
		\item  $J>0$ on $\partial Y \setminus \{0\}$ and in
		$Y \cap B(0,R)$, for some $R>0$. 
		\item $(HJ)$ holds for $u=0$.
	\end{enumerate}
\end{lem}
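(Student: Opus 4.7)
The plan is to follow essentially the same scheme used at the end of the proof of Lemma \ref{l1}: extract a weak limit $u$ of $(u_n)$ and rule out the two bad cases, namely $u=0$ and $u\in\partial Y$. The conditions (1)--(3) are tailored exactly to make each of these exclusions work in a clean, self-contained way.

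First I would use condition (1) to get boundedness of the minimizing sequence $(u_n)\subset\mathcal N\cap Y$ with $\Phi(u_n)\to d$. Reflexivity of $X$ then yields a subsequence (not relabeled) with $u_n\rightharpoonup u$ in $X$. Since $(u_n)\subset Y$, the weak limit lies in $\overline{Y}$, so the remaining task is to exclude $u\in\partial Y$ (with $u\ne 0$) and $u=0$.

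To discard $u=0$, I would invoke (HJ) with limit $0$: since $u_n\in\mathcal N$ we have $J(u_n)=0$, and trivially $J(0)=\Phi'(0)\cdot 0=0$. The hypothesis $u_n\rightharpoonup 0$ together with $J(u_n)\to J(0)$ then forces $u_n\to 0$ strongly, so eventually $u_n\in Y\cap B(0,R)$. But by condition (2), $J>0$ on $Y\cap B(0,R)$, which contradicts $J(u_n)=0$. Hence $u\ne 0$. To rule out $u\in\partial Y\setminus\{0\}$, I would use the weak lower semicontinuity of $J$ assumed at the beginning of Section 2: $J(u)\le\liminf J(u_n)=0$, while condition (2) gives $J>0$ on $\partial Y\setminus\{0\}$, another contradiction. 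Therefore $u\in Y$, which is precisely $(HY)_d$.

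I do not anticipate a serious obstacle: the argument is a direct adaptation of the last paragraph of Lemma \ref{l1}, with condition (1) replacing the boundedness derived there from the mountain-pass hypotheses and (HJ) providing the strong convergence needed to reach the ball $B(0,R)$. The only subtle point to state carefully is that $J(0)=0$ and that weak convergence of $(u_n)\subset Y$ keeps the limit in $\overline{Y}$, so that the dichotomy $u\in Y$ or $u\in\partial Y$ exhausts all possibilities once $u=0$ has been excluded.
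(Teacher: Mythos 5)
Your proof is correct and uses the same ingredients in the same roles as the paper's: condition (1) for boundedness and weak convergence, weak lower semicontinuity of $J$ together with (2) to exclude $u\in\partial Y\setminus\{0\}$, and (HJ) at $0$ together with $J>0$ on $Y\cap B(0,R)$ to exclude $u=0$. The only difference is cosmetic — the paper first notes $\|u_n\|\ge R$ and funnels the bad case $u\in\partial Y$ down to $u=0$ before invoking (HJ), whereas you treat the two exclusions as parallel cases — so this is essentially the paper's argument.
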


\begin{proof}
	By our assumptions, we may assume that $u_n \rightharpoonup u$ in $X$ if $(u_n) \subset \mathcal{N}\cap Y$ and $\Phi(u_n)\to d$. Since $J$ is weakly lower semicontinuous, we have $J(u)\leq 0$. Note also that (2) implies that $\|u_n\| \geq R$ for every $n$. If $u \not \in Y$ then $u \in \partial Y$, and (2) yields that $u=0$, so that $J(u_n) \to J(u)$. By (3) we have $u_n \to 0$ in $X$, which is a contradiction.
\end{proof}

Next we consider the following behavior on $Y$:\\

\begin{itemize}
	\item [(H2)] $\varphi_u$ has a unique critical point $t_u>0$, which is a global minimizer (and consequently $\Phi(t_uu)=\varphi_u(t_u)<0$) \\
\end{itemize}

\begin{theorem}
	\label{tp1}
	Assume  (H2) for any $u \in Y$, and the following conditions:
	\begin{enumerate}
		\item  $\Phi$ is coercive in $Y$, i.e. $\Phi(u_n) \to \infty$ if $(u_n) \subset Y$ and $\|u_n\| \to \infty$.
		\item $\Phi \geq 0$ on $\partial Y$.
	\end{enumerate} 
	Then $c<0$ and $c=\inf_{Y} \Phi$, so that $c$ is a local minimum of $\Phi$. If, in addition, $\Phi \geq 0$ on $\mathcal{N} \setminus Y$, then $c$ is the ground state level of $\Phi$.
	
\end{theorem}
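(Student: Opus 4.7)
The plan is to reduce the constrained minimization over $\mathcal{N}\cap Y$ to the unconstrained minimization over the open set $Y$, and then extract a minimizer using the coercivity hypothesis together with weak lower semicontinuity.

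First, I would establish the identity $c=\inf_Y \Phi$ together with $c<0$. Fix any $u\in Y$; by (H2) the fiber $\varphi_u$ has a unique critical point $t_u$, which is a global minimizer on $(0,\infty)$ with $\varphi_u(t_u)<0$. In particular $t_u u\in\mathcal{N}\cap Y$ (using that $Y$ is a cone), so $\mathcal{N}\cap Y\neq\emptyset$ and $c\le\varphi_u(t_u)<0$. Moreover, for every $u\in Y$,
\[\Phi(u)=\varphi_u(1)\ge\varphi_u(t_u)=\Phi(t_u u)\ge c,\]
so $\inf_Y\Phi\ge c$; the reverse inequality is trivial since $\mathcal{N}\cap Y\subset Y$. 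Hence $c=\inf_Y\Phi<0$.

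Next I would show $c$ is achieved. Let $(u_n)\subset Y$ be a minimizing sequence for $\inf_Y\Phi$. By the coercivity hypothesis (1), $(u_n)$ is bounded, so passing to a subsequence we may assume $u_n\rightharpoonup u$ in the reflexive space $X$. Weak lower semicontinuity of $\Phi$ yields $\Phi(u)\le\liminf\Phi(u_n)=c<0$. The main obstacle is to exclude $u\in\partial Y$ (weak boundary): since $(u_n)\subset Y$ and $u_n\rightharpoonup u$, either $u\in Y$ or $u\in\overline Y\setminus Y=\partial Y$. In the latter case hypothesis (2) gives $\Phi(u)\ge 0$, contradicting $\Phi(u)\le c<0$. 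Therefore $u\in Y$ and $\Phi(u)=c$. Since $Y$ is norm-open and $u$ minimizes $\Phi$ over $Y$, $u$ is a local minimizer of $\Phi$ on $X$, so $\Phi'(u)=0$.

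Finally, for the last assertion, if $\Phi\ge 0$ on $\mathcal{N}\setminus Y$ then every $v\in\mathcal{N}\setminus Y$ satisfies $\Phi(v)\ge 0>c$, so $\inf_\mathcal{N}\Phi=\inf_{\mathcal{N}\cap Y}\Phi=c$, i.e.\ $c$ is the ground state level of $\Phi$. The only delicate point throughout is the correct interpretation of $\partial Y$ as the weak boundary, which is precisely what makes the weak-limit argument of step~2 compatible with hypothesis (2); everything else is a direct consequence of (H2), coercivity and weak lower semicontinuity.
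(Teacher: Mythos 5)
Your proof is correct and follows essentially the same route as the paper's: coercivity plus weak lower semicontinuity to extract a weak limit of a minimizing sequence, hypothesis (2) to rule out the limit landing on the weak boundary $\partial Y$, and the openness of $Y$ to conclude that the minimizer is a local minimum and hence a critical point. You simply spell out more explicitly the step the paper leaves implicit, namely that (H2) yields $c=\inf_Y\Phi<0$.
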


\begin{proof}
	By (1) we know that $\Phi$ is bounded from below on $Y$. Let $(u_n) \subset Y$ be such that
	$\Phi(u_n) \to \inf_{\overline{Y}} \Phi=\inf_Y \Phi<0$. From (1), (2) and the weak lower semicontinuity of $\Phi$ we can assume that $u_n \rightharpoonup u \in Y$. So $\Phi(u)=\inf_{Y} \Phi$ and since $Y$ is open, this infimum is a local minimum (and therefore a critical value) of $\Phi$. It follows that $c=\inf_{Y} \Phi$ and  $c$ is the ground state level of $\Phi$ if $\Phi \geq 0$ on $\mathcal{N} \setminus Y$.\\
\end{proof}

\begin{rmk}
From the previous proof it is clear that instead of assuming that $\Phi$ is coercive in $Y$ and satisfies (H2) for any $u \in Y$, it suffices to assume $(HY)_c$ and $\inf_Y \Phi<0$. Furthermore, $Y$ may be an open set in general.\\
\end{rmk}

\subsection{Cones where $\varphi_u$ has more than one critical point}\strut\\

We allow now $\varphi_u$ to have more than one critical point. In contrast with the previous susbsection, we shall require more regularity on $\varphi_u$ for $u \in Y$. First we assume the following condition in $Y$:\\

\begin{itemize}
	\item [(H3)] $\varphi_u \in C^2(0,\infty)$ has a non-degenerate minimum point $t_u>0$ such that $\varphi_u'<0$ in $(0,t_u)$. Moreover $\varphi_u(t_u)<\varphi_u(t)$ for any $t>0$ such that $\varphi_u'(t)=0$. In particular $\Phi(t_u u)=\varphi_u(t_u)<0$. \\
\end{itemize}

It is clear that $t_u$ in (H3) is the first nonzero critical point of $\varphi_u$. (H3) holds in particular if $\varphi_u$ has two critical points, the first one being a local minimum point. This condition arises in several problems, as we shall see in the next sections.
\begin{theorem}\label{tp2} Assume $(HY)_c$ and (H3), (HJ) for all $u\in Y$. 
	Then  $c<0$ and it is achieved.
	In addition:
	\begin{enumerate}
		\item  If $J$ is $C^1$ in $Y$ then $c$ is a local minimum of $\Phi$.  	
		\item  If $\Phi \ge 0$ on $\mathcal{N}\setminus Y$, then  $c$ is the ground state level of $\Phi$.
	\end{enumerate}

\end{theorem}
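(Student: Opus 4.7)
The plan is to adapt the minimization approach of Theorem~\ref{thm2} to the geometry of (H3), exploiting two features: $t_u$ is the strict minimum of $\varphi_u$ among its critical points, and $\varphi_u''(t_u)>0$. The negativity $c<0$ is immediate, since for any $u\in Y$ we have $t_u u\in\mathcal{N}\cap Y$ with $\Phi(t_u u)=\varphi_u(t_u)<0$, so $c\le\varphi_u(t_u)<0$.

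For the achievement, given a minimizing sequence $(u_n)\subset\mathcal{N}\cap Y$ I would first replace it with $\tilde u_n:=t_{u_n}u_n\in\mathcal{N}\cap Y$; since $1$ is a critical point of $\varphi_{u_n}$, the strict comparison in (H3) gives $\Phi(\tilde u_n)\le\Phi(u_n)\to c$, and a scaling computation yields $t_{\tilde u_n}=1$, so that each $\tilde u_n$ is the nondegenerate minimum of its own fibering map. Applying $(HY)_c$ I then extract $\tilde u_n\rightharpoonup u\in Y$ along a subsequence, which yields $\Phi(u)\le c$ by weak lower semicontinuity.

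The main obstacle is to prove $J(u)=0$: once this is shown, $J(\tilde u_n)=0\to J(u)$ together with $(HJ)$ forces strong convergence $\tilde u_n\to u$, and hence $\Phi(u)=c$. To establish it, I would use the weak lower semicontinuity of $v\mapsto J(tv)$ for each $t>0$ to deduce $\varphi_u'(t)\le\liminf\varphi_{\tilde u_n}'(t)\le 0$ for every $t\in(0,1]$, since $t_{\tilde u_n}=1$ and $\varphi_{\tilde u_n}'<0$ on $(0,1)$. Assuming $\varphi_u'(1)<0$ for contradiction, analyze the three possible positions of $t_u$: the case $t_u=1$ forces $\varphi_u'(1)=0$, a contradiction; the case $t_u<1$ contradicts $\varphi_u'\le 0$ on $(0,1]$ via nondegeneracy, which yields $\varphi_u'>0$ just to the right of $t_u$; the case $t_u>1$ gives $\varphi_u$ strictly decreasing on $(0,t_u)$, so $\Phi(u)=\varphi_u(1)>\varphi_u(t_u)=\Phi(t_u u)\ge c$, clashing with $\Phi(u)\le c$. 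Hence $J(u)=0$ and the minimum is attained at $u$.

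For part (1), $J\in C^1$ together with $J'(u)u=\varphi_u''(1)>0$ lets the implicit function theorem applied to the equation $\varphi_v'(s)=0$ produce a $C^1$ map $v\mapsto t_v$ near $u$ with $t_u=1$ and $t_v v\in\mathcal{N}\cap Y$. For $v$ close enough to $u$, the $C^1$-closeness of $\varphi_v$ to $\varphi_u$ on a compact $t$-interval places $1$ in the basin of monotonicity of the nondegenerate minimum $t_v$, so $\Phi(v)=\varphi_v(1)\ge\varphi_v(t_v)=\Phi(t_v v)\ge c$; thus $c$ is a local minimum value of $\Phi$ and, in particular, $u$ is a critical point of $\Phi$. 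Finally, part (2) is immediate: $c<0\le\Phi$ on $\mathcal{N}\setminus Y$ gives $\inf_{\mathcal{N}}\Phi=c$, so $c$ is the ground state level.
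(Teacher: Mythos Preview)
Your proposal is correct and takes a genuinely different route to the achievement of $c$ than the paper does. The paper argues by contradiction on strong convergence: if $u_n\not\to u$, it applies (HJ) to the rescaled sequence $t_u u_n$ to produce $\varphi'_{u_n}(t_u)>0$ along a subsequence, forces $t_u>1$, and then derives $\Phi(t_uu)<\Phi(u)\le c$. Your path is more direct: after the replacement $\tilde u_n=t_{u_n}u_n$ (a useful normalization the paper essentially needs but does not spell out---it writes ``$t_{u_n}=1$'' for a general minimizing sequence), you use only the weak lower semicontinuity of $J$ to get $\varphi_u'\le 0$ on $(0,1]$, and then a clean trichotomy on the position of $t_u$ yields $t_u=1$ and hence $J(u)=0$; only then do you invoke (HJ), now in its simplest form $J(\tilde u_n)=0\to J(u)$, to upgrade to strong convergence. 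This separation of ``identify the limit'' from ``prove strong convergence'' is a pleasant feature of your argument.

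Two small remarks. First, the framing ``assume $\varphi_u'(1)<0$ for contradiction'' is superfluous: the cases $t_u<1$ and $t_u>1$ each give a contradiction without that hypothesis, so your trichotomy in fact proves $t_u=1$ outright---and you need exactly this (not just $J(u)=0$) to justify $\varphi_u''(1)>0$ in part~(1). Second, your ``basin of monotonicity'' sketch for part~(1) is correct in spirit but imprecise; the paper carries it out via the implicit function theorem applied to $F(v,t)=\varphi_v'(t)$ followed by a second-order Taylor expansion $\varphi_w(1)-\varphi_w(t_w)=\tfrac12\varphi_w''(\theta)(t_w-1)^2$, with positivity of $\varphi_w''(\theta)=F_t(w,\theta)$ guaranteed on a small ball by continuity (this is where the hypothesis $J\in C^1$ enters).
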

\begin{proof} By (H3) we know that $\mathcal{N}\cap Y$ is nonempty and $c<0$.
	Let $(u_n)\subset \mathcal{N}\cap Y$ satisfy $\Phi(u_n)\to c$. By $(HY)_c$ we may assume that $u_n\rightharpoonup u\in Y$, so there exists $t_u>0$ such that $J(t_uu)=0$. We claim that $u_n\to u$ in $X$. Indeed, on the contrary, we conclude from (HJ) that $0=J(t_uu)<\limsup J(t_uu_n)$,
	so that for $n$ large enough we may assume that $J(t_uu_n)>0$ i.e. $\varphi'_{u_n}(t_u)>0$. Thus (H3) yields that $t_u>t_{u_n}=1$, which implies that $\Phi(t_uu)<\Phi(u)\le \liminf \Phi(u_n)=c$,
	a contradiction. Hence $u_n\to u$ in $X$ and $\varphi_u'(1)=0$.  If $t_u \neq 1$ then from (H3) we deduce that $\Phi(t_uu)<\Phi(u)=c$, which is a contradiction. Therefore $t_u=1$ and $\Phi(u)=c<0$. 
	
	From now on we assume that $u$ is the minimizer of the last paragraph and $J$ is $C^1$ over $Y$. Let $F:Y\times (0,\infty)\to \mathbb{R}$ be given by $F(v,t):=\varphi_v'(t)=J(tv)/t$. It follows that $F\in C^1$, $F(u,1)=0$ and $F_t(u,1)>0$. By the implicit function theorem, there exists an open ball $B$ containing $u$ and a unique $C^1$ map $\sigma: B \to (0,\infty)$ such that $\sigma(v)=t_v$ and $F(v,\sigma(v))=0$ for any $v \in B$. Moreover, since $F_t(u,1)>0$ and $F_t$ is continuous, making $B$ smaller if necessary, we have $F_t(v,1)>0$ for $v \in B$.

	We claim that there exists $R>0$ such that $t_ww\in B$ for any $w\in B(u,R)$. Otherwise, there exists a sequence $R_n\to 0^+$ and $w_n\in B(u,R_n)$ such that $t_{w_n}w_n\notin B$. Since $w_n\to u$, by the previous paragraph we deduce that $t_{w_n}\to 1$ and hence $t_{w_n}w_n\to u$, a contradiction. Making $R>0$ smaller if necessary, we have $B(u,R) \subset B$.
	It follows that for any $w\in B(u,R)$ the line segment between $w$ and $t_w w$ lies inside $B$.
	Thus, if $t_w\neq 1$ and $w \in B(u,R)$ then there exists $\theta\in(\min\{1,t_w\},\max\{1,t_w\})$ such that
	\begin{equation*}	 
	\varphi_w(1)-\varphi_{w}(t_w)=\varphi_w'(t_w)(t_w-1)+\frac{1}{2}\varphi_w''(\theta)(t_w-1)^2=\frac{1}{2}F_t(w,\theta)(t_w-1)^2,
	\end{equation*}
	and from $F_t(w,\theta)=\theta^{-2}F_t(\theta w,1)>0$, we conclude that $\Phi(t_ww)\le\Phi(w)$, so that
	$
	\Phi(u)\le \Phi(t_{w}w)\le  \Phi(w)$.
	Since this inequality holds for each $w\in B(u,R)$
	we see that $u$ is a local minimizer of $\Phi$, and consequently a critical point of $\Phi$. Finally, (2) follows from $c<0$. 
\end{proof}

\begin{rmk}\label{N0empty}
	{\rm It is easily seen that Theorem \ref{tp2} still holds if the condition $\Phi\ge 0$ on $ \mathcal{N}\setminus Y$ is relaxed to $c<\displaystyle \inf_{\mathcal{N}\setminus Y}\Phi$.
	}
\end{rmk}

Lastly, we show that the following condition leads to the existence of a critical point (generally not a ground state):\\

\begin{itemize}
	\item [(H4)] $\varphi_u \in C^2(0,\infty)$ has a non-degenerate maximum point $s_u>0$ such that $\varphi_u'<0$ in $(s_u,\infty)$ and $\varphi_u(s_u)\geq \varphi_u(t)$ for any $t>0$ such that $\varphi_u'(t)=0$. \\
\end{itemize}

Note that $s_u$ in (H4) is the last critical point of $\varphi_u$ and satisfies $\varphi_u(s_u)\geq \varphi_u(t)$ whenever $\varphi_u'(t)>0$.
We stress that it might happen that $\Phi(s_u u)=\varphi_u(s_u)<0$. In particular, $s_u$ does not need to be a global maximum point of $\varphi_u$.

\begin{theorem}\label{tp3} Assume (H4),(HJ) for all $u\in Y$ and $(HY)_d$ with $d:=\inf\{\Phi (s_uu):\ u\in Y\} $. If $J$ is $C^1$ in $Y$ then $d$ is achieved by a critical point of $\Phi$.
\end{theorem}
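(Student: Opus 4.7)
The strategy mirrors the proof of Theorem~\ref{tp2}: minimize $\Phi$ over a distinguished slice of $\mathcal{N}\cap Y$, extract a weak limit, and upgrade to a critical point via the implicit function theorem. A direct verification shows that $M:=\{s_u u:u\in Y\}$ coincides with $\{v\in Y:s_v=1\}$ and is contained in $Y\cap \mathcal{N}$, so that $d=\inf_M \Phi$. I would take a minimizing sequence $(w_n)\subset M$ (so $s_{w_n}=1$) with $\Phi(w_n)\to d$; by $(HY)_d$ one may assume $w_n\rightharpoonup w\in Y$.

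The central claim is that $w^*:=s_w w\in M$ realizes $d$. The bound $\Phi(w^*)\ge d$ is automatic from $w^*\in M$. For $\Phi(w^*)\le d$, weak lower semicontinuity of $\Phi$ reduces matters to $\limsup\varphi_{w_n}(s_w)\le d$. If $s_w\ge 1$, this is immediate since $\varphi_{w_n}'<0$ on $(1,\infty)$ and $\varphi_{w_n}(1)=\Phi(w_n)\to d$. Suppose $s_w<1$. If $w_n\to w$ strongly then, since $J$ is $C^1$, we get $\varphi_w'(1)=\lim J(w_n)=0$, making $1$ a critical point of $\varphi_w$; but $\varphi_w'<0$ on $(s_w,\infty)$ by (H4), forcing $1\le s_w$, a contradiction. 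Hence $w_n\not\to w$, and by (HJ) together with the weak lower semicontinuity of $J$ at $s_w w$, we have $\limsup J(s_w w_n)>J(s_w w)=0$. Along a subsequence $\varphi_{w_n}'(s_w)>0$; since $\varphi_{w_n}'(1)=0$, there is a first critical point $t^*\in(s_w,1]$ of $\varphi_{w_n}$, at which (H4) yields $\varphi_{w_n}(t^*)\le\varphi_{w_n}(1)=\Phi(w_n)$, while strict monotonicity on $[s_w,t^*]$ gives $\varphi_{w_n}(s_w)\le\varphi_{w_n}(t^*)\le\Phi(w_n)$. In either subcase $\Phi(w^*)=d$.

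To show $w^*$ is a critical point of $\Phi$, I would adapt the implicit function argument of Theorem~\ref{tp2}. The non-degeneracy $\varphi_{w^*}''(1)<0$ from (H4) and the $C^1$ regularity of $J$ produce an open ball $B\ni w^*$ and a $C^1$ map $\sigma:B\to(0,\infty)$ with $\sigma(v)=s_v$; shrinking $B$ one has $\sigma(v)v\in M$, so that $\Psi(v):=\Phi(\sigma(v)v)\ge d=\Psi(w^*)$ on $B$. A direct computation exploiting $J(w^*)=0$ gives $\Psi'(w^*)=\Phi'(w^*)$, whence the local minimality of $w^*$ for $\Psi$ forces $\Phi'(w^*)=0$.

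The principal obstacle is the subcase $s_w<1$ in the central claim: (H4) only asserts that $s_{w_n}=1$ has the highest \emph{critical} value of $\varphi_{w_n}$, not that it is the global supremum of $\varphi_{w_n}$ on $(0,\infty)$, so one cannot directly bound $\varphi_{w_n}(s_w)$ by $\Phi(w_n)$. The remedy combines (HJ) with (H4) to locate an intermediate critical point $t^*\in(s_w,1]$ whose value simultaneously dominates $\varphi_{w_n}(s_w)$ (by monotonicity) and is dominated by $\Phi(w_n)$ (by the highest-critical-value clause).
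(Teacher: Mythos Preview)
Your argument is correct and follows the paper's route: a minimizing sequence on $\{s_uu:u\in Y\}$, weak compactness via $(HY)_d$, the (HJ)/(H4) comparison to show that $s_ww$ achieves $d$, and the implicit function theorem for criticality. The only cosmetic differences are that the paper splits into strong versus non-strong convergence (using the IFT to force $s_v=1$ in the strong case) rather than your $s_w\ge1$/$s_w<1$ dichotomy, and concludes criticality via Lagrange multipliers on the $C^1$ manifold $\{u\in Y:J(u)=0,\ J'(u)u<0\}$ rather than your equivalent computation $\Psi'(w^*)=\Phi'(w^*)$; note also that weak lower semicontinuity reduces matters to $\liminf\varphi_{w_n}(s_w)\le d$ (not $\limsup$), which is precisely what your subsequence argument delivers.
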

\begin{proof} By (H4) we know that $\{s_uu:\ u\in Y\}$ is nonempty.
	Let $(u_n)\subset Y$ satisfy $\Phi(s_{u_n}u_n)\to d$ and set $v_n:=s_{u_n}u_n$. By $(HY)_d$ we may assume that  $v_n \rightharpoonup v\in Y$, so there exists $s_v>0$ such that $J(s_vv)=0$. We claim that $s_v v$ achieves $d$. 
	
	Let us first assume that $v_n \not \to v$ in $X$.  From (HJ) we deduce that $0=J(s_vv)<\limsup J(s_vv_n)$,
	so that for $n$ large  we must have  $J(s_vv_n)>0$ i.e. $\varphi'_{v_n}(s_v)>0$. By (H4) we must have $s_v<1$, which implies that $\Phi(s_vv)\leq \liminf \Phi(s_vv_n) \le \liminf\Phi(v_n)=d$, i.e. $\Phi(s_v v)=d$.
	
	Assume now that $v_n\to v$ in $X$, so that $v \in \mathcal{N}$. 
	Let us prove that $s_v=1$, which implies $\Phi(v)=d$.  To this end, note that $F(v,s_v)=\varphi_v'(s_v)=0$ and $F_t(v,s_v)=\varphi_v''(s_v)<0$, where $F$ is as in the proof of Theorem \ref{tp2}. By the implicit function theorem, there exists an open ball $B$ containing $v$ and a unique $C^1$ map $\sigma: B \to (0,\infty)$ such that $\sigma(u)=s_u$ and $F(u,\sigma(u))=0$ for any $u \in B$. Since $v_n\to v$ we have $s_v=\sigma(v)=\lim \sigma(v_n)=\lim s_{v_n}=1$, and the claim is proved.
	
	Furthermore, the previous discussion also shows that the set $\{s_uu:\ u\in Y\}=\{u \in Y: J(u)=0,\ J'(u)u<0\}$ is, in fact, a $C^1$ manifold. Since $J'(u)$ is surjective, we conclude from the Lagrange multipliers rule that $ \Phi'(v)=\alpha J'(v)$ for some $\alpha\in \mathbb{R}$.
	From $\Phi'(v)v=0>J'(v)v$, it follows that $\alpha=0$ and the proof is complete.
\end{proof}


\medskip

\section{Some classes of functionals and examples}

\medskip

Let us apply the results of the previous section to two classes of functionals. Recall that $X$ is a reflexive Banach space, $Y \subset X \setminus \{0\}$ is a nonempty open cone, and $c:=\displaystyle \inf_{\mathcal{N}\cap Y}\Phi$.
First we deal with $$\Phi=I_0-I,$$ where $I_0,I \in C^1(X,\Re)$  and $I_0(0)=I(0)=0$. The following result, which is a consequence of Theorem \ref{thm2} and Lemma \ref{l1}, extends \cite[Theorem 13]{SW} and \cite[Theorem 1.2]{FRQ}, as far as the existence of a ground state is concerned. Let us stress that $I_0$ may be nonhomogeneous (and this will be the case in several applications). 
\begin{cor}\label{c1}
Under the above conditions, assume in addition that:
	\begin{enumerate}
		\item $I_0$ and $u \mapsto I_0'(u)u$ are weakly lower semicontinuous and $I'$ is completely continuous, i.e. $I'(u_n) \to I'(u)$ in $X^*$ if $u_n \rightharpoonup u$ in $X$.
		\item There exist $C>0$ and $\eta>1$ such that
		$I_0'(u)u\geq C\|u\|^\eta$ for every $u \in \overline{Y}$
		and $I'(u)=o(\|u\|^{\eta-1})$ as $u \to 0$ in $\overline{Y}$.
		\item  $I(u)=I'(u)u=0$ for every $u \in \partial Y$.
		\item There exists $\sigma>1$ such that $t \mapsto \frac{I_0'(tu)u}{t^{\sigma-1}}$ and $t \mapsto \frac{I'(tu)u}{t^{\sigma-1}}$ are nonincreasing and increasing in $(0,\infty)$, respect., for every $u \in Y$. Moreover, $\displaystyle \lim_{t \to \infty} \frac{I_0(tu)}{t^{\sigma}}<\infty=\displaystyle \lim_{t \to \infty} \frac{I(tu)}{t^{\sigma}}$ uniformly for $u$ on weakly compact subsets of $Y$.
	\end{enumerate}
	Then $c$  is positive and achieved by a critical point of $\Phi$. If, in addition,
	$I'(u)u \leq 0<I_0'(u)u$ for $u \in X \setminus Y$ with $u\neq 0$  then  $c$ is the ground state level of $\Phi$.
\end{cor}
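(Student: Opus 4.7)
The proof is announced as a consequence of Theorem \ref{thm2} and Lemma \ref{l1}, so the plan is to verify the hypotheses of each and then invoke them. First I would check the blanket weak lower semicontinuity requirement of Section~2: $I_0$ and $u\mapsto I_0'(u)u$ are weakly lower semicontinuous by (1), and the complete continuity of $I'$ (together with $u_n\rightharpoonup u$ bounded in $X$) implies that both $I$ and $u\mapsto I'(u)u$ are weakly continuous on $X$. Hence $\Phi=I_0-I$ and $J(u)=I_0'(u)u-I'(u)u$ are weakly lower semicontinuous.

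Next, for every $u\in Y$ I would establish (H1). By (4) the map $t\mapsto \varphi_u'(t)/t^{\sigma-1}=I_0'(tu)u/t^{\sigma-1}-I'(tu)u/t^{\sigma-1}$ is strictly decreasing on $(0,\infty)$, so $\varphi_u'$ has at most one zero. Integrating the lower bound in (2) gives $I_0'(tu)u\ge Ct^{\eta-1}\|u\|^\eta$, while the smallness condition in (2) yields $|I'(tu)u|\le\|I'(tu)\|\,\|u\|=o(t^{\eta-1})$ as $t\to 0^+$; hence $\varphi_u'(t)>0$ for small $t>0$. Combining the two limits in (4) gives $\Phi(tu)/t^\sigma\to -\infty$, so $\varphi_u\to-\infty$ at infinity and $\varphi_u'$ must eventually be negative. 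The unique zero $t_u$ is therefore a strict global maximum with $\varphi_u(t_u)>0$, which is (H1).

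Then I would verify the hypotheses of Lemma \ref{l1}. Item (1) of the lemma is the uniform version of the limit in (4). For item (2), $I$ is weakly continuous (above) and vanishes on $\partial Y$ by (3); integrating $I_0'(tu)u\ge Ct^{\eta-1}\|u\|^\eta$ on $[0,t]$ yields $I_0(tu)\ge \frac{C}{\eta}t^{\eta}\|u\|^{\eta}$, which tends to $\infty$ uniformly for $u\in\mathcal{S}\cap Y$. For item (3), on $\partial Y\setminus\{0\}$ we have $I'(u)u=0$ by (3), whence $J(u)=I_0'(u)u\ge C\|u\|^\eta>0$; and in $Y\cap B(0,R)$ with $R$ small enough, $J(u)\ge C\|u\|^\eta-o(\|u\|^\eta)>0$ by (2). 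Lemma \ref{l1} then gives $(HY)_d$ for every $d\in\mathbb{R}$, in particular $(HY)_c$, and Theorem \ref{thm2} delivers that $c>0$ is achieved by a critical point of $\Phi$.

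For the ground state statement, assume $I'(u)u\le 0<I_0'(u)u$ for every nonzero $u\in X\setminus Y$. Then $J(u)=I_0'(u)u-I'(u)u>0$ on $(X\setminus Y)\setminus\{0\}$, so $\mathcal{N}\subset Y$ and the second assertion of Theorem \ref{thm2} gives $c=\inf_{\mathcal{N}}\Phi$. The hard part is less conceptual than clerical: one must carefully pair the monotonicity in (4) with the lower bound in (2) to simultaneously obtain the fibering-map geometry (H1), the coercive `energy escape' $\Phi(tu)/t^\sigma\to -\infty$ uniformly on weakly compact subsets of $Y$, and the positivity of $J$ near $0$ and on $\partial Y\setminus\{0\}$—with each of these three pieces wired into a different hypothesis of Lemma \ref{l1} or Theorem \ref{thm2}.
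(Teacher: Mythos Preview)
Your proposal is correct and follows essentially the same route as the paper: verify weak lower semicontinuity of $\Phi$ and $J$ via (1), derive (H1) from (2) and (4), check the three hypotheses of Lemma~\ref{l1} to obtain $(HY)_c$, and then invoke Theorem~\ref{thm2}, with the final ground state clause following from $J>0$ on $(X\setminus Y)\setminus\{0\}$. The only minor differences are stylistic: you argue that $\varphi_u'(t)>0$ for small $t$, while the paper integrates (2) to get $\varphi_u(t)>0$ for small $t$, and you spell out the verification of hypothesis (2) of Lemma~\ref{l1} (the uniform coercivity of $I_0(tu)$ on $\mathcal{S}\cap Y$) more explicitly than the paper does.
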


\begin{proof}
	First note that since $I'$ is completely continuous, the maps $I$ and $u \mapsto I'(u)u$ are weakly continuous, so that by (1)  we have that $\Phi$ and $J$ are weakly lower semicontinuous.
	We also see that (2) implies that $I_0(u)\geq Cr^{-1}\|u\|^{\eta}$ for any $u \in \overline{Y}$ and $I(u)=o(\|u\|^{\eta})$ as $u \to 0$ in $\overline{Y}$. It follows that for any $u\in Y$ we have $\varphi_u$ positive near zero. By (4) we have $\displaystyle \lim_{t \to \infty} \frac{\Phi(tu)}{t^\sigma}=-\infty$ uniformly for $u$ on weakly compact subsets of $Y$, and $t \mapsto \Phi'(tu)u$ vanishes at most once, so (H1) holds for $u \in Y$. Note also that (2) and (3) yield that $J>0$ on $\partial Y \setminus \{0\}$ and in
	$Y \cap B(0,R)$, for some $R>0$.
   From Lemma \ref{l1} we infer that $(HY)_c$ holds. Finally, if $I'(u)u \leq 0$ for $u \in X \setminus Y$ then $J(u)=I_0'(u)u-I'(u)u>0$ for every $u \in X \setminus Y$ with $u \neq 0$. Theorem \ref{thm2} yields then the conclusion.
\end{proof}

\begin{rmk}\label{rc1}
Instead of $I'(u)=o(\|u\|^{\eta-1})$ as $u \to 0$ in $\overline{Y}$, one may require more generally that for some $C,R>0$ we have $J(u)\geq C\|u\|^{\eta}$  for $u \in Y \cap B(0,R)$. Indeed, in such case $\varphi_u$ is still positive near zero and the rest of the proof remains valid. See Corollaries \ref{c2} and \ref{c3} for  examples where this condition occurs. 
\end{rmk}

Let us consider now the functional
\begin{equation}
\label{fcc}
\Phi(u):=\frac{1}{\kappa}P(u) +\frac{1}{\upsilon}\Upsilon(u)+\frac{1}{\gamma}\Gamma(u)
\end{equation}
defined in $X$.
We assume that  $0<\upsilon<\kappa<\gamma$ and $P,\Upsilon,\Gamma \in C^1(X,\Re)$ are weakly lower semicontinuous and $\kappa$-homogeneous, $\upsilon$-homogeneous and $\gamma$-homogeneous, respectively. Moreover there exists $C>0$ such that $ P(u)\geq C^{-1}\|u\|^{\kappa}$, $|\Upsilon(u)| \leq C\|u\|^\upsilon$ and $|\Gamma(u)|\leq C\|u\|^{\gamma}$ for all $u \in X$.

Note that by homogeneity we have
$J=P+\Upsilon+\Gamma$,
which is assumed to satisfy condition (HJ).
We deal with the cones $$Y_1=\left\{u \in X: \Upsilon(u)<0\right\} \quad
\mbox{and}
\quad Y_2=\left\{u \in X: \Gamma(u)<0\right\},$$
under the following condition:\\

\begin{enumerate}
	\item [(H5)] For any $u \in Y_1 \cap Y_2$ the map $\varphi_u$ has exactly two critical points $0<t_u<s_u$, both non-degenerate, with $t_u$ a local minimum point, and $s_u$ a local maximum point.\\
\end{enumerate}

This condition enables us to apply Theorems \ref{tp2} and \ref{tp3} on $Y_1$ and $Y_2$, respectively, and derive the following result:

\begin{cor}\label{cc}
Under the above conditions, assume that $Y_1$ is non-empty. Then $c:=\inf_{\mathcal{N}\cap Y_1} \Phi<0$ is the ground state level, and it is achieved by a local minimizer of $\Phi$. If, in addition, $Y_2$ is nonempty then $\Phi$ has a second critical point.
\end{cor}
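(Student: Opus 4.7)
The plan is to derive the two conclusions from Theorems~\ref{tp2} and \ref{tp3}, applied with $Y=Y_1$ and $Y=Y_2$ respectively. The algebraic backbone is the identity on $\mathcal{N}$ obtained by solving $J=P+\Upsilon+\Gamma=0$ for $\Gamma$:
\[
\Phi = \left(\tfrac{1}{\kappa}-\tfrac{1}{\gamma}\right) P + \left(\tfrac{1}{\upsilon}-\tfrac{1}{\gamma}\right) \Upsilon \quad \text{on } \mathcal{N},
\]
both coefficients being strictly positive since $\upsilon<\kappa<\gamma$.

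For the ground state assertion I would verify, in order, (H3) on $Y_1$, the sign condition $\Phi\ge 0$ on $\mathcal{N}\setminus Y_1$, and $(HY_1)_c$, and then invoke Theorem~\ref{tp2}. (H3) is immediate from (H5) on $Y_1\cap Y_2$; on $Y_1\setminus Y_2$ the bracket $\Upsilon(u)+P(u)\,t^{\kappa-\upsilon}+\Gamma(u)\,t^{\gamma-\upsilon}$ inside $\varphi_u'(t)=t^{\upsilon-1}[\cdot]$ is strictly increasing from $\Upsilon(u)<0$ to $+\infty$ (since $\Gamma(u)\ge 0$), yielding a unique non-degenerate minimum. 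The sign condition is read off the displayed identity since $\Upsilon\ge 0$ on $\mathcal{N}\setminus Y_1$. For $(HY_1)_c$, given $(u_n)\subset\mathcal{N}\cap Y_1$ with $\Phi(u_n)\to c$, the identity combined with $P(u)\ge C^{-1}\|u\|^\kappa$, $|\Upsilon(u)|\le C\|u\|^\upsilon$ and $\kappa>\upsilon$ forces boundedness; extracting a subsequence with $u_n\rightharpoonup u$, $P(u_n)\to P_\infty\ge 0$, $\Upsilon(u_n)\to\Upsilon_\infty$, the limiting identity gives $(\tfrac{1}{\upsilon}-\tfrac{1}{\gamma})\Upsilon_\infty\le c<0$, so $\Upsilon_\infty<0$, and weak lower semicontinuity of $\Upsilon$ then yields $\Upsilon(u)<0$, i.e. $u\in Y_1$.

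For the second critical point I would apply Theorem~\ref{tp3} with $Y=Y_2$. Condition (H4) is checked in parallel fashion: on $Y_1\cap Y_2$ it is (H5); on $Y_2\setminus Y_1$ the same bracket analysis, now starting at $\Upsilon(u)\ge 0$ and ending at $-\infty$ since $\Gamma(u)<0$, produces a unique non-degenerate local maximum $s_u$. Boundedness of any minimizing sequence for $d:=\inf\{\Phi(s_u u):u\in Y_2\}$ again follows from the identity. Once Theorem~\ref{tp3} produces a critical point $u_2$ achieving $d$, its distinctness from the local minimizer $u_1$ of the first part is immediate: on $\mathcal{N}$ one has $\varphi_u''(1)=J'(u)u$ (since $\Phi'(u)u=0$ there), and this quantity is strictly positive at the min-type $u_1$ but strictly negative at the max-type $u_2$, so the two points cannot coincide.

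The step I expect to be most delicate is $(HY_2)_d$: the trick that forced $\Upsilon_\infty<0$ in $(HY_1)_c$ used the strict negativity of $c$, but $d$ need not be negative, so the identity alone does not rule out $\Gamma_\infty=0$ for the weak limit. I would resolve this by pairing the boundedness estimate with (HJ), which upgrades weak convergence to strong as soon as $J(u_n)\to J(u)$, and then use the homogeneous structure of $\varphi_u$ to exclude the degenerate configuration $\Gamma(u)=0$—a point on $\partial Y_2$ whose fiber map carries at most one critical point, which is incompatible with being the limit of max-type critical points on the ``inner'' Nehari submanifold.
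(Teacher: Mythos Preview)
Your verification of (H3) on $Y_1$, (H4) on $Y_2$, the sign condition on $\mathcal{N}\setminus Y_1$, and your handling of $(HY_1)_c$ are all correct; your argument for $(HY_1)_c$ is in fact a bit cleaner than the paper's, since you read $\Upsilon_\infty<0$ straight off the Nehari identity and $P_\infty\ge 0$, then invoke weak lower semicontinuity of $\Upsilon$.

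The genuine gap is in $(HY_2)_d$. Your proposed resolution via (HJ) does not get off the ground: (HJ) requires $J(v_n)\to J(v)$, but you only know $J(v_n)=0$ and, by weak lower semicontinuity, $J(v)\le 0$. You have no mechanism to force $J(v)=0$, so (HJ) cannot be invoked. The subsequent appeal to ``incompatibility with being the limit of max-type points'' is not an argument either: weak limits of points on the inner Nehari set can perfectly well land on $\partial Y_2$ unless you prove otherwise.

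What you are missing is the second-derivative combination that eliminates $\Upsilon$. Since $v_n=s_{u_n}u_n$ is a non-degenerate maximum of its fiber, $\varphi_{v_n}'(1)=0$ and $\varphi_{v_n}''(1)<0$; hence
\[
\varphi_{v_n}''(1)-(\upsilon-1)\varphi_{v_n}'(1)=(\kappa-\upsilon)P(v_n)+(\gamma-\upsilon)\Gamma(v_n)<0,
\]
so that
\[
C^{-1}\|v_n\|^{\kappa}\le P(v_n)<-\tfrac{\gamma-\upsilon}{\kappa-\upsilon}\,\Gamma(v_n)\le C\|v_n\|^{\gamma}.
\]
The outer inequalities give $\|v_n\|\ge C_0>0$; feeding this back gives $-\Gamma(v_n)\ge C_1>0$ uniformly, and then weak lower semicontinuity of $\Gamma$ yields $\Gamma(v)\le -C_1<0$, i.e.\ $v\in Y_2$. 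This is exactly the step the paper uses; once you have it, Theorem~\ref{tp3} applies directly and the rest of your outline (including the distinctness via the sign of $\varphi''_u(1)$) goes through.
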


\begin{proof}
	First of all, note that by (3) we have
	\begin{equation}\label{coercibe}
	\Phi(u)=\frac{\gamma-\kappa}{\kappa\gamma}P(u)+\frac{\gamma-\upsilon}{\upsilon\gamma}\Upsilon(u) 
	\ge C_1\|u\|^\kappa-C_2\|u\|^\upsilon \quad \forall u \in \mathcal{N},
	\end{equation}
	where $C_1,C_2$ are positive constants. Therefore $\Phi$ is coercive over $\mathcal{N}$.
	
	Let us
	show that Theorem \ref{tp2} provides the desired assertions on $c$.  We analyze $\varphi_u$ for  $u \in Y_1$. If $\Gamma(u)\ge 0$, then
	\begin{enumerate}
		\item[i)] $\varphi_u$ has a unique critical point $t_u>0$, which is a non-degenerate global minimizer, 
	\end{enumerate}
	whereas if $\Gamma(u)< 0$, then $u \in Y_1 \cap Y_2$, and by (H5) 
	\begin{enumerate}
		\item[ii)] $\varphi_u$ has exactly two  critical points (both non-degenerate), the first one being a local minimum point $t_u>0$.
	\end{enumerate}
	Moreover, in both cases $\varphi_u(t_u)=\Phi(t_u u)<0$, and we deduce that $(H3)$ holds for $u \in Y_1$. To prove $(HY)_{c}$ note that, by \eqref{coercibe}  we can find a sequence $(u_n) \subset \mathcal{N}\cap Y_1$ such that $u_n\rightharpoonup u$ in $X$ and $\Phi(u_n) \to c$. Since $c<0$, we conclude that $u\neq 0$, and from the inequality $-(\gamma-1)\varphi_{u_n}'(1)+\varphi_{u_n}''(1)>0$ it follows that 
	$$P(u)+\frac{\gamma-\upsilon}{\gamma-\kappa}\Upsilon(u)\leq \liminf \left( P(u_n)+\frac{\gamma-\upsilon}{\gamma-\kappa}\Upsilon(u_n)\right)\leq 0,$$
	Since $u \neq 0$ we infer that $P(u)>0$ and thus
	$\Upsilon(u)<0$ i.e. $u\in Y_1$. Thus $(HY)_c$ holds.
	Finally, $J$ is clearly $C^1$ and by \eqref{coercibe} we have $\Phi>0$ in $\mathcal{N} \setminus Y_1$.
	
	Next we show that Theorem \ref{tp3} applies on $Y_2$. Given $u \in Y_2$ we have the following alternatives: if $\Upsilon(u)\ge 0$, then
	\begin{enumerate}
		\item[i)] $\varphi_u$ has only one critical point $s_u>0$, which is a non-degenerate global maximum point. 
	\end{enumerate}
	On the other hand, if $\Upsilon(u)< 0$, then we have again $u \in Y_1 \cap Y_2$, so that by (H5) 
	\begin{enumerate}
		\item[ii)] $\varphi_u$ has only two  critical points (both non-degenerate), the second one being a local maximum point $s_u$.
	\end{enumerate}
	Thus (H4) holds for any $u \in Y_2$. We set $d:=\inf\{\Phi (s_uu):\ u\in Y_2\}$ and prove $(HY)_{d}$. Once again from \eqref{coercibe}, we see that if $(u_n) \subset \mathcal{N}\cap Y_2$ and $\Phi(s_{u_n}u_n) \to d$ then we can assume that $v_n:=s_{u_n}u_n\rightharpoonup v$ in $X$. From $-(\upsilon-1)\varphi_{v_n}'(1)+\varphi_{v_n}''(1)<0$, it follows that 
	$$C^{-1}\|v_n\|^\kappa\leq P(v_n)<-\frac{\gamma-\upsilon}{\kappa-\upsilon}\Gamma(v_n)\leq C\|v_n\|^{\gamma}$$
	Hence there exists $C>0$ such that $\|v_n\|\ge C$, and consequently $\Gamma(v)<0$, i.e. $v\in Y_2$.
	Therefore $d$ is achieved by a critical point of $\Phi$.
\end{proof}

\begin{rmk}\label{rcc}\strut
\begin{enumerate}
\item An abstract result similar to Corollary \ref{cc} has been proved in \cite{BW}. We point out that in  \cite{BW} the functionals $\Upsilon$ and $\Gamma$ are assumed to be weakly continuous.\\

\item From the previous proof we see that Corollary \ref{cc} still holds if the inequality
$ P(u)\geq C^{-1}\|u\|^{\kappa}$ holds for $u \in \mathcal{N}$. More precisely, it must hold along minimizing sequences for $c$ and $d$. 
\end{enumerate}	
\end{rmk}

Several problems are associated to energy functionals having the structure above. 
Let $a,b \in L^{\infty}(\Omega)$ with $a^+,b^+ \not \equiv 0$, $\lambda>0$, and $1<q<p<r<p^*$. The functional given by \eqref{fcc} with  
\begin{equation}\label{ecc}
P(u)=\|u\|^p, \quad \Upsilon(u)=\Upsilon_\lambda(u)= -\lambda \int_\Omega a|u|^{q}  \quad \mbox{ and } \quad \Gamma(u)=-\int_\Omega b|u|^{r}, 
\end{equation} 
(or $\Upsilon(u)=-\lambda \int_\Omega au$) defined in $X=W_0^{1,p}(\Omega)$, 
has been treated by many authors (see e.g. \cite{BW,I,Ta}).
It is easily seen that such $P$, $\Upsilon,\Gamma$ satisfy the conditions of Corollary \ref{cc}.
Note also that for any $\lambda>0$  we have $Y_1=\left\{u \in X: \int_\Omega a|u|^q>0\right\}$. 
Set
\begin{equation*}\label{ls}
\lambda^*:=  \inf_{u\in Y_1 \cap Y_2} \lambda(u), \quad \mbox{where} \quad \lambda(u):=\frac{r-p}{r-q}\left(\frac{p-q}{r-q}\right)^{\frac{p-q}{r-p}}\frac{\|u\|^{p\frac{r-q}{r-p}}}{(\int_\Omega a|u|^q) \left(\int_\Omega b|u|^r\right)^{\frac{p-q}{r-p}}}
\end{equation*}
is the so-called nonlinear Rayleigh's quotient (see \cite{I1}).  We observe that $\lambda(u)$ is $0$-homogeneous and it  is obtained as the unique solution with respect to $(t,\lambda)$ of the system $\varphi_u'(t)=\varphi_u''(t)=0$.  One can easily show that $\lambda^*$ is achieved and consequently $\lambda^*>0$. It follows that (H5) holds for $0<\lambda<\lambda^*$, and Corollary \ref{cc} provides some of the results in \cite{BW,I,Ta}. 

Corollary \ref{cc} also applies if  $$X = W^{2, \frac{p}{p-1}}(\Omega)\cap W_{0}^{1,\frac{p}{p-1}}(\Omega), \quad P(u)=\|u\|^{\frac{p}{p-1}}=\int_\Omega |\Delta u|^{\frac{p}{p-1}}$$ and $\Upsilon_\lambda$, $\Gamma$ are as in \eqref{ecc}, with now $1<q<\frac{p}{p-1}<r$ and $\frac{1}{p}+\frac{1}{r}>\frac{N-2}{N}$. These results, with $b \equiv 1$, were established in \cite{E1,E2}.

Further applications of Corollary \ref{cc} will be given in section \ref{spq} to deal with
 \eqref{fcc}, still defined in $X=W_0^{1,p}(\Omega)$, but now with
$$P(u)=\|u\|^p, \quad \Upsilon(u)=\int_\Omega \left( |\nabla u|^q - \lambda |u|^q\right), \quad \mbox{ and } \quad \Gamma(u)=-\int_\Omega b|u|^{r},$$

or $$P(u)=\|u\|^q, \quad \Upsilon(u)=-\int_\Omega b|u|^{r}, \quad \mbox{ and } \quad \Gamma(u)=\int_\Omega \left( |\nabla u|^p - \lambda |u|^p\right),$$
which correspond to $(p,q)$-Laplacian problems with $1<q<p$.

We shall also apply  Corollary \ref{cc} to  \eqref{fcc} with $X:=H_0^1(\Omega)\times H_0^1(\Omega)$, which is a Banach space with the norm given by $\|(u,v)\|^2=\int_\Omega \left(|\nabla u|^2+|\nabla v|^2\right)$, and 
$$P(u,v)=\|v\|_q^q, \quad \Upsilon(u,v)=\|(u,v)\|^2-2\lambda\int_\Omega uv, \quad \mbox{and} \quad \Gamma(u,v)=-\int_\Omega b|u|^r,$$
where $2<q<r<2^*$. This functional is associated to a gradient system, see subsection \ref{kayeexample1}.
	
\medskip

\subsection{Further examples}\label{fe}\strut\\

In the next sections we provide several applications of our results. Before that, let us show that some further established results can be derived from Theorems \ref{thm2}-\ref{tp3}.

Given $p \in (1,\infty)$ we denote by 
\begin{equation}\label{eigenproblem}
\lambda_1(p):=\inf\left\{\int_\Omega |\nabla u|^p: u\in W_0^{1,p}(\Omega), \int_\Omega |u|^p=1\right\}
\end{equation}
the first eigenvalue of the Dirichlet p-Laplacian and by $\phi_1(p)>0$ a first eigenfunction associated to $\lambda_1(p)$. When $p=2$ we simpy write $\phi_1$ and $\lambda_1$.

\medskip
\subsection{A $(p,q)$-Laplacian problem}\strut \\\label{pqexample}

First we give a simple example where Theorems \ref{thm2} and \ref{tp1} apply. 
Consider the $(p,q)$-Laplacian problem \begin{equation}\label{pex}
-\Delta_p u -\Delta_q u = \alpha |u|^{p-2}u+\beta |u|^{q-2}u, \quad u \in W_0^{1,p}(\Omega),
\end{equation}
with $1<q<p$ and $\alpha,\beta \in \Re$. This problem has been recently investigated in  \cite{BT,BT2,BT3}. Regarding the use of the Nehari manifold or the fibering method for $(p,q)$-Laplacian problems, we refer to \cite{BT,BT3,CI,FRQ, LY,PRR,RQ}.

Let 
$$\Phi(u)=\frac{1}{p}\int_\Omega \left(|\nabla u|^p- \alpha|u|^p \right)+ \frac{1}{q}\int_\Omega \left(|\nabla u|^q-\beta |u|^q\right)$$
be defined in  $X=W_0^{1,p}(\Omega)$. Note that $\Phi$ and $J$ given by
$$J(u)=\int_\Omega \left(|\nabla u|^p- \alpha|u|^p \right)+ \int_\Omega \left(|\nabla u|^q-\beta |u|^q\right)$$ are weakly lower semicontinuous.
We apply Theorems \ref{thm2} and \ref{tp1} with $$Y_1=\left\{u \in X: \int_\Omega \left(|\nabla u|^p- \alpha|u|^p \right)<0\right\} $$
and
$$Y_2=\left\{u \in X: \int_\Omega \left(|\nabla u|^q- \beta|u|^q \right)<0\right\},$$
respectively. 

We see that $Y_1$ and $Y_2$ are nonempty if, and only if, $\alpha >\lambda_1(p)$ and $\beta>\lambda_1(q)$, respectively.
We set 
$$
\beta_*(\alpha):=\inf_{u\in \overline{Y}_1 \setminus \{0\}}\frac{\int_\Omega |\nabla u|^q}{\int_\Omega |u|^q} \quad \mbox{and} \quad \alpha_*(\beta):=\inf_{u\in \overline{Y}_2 \setminus \{0\}}\frac{\int_\Omega |\nabla u|^p}{\int_\Omega |u|^p}.
$$

We claim that for any $\beta<\beta_*(\alpha)$ there exists $C>0$ such that
\begin{equation}\label{ipq}
\int_\Omega \left(|\nabla u|^q- \beta|u|^q \right) \geq C\|u\|^q \quad \forall u \in \overline{Y}_1.
\end{equation}
Indeed, on the contrary there would be a sequence $(u_n) \subset Y_1$ such that
$\|u_n\|=1$ and $\int_\Omega \left(|\nabla u_n|^q- \beta|u_n|^q \right) \to 0$. We can assume that $u_n \rightharpoonup u$ in $X$, so that by weak lower semicontinuity, we have $\int_\Omega \left(|\nabla u|^p- \alpha|u|^p \right)\leq 0$ and $\int_\Omega \left(|\nabla u|^q- \beta|u|^q \right)\leq 0$. Moreover, from the first inequality it is clear that $u \neq 0$, and the second one implies that $\beta \geq \beta_*(\alpha)$.\\

In a similar way, one can show that for $\alpha<\alpha_*(\beta)$ there exists $C>0$ such that 
\begin{equation}\label{ipq1}
\int_\Omega \left(|\nabla u|^p- \alpha|u|^p \right) \geq C\|u\|^p \quad \forall u \in \overline{Y}_2.
\end{equation}
\medskip

	\noindent	{\bf Minimization in $\mathcal{N} \cap Y_1$:} 	
Let us take $\alpha >\lambda_1(p)$ and $\beta<\beta_*(\alpha)$.
It is straightforward that (H1) holds for all $u \in Y_1$. 
By \eqref{ipq} we have
$$\Phi(u)=\frac{p-q}{pq}\int_\Omega \left(|\nabla u|^q-\beta |u|^q\right)\geq C\|u\|^q$$
for any $u \in \mathcal{N} \cap Y_1$,
 and
\begin{equation}\label{ij}
J(u) \geq  C\|u\|^q+\int_\Omega \left(|\nabla u|^p-\alpha|u|^p \right) \geq C\|u\|^q-C_1\|u\|^p
\end{equation}
for $u \in Y_1$. It follows that $J>0$ on $\partial Y_1 \setminus \{0\}$ and in
$Y_1 \cap B(0,R)$, for some $R>0$.   Moreover, the first inequality in \eqref{ij} also shows that (HJ) holds for $u=0$.
 From Lemma \ref{l2} we infer that $(HY)_{c_1}$ is satisfied for $c_1:=\inf_{\mathcal{N}\cap Y_1} \Phi$. So Theorem \ref{thm2} yields that $c_1$ is a critical value of $\Phi$. Moreover, if $\beta<\lambda_1(q)$ then  $J>0$ in $X \setminus Y_1$, and consequently $c_1>0$ is the ground state level of $\Phi$.\\

	\noindent{\bf Minimization in $\mathcal{N} \cap Y_2$:} 	 It is clear that (H2) holds for any $u \in Y_2$. Given $\beta >\lambda_1(q)$ and $\alpha<\alpha_*(\beta)$, 
by \eqref{ipq1} we have that $\Phi$ is coercive in $Y_2$. Moreover, if $u \in \partial Y_2$ then $\Phi(u)=\frac{1}{p}\int_\Omega \left(|\nabla u|^p- \alpha|u|^p \right)>0$.  Finally, $\mathcal{N} \setminus Y_2$ is empty for $\alpha\leq \lambda_1(p)$, whereas $\Phi>0$ on $\mathcal{N} \setminus Y_2$  for $\alpha>\lambda_1(p)$. By Theorem \ref{tp1}, we infer that  $c_2:=\inf_{\mathcal{N}\cap Y_2} \Phi<0$ is the ground state level of $\Phi$.\\

 \begin{rmk}\label{pqrem}  {\rm Let $\phi_q=\phi_1(q)$, $\phi_p=\phi_1(p)$, and
		\begin{equation*}
		\alpha_0:=\frac{\int_\Omega |\nabla \phi_q|^p}{\int_\Omega \phi_q^p}, \ \ \ \beta_0:=\frac{\int_\Omega |\nabla \phi_p|^q}{\int_\Omega \phi_p^q}.
		\end{equation*}
		In \cite[Proposition 7]{BT2} the following properties of $\beta_{*}(\alpha)$ are shown, among others:
		$$ \lambda_1(q)<\beta_*(\alpha)<\beta_* \mbox{ for } \alpha\in (\lambda_1(p),\alpha_*) \quad \mbox{and} \quad \beta_*(\alpha)=\lambda_1(q) \mbox{ for } \alpha\ge \alpha_*.$$
		In a similar way, $\alpha_*(\beta)$ satisfies
		$$\lambda_1(p)<\alpha_*(\beta)<\alpha_*\mbox{ for } \beta\in (\lambda_1(q),\beta_*)\quad \mbox{and} \quad \alpha_*(\beta)=\lambda_1(p)\mbox{ for }\beta\ge \beta_*.$$
		It turns out that 
		\begin{equation}\label{rab}
		\alpha_*(\beta_*(\alpha))=\alpha \mbox{ for  } \alpha\in (\lambda_1(p),\alpha^*) \mbox{ and } \beta_*(\alpha_*(\beta) )=\beta \mbox{ for  }\beta\in (\lambda_1(q),\beta_*)
		\end{equation}
		i.e. $\alpha_*(\beta)$ and $\beta_*(\alpha)$ yield the same curve.
		
		Indeed, as shown in the proof of \cite[Proposition 7]{BT2},  any minimizer $u_\alpha$ associated to $\beta_*(\alpha)$ satisfies $\int_\Omega \left(|\nabla u_\alpha|^p- \alpha|u_\alpha|^p \right)= 0$ for $\alpha\in (\lambda_1(p),\alpha^*)$.  It follows that $\alpha_*(\beta_*(\alpha))\le \alpha$. Moreover if $\alpha_*(\beta_*(\alpha))< \alpha$ then there exists $u \in X\setminus \{0\}$ such that $\frac{\int_\Omega |\nabla u|^p}{\int_\Omega |u|^p}<\alpha$ and $\int_\Omega \left(|\nabla u|^q- \beta_*(\alpha)|u|^q \right)\leq 0$.  So $u$ is not a minimizer associated to  $\beta_*(\alpha)$, i.e. 
		$\frac{\int_\Omega |\nabla u|^q}{\int_\Omega |u|^q}>\beta_*(\alpha)$, which provides a contradiction. Therefore
		$\alpha_*(\beta_*(\alpha))=\alpha$ for $\alpha\in (\lambda_1(p),\alpha^*)$, and the second assertion in \eqref{rab} is completely similar.
	}
\end{rmk}

Summing up,  we proved the following result, which yields \cite[Proposition 2]{BT} and \cite[Theorem 2.7]{BT2}:
\begin{cor}
Let $1<p<q$. 
\begin{enumerate}
	\item If $\beta<\lambda_1(q)$  then \eqref{pex} has	 a ground state solution at a positive level for $\alpha >\lambda_1(p)$.
	\item If $\beta>\lambda_1(q)$  then \eqref{pex} has	a ground state solution at a negative level for $\alpha <\alpha_*(\beta)$, and a second solution (which is a ground state solution relative to $Y_1$) for $\lambda_1(p)<\alpha<\alpha_*(\beta)$.\\
\end{enumerate} 
\end{cor}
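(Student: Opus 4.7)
My plan is to deduce the corollary directly from the two minimization analyses already carried out in this subsection (``Minimization in $\mathcal{N}\cap Y_1$'' and ``Minimization in $\mathcal{N}\cap Y_2$''), combined with the structural information on the curves $\alpha_*(\beta)$ and $\beta_*(\alpha)$ recorded in Remark \ref{pqrem}. No new estimates are needed.

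For part (1), assume $\beta<\lambda_1(q)$ and $\alpha>\lambda_1(p)$. I would invoke Remark \ref{pqrem} to obtain $\beta_*(\alpha)\ge\lambda_1(q)>\beta$, so that the conditions $\alpha>\lambda_1(p)$ and $\beta<\beta_*(\alpha)$ required by the $Y_1$-analysis hold. Since the additional condition $\beta<\lambda_1(q)$ forces $J>0$ on $X\setminus Y_1$, that analysis yields that $c_1>0$ is the ground state level of $\Phi$ and is achieved.

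For part (2), the first (negative level) solution comes directly from the $Y_2$-analysis under the assumptions $\beta>\lambda_1(q)$ and $\alpha<\alpha_*(\beta)$, producing a critical point at the ground state level $c_2<0$. For the second solution, under the extra restriction $\alpha>\lambda_1(p)$, I must verify $\beta<\beta_*(\alpha)$ in order to apply the $Y_1$-analysis. The main point is that on the ranges $\alpha\in(\lambda_1(p),\alpha_*)$ and $\beta\in(\lambda_1(q),\beta_*)$ the maps $\alpha\mapsto\beta_*(\alpha)$ and $\beta\mapsto\alpha_*(\beta)$ are strictly decreasing (the cones $Y_1,Y_2$ grow with the corresponding parameter, so the underlying Rayleigh-type infima decrease) and, by \eqref{rab}, mutual inverses; hence $\alpha<\alpha_*(\beta)\Longleftrightarrow \beta<\beta_*(\alpha)$. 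The $Y_1$-analysis then furnishes a critical point at level $c_1$, a ground state relative to $Y_1$.

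It remains to check that the two solutions in part (2) are distinct. Using the Nehari identity $J(u)=0$ to eliminate one of the integrals, one rewrites
\[
\Phi(u)=\frac{p-q}{pq}\int_\Omega\bigl(|\nabla u|^q-\beta|u|^q\bigr) \quad \text{for } u\in\mathcal{N},
\]
whose sign, since $p>q$, coincides with that of $\int_\Omega(|\nabla u|^q-\beta|u|^q)$. Hence $\Phi<0$ on $\mathcal{N}\cap Y_2$, and by the analogous identity written in terms of $\int_\Omega(|\nabla u|^p-\alpha|u|^p)$, $\Phi>0$ on $\mathcal{N}\cap Y_1$; in particular $\mathcal{N}\cap Y_1$ and $\mathcal{N}\cap Y_2$ are disjoint, so the two minimizers cannot coincide. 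The one delicate point in the plan is the equivalence $\alpha<\alpha_*(\beta)\Leftrightarrow\beta<\beta_*(\alpha)$ used in the second-solution argument; once the monotonicity and inverse properties from Remark \ref{pqrem} are granted, the rest is bookkeeping of the two analyses already performed.
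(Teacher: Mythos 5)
Your proposal is correct and follows essentially the same route as the paper, whose "proof" of this corollary is precisely the preceding analyses on $\mathcal{N}\cap Y_1$ and $\mathcal{N}\cap Y_2$ together with the identity $\alpha_*(\beta_*(\alpha))=\alpha$, $\beta_*(\alpha_*(\beta))=\beta$ from Remark \ref{pqrem}. The only additions you make are to spell out the equivalence $\alpha<\alpha_*(\beta)\Leftrightarrow\beta<\beta_*(\alpha)$ via monotonicity and to verify distinctness of the two solutions in part (2) through the sign of $\Phi=\frac{p-q}{pq}\int_\Omega(|\nabla u|^q-\beta|u|^q)$ on $\mathcal{N}$, both of which the paper leaves implicit (note $c_1>0$ already follows from Theorem \ref{thm2}, so distinctness from $c_2<0$ is immediate).
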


\medskip

\subsection{A Kirchhoff problem}\strut \\

We consider the Kirchhoff equation
\begin{equation}\label{pk2}
-\left(a+b\int_{\Omega}|\nabla u|^2 dx\right)\Delta u= \lambda u+\mu |u|^2u, \quad u \in H_0^1(\Omega),
\end{equation}
where $a,b>0$, and $\lambda,\mu \in \Re$. Kirchhoff type problems have been largely investigated, mostly bt variational methods. We refer to \cite{CO,CKW,FRQ,LTW,SS} for works on this class of problems via the Nehari manifold method.

 Let	us set, for $u \in X=H_0^1(\Omega)$,
$$\Phi_1(u):=a\| u\|^2-\lambda \|u\|_2^2\quad \mbox{and} \quad \Phi_2(u):=b\|u\|^4-\mu \|u\|_4^4.$$
The energy functional for \eqref{pk2} is given by $
\Phi(u)=\frac{1}{2}\Phi_1(u)+\frac{1}{4}\Phi_2(u)$,
so that $J=\Phi_1+\Phi_2$.
As mentioned in the introduction, this problem has a variational structure similar to the one of the $(p,q)$ Laplacian problem in the previous subsection.

Let $\psi_1$ be the unique positive minimizer achieving (see \cite{D})
$
\mu_1=\inf\left\{\frac{\|u\|^4}{\|u\|_4^4}:u\in X\setminus\{0\}\right\}
$. Set $\lambda^*=a\frac{\|\psi_1\|^2}{\|\psi_1\|_2^2}$ and $\mu^*=b\frac{\|\phi_1\|^4}{\|\phi_1\|_4^4}$,
and note that $\lambda^*>a\lambda_1$ and $\mu^*>b\mu_1$ (see \cite{CO} or \cite[Lemma 3]{SS}).

We shall deal with the cones 	$$Y_1:=\left\{u\in X: \Phi_2(u)<0  \right\} \quad \mbox{and} \quad Y_2:=\left\{u\in X: \Phi_1(u)<0  \right\}$$ and the extremal parameter (see \cite{I1})
$$
\mu^*(\lambda):=\inf\left\{b\frac{\|u\|^4}{\|u\|_4^4}: u \in \overline{Y}_2\setminus \{0\}\right\}.
$$
By \cite[Proposition 3]{SS} we have $\mu^*(\lambda)\in(b\mu_1,\mu^*)$ for all $\lambda\in(a\lambda_1,\lambda^*)$.

	For each $\mu>b\mu_1$, define
	$$
	\lambda^*(\mu):=\inf\left\{a\frac{\|u\|^2}{\|u\|_2^2}: u \in \overline{Y}_1\setminus\{0\} \right\}.
	$$
	Similar to Remark \ref{pqrem}, by using \cite[Theorem 6]{SS} and the ideas of \cite{BT2} one can prove that $\lambda^*(\mu)$ and $\mu^*(\lambda)$ yield the same curve. From now on we assume this fact.\\

	\noindent {\bf Minimization in $\mathcal{N} \cap Y_1$:}	
		Let us prove that Theorem \ref{thm2} can be applied for $(\lambda,\mu)\in A:=[a\lambda_1,\lambda^*)\times(b\mu_1,\mu^*(\lambda))\cup(-\infty,a\lambda_1)\times(b\mu_1,+\infty)$. From \cite[Proposition 4]{SS}, it follows that $Y_1$ is non-empty and one can easily see that (H1) is satisfied on $Y_1$. Now we claim that
		\begin{equation}\label{kirkk1}
		\Phi_1(u)\ge C\|u\|^2 \quad \forall u\in \overline{Y}_1.
		\end{equation}
		Otherwise we can find a sequence $(u_n)\in S\cap \overline{Y}_1$ such that $\Phi_1(u_n)< 1/n$. We can assume that $u_n \rightharpoonup u$, so that $\Phi_1(u_0)\le 0$ and $\Phi_2(u)\le 0$. Moreover, it is clear that $u \not \equiv 0$, which contradicts $(\lambda,\mu)\in B$ and thus \eqref{kirkk1} holds true.
		
		Next we prove that $(HY)_{c_1}$ is satisfied for $c_1:=\inf_{\mathcal{N}\cap Y_1} \Phi$: indeed note by \eqref{kirkk1} that
		\begin{equation*}
		J(u)\ge C\|u\|^2+\Phi_2(u)\ge C\|u\|^2-C_1\|u\|^4, \quad \forall u\in \overline{Y}_1.
		\end{equation*}
		It follows that $J>0$ on $\partial Y \setminus \{0\}$ and in
		$Y \cap B(0,R)$, for some $R>0$.  Moreover, the first inequality also shows that (HJ) holds with $u=0$.
		From Lemma \ref{l2} we infer that $(HY)_{c_1}$ holds, so Theorem \ref{thm2} yields that $c_1$ is a critical value of $\Phi$. Moreover, if $\lambda<a\lambda_1$ then  $J>0$ in $X \setminus Y_1$, so that $c_1>0$ is the ground state level of $\Phi$.\\

		\noindent {\bf Minimization in $\mathcal{N} \cap Y_2$:}
		We claim that Theorem \ref{tp1} can be applied for $(\lambda,\mu)\in B:=(a\lambda_1,\lambda^*)\times[b\mu_1,\mu^*(\lambda))\cup(a\lambda_1,+\infty)\times(-\infty,b\mu_1)$. From \cite[Proposition 4]{SS}, it follows that $Y_2$ is non-empty for all $(\lambda,\mu)\in B$ and (H2) is satisfied therein. Let us  prove that there exists a positive constant $C$ such that
		\begin{equation}\label{kirkk}
		\Phi_2(u)\ge C\|u\|^4,\quad \forall u\in \overline{Y}_2.
		\end{equation}
		Otherwise we can find a sequence $(u_n)\in S\cap Y_2$ such that $\Phi_2(u_n)< 1/n$. We can assume that $u_n \rightharpoonup u$, so that $\Phi_1(u)le 0$ and $\Phi_2(u)\le 0$. Moreover, it is clear that $u \not \equiv 0$, which contradicts $(\lambda,\mu)\in B$ and \eqref{kirkk} holds true. Thus $\Phi$ is coercive in $Y_2$. Still from \eqref{kirkk} we have $\Phi=\frac{1}{4}\Phi_2>0$  on  $\partial Y_2$.
		Since $\Phi=\frac{1}{4}\Phi_1\ge 0$ on $\mathcal{N}\setminus Y_2$ we conclude from Theorem \ref{tp1} that $c_2:=\inf_{\mathcal{N} \cap Y_2} \Phi=\inf_{Y_2} \Phi=\inf_{\mathcal{N}} \Phi$ is negative and achieved by a local minimizer of $\Phi$. \\

Hence we obtain the following result \cite[Theorem 2]{SS}:
\begin{cor}\strut
	\begin{enumerate}
		\item Assume either $\lambda<a \lambda_1$ and $\mu >b \mu_1$ or $a \lambda_1\le \lambda<\lambda^*$ and $b \mu_1 < \mu < \mu^*(\lambda)$. Then \eqref{pk2} has a  solution which is a ground state relative to $Y_1$. Moreover, this one is a ground state solution at a positive level for $\lambda<a \lambda_1$.
		\item Assume either $\lambda>a \lambda_1$ and $\mu <b \mu_1$ or $a \lambda_1< \lambda<\lambda^*$ and $b \mu_1 \le \mu < \mu^*(\lambda)$. Then \eqref{pk2} has a ground state  solution, which is a local minimizer of the energy functional, at a negative level.
	\end{enumerate}	
\end{cor}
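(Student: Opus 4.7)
The plan is to derive item (1) from Theorem \ref{thm2} applied on $Y_1$ and item (2) from Theorem \ref{tp1} applied on $Y_2$. Most of the technical verifications have already been carried out in the discussion preceding the corollary; the task is to confirm that, in each of the two listed parameter regions, all the hypotheses of the respective abstract theorem are met, and then to read off the conclusions.

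For (1), the parameter range is exactly the region $A=[\,a\lambda_1,\lambda^*)\times(b\mu_1,\mu^*(\lambda))\cup(-\infty,a\lambda_1)\times(b\mu_1,+\infty)$ analyzed above. The weak lower semicontinuity of $\Phi$ and $J=\Phi_1+\Phi_2$ on $H_0^1(\Omega)$ is immediate, since the $L^2$ and $L^4$ terms are weakly continuous by the compact Sobolev embeddings. The condition (H1) on $Y_1$ reduces to examining $\varphi_u'(t)=t\Phi_1(u)+t^3\Phi_2(u)$ when $\Phi_2(u)<0$: this has a unique positive root, which is a global maximum. The estimate $\Phi_1(u)\ge C\|u\|^2$ on $\overline{Y}_1$ established above (where the equality $\lambda^*(\mu)=\mu^*(\lambda)$ is used to exclude the borderline cases) then gives $\Phi\ge\tfrac{C}{2}\|u\|^2$ on $\mathcal N\cap Y_1$, together with $J\ge C\|u\|^2-C_1\|u\|^4$ on $\overline{Y}_1$. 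The latter yields both $J>0$ on $\partial Y_1\setminus\{0\}$ and on $Y_1\cap B(0,R)$ for some $R>0$, and (HJ) at the origin. Lemma \ref{l2} then delivers $(HY)_{c_1}$, and Theorem \ref{thm2} produces a critical point at level $c_1>0$, which is the relative ground state. Finally, for $\lambda<a\lambda_1$ the quadratic form $\Phi_1$ is everywhere positive, hence $J>0$ on $X\setminus Y_1$, so the final part of Theorem \ref{thm2} upgrades $c_1$ to the absolute ground state level.

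For (2), the parameter range coincides with the region $B$ treated above. Condition (H2) on $Y_2$ follows because when $\Phi_1(u)<0$, the fibering map $\varphi_u(t)=\tfrac{t^2}{2}\Phi_1(u)+\tfrac{t^4}{4}\Phi_2(u)$ has a unique positive critical point, which is a global minimum with negative value. The bound $\Phi_2(u)\ge C\|u\|^4$ on $\overline{Y}_2$, proved by the same weak-limit contradiction that invokes $\lambda^*(\mu)=\mu^*(\lambda)$, makes $\Phi$ coercive on $Y_2$; moreover $\Phi=\tfrac{1}{4}\Phi_2\ge 0$ on $\partial Y_2$ and $\Phi=\tfrac{1}{4}\Phi_1\ge 0$ on $\mathcal N\setminus Y_2$. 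Theorem \ref{tp1} then asserts that $c_2:=\inf_{\mathcal N\cap Y_2}\Phi=\inf_{Y_2}\Phi$ is negative, attained, and actually the absolute ground state level, with the minimizer being a local minimizer of $\Phi$.

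The main obstacle is therefore the two definiteness bounds $\Phi_1\ge C\|u\|^2$ on $\overline{Y}_1$ and $\Phi_2\ge C\|u\|^4$ on $\overline{Y}_2$, and in particular the treatment of the borderline cases $\lambda=a\lambda_1$ or $\mu=b\mu_1$; this is where one needs the non-trivial fact, assumed from the Kirchhoff discussion above, that $\lambda^*(\mu)$ and $\mu^*(\lambda)$ trace the same curve, preventing the weak limit of a normalized sequence from simultaneously satisfying $\Phi_1\le 0$ and $\Phi_2\le 0$ with $u\not\equiv 0$. Once these estimates are in hand, everything else collapses to direct applications of Theorems \ref{thm2} and \ref{tp1}.
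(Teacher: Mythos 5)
Your proposal is correct and follows essentially the same route as the paper: Theorem \ref{thm2} via Lemma \ref{l2} on $Y_1$ and Theorem \ref{tp1} on $Y_2$, with the two definiteness bounds $\Phi_1\ge C\|u\|^2$ on $\overline{Y}_1$ and $\Phi_2\ge C\|u\|^4$ on $\overline{Y}_2$ obtained by the same weak-limit contradiction resting on the coincidence of the curves $\lambda^*(\mu)$ and $\mu^*(\lambda)$. (Only a cosmetic slip: on $\mathcal{N}\cap Y_1$ one gets $\Phi=\tfrac14\Phi_1\ge\tfrac{C}{4}\|u\|^2$, not $\tfrac{C}{2}\|u\|^2$, which changes nothing.)
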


\medskip




\medskip

\section{Applications to pdes} 
\medskip
Let us obtain now some new results for $(p,q)$-Laplacian problems and Kirchhoff equations.
Throughout the next sections we assume that $b \in L^{\infty}(\Omega)$, and $g:\Omega \times \Re \to \Re$ and $f:\Re \to \Re$ are continuous. We set $G(x,t):=\displaystyle\int_{0}^{t}g(x,s)\ ds$ and $F(t):=\displaystyle\int_{0}^{t}f(s)\ ds$ for $t \in \Re$.\\

\subsection{Generalized $(p,q)$-Laplacian problems}\strut\\
\label{spq}

Let $a:[0,\infty)\rightarrow
[0,\infty)$ be a $\mathcal{C}^{1}$ function satisfying the following conditions:\\
\begin{itemize}
	\item[(A1)] $k_0\left( 1+t^{\frac{q-p}{p}}\right) \leq a(t) \leq k_1\left( 1+t^{\frac{q-p}{p}}\right)$ for every  $t>0$, and some constants $k_0,k_1>0$ and $p\geq q>1$.
	\item[(A2)] $a$ is non-increasing.
	\item[(A3)]  $t\mapsto a(t^p)t^p$ and $t \mapsto A(t^p)$ are convex in  $(0,\infty)$.\\
\end{itemize}
The problem
\begin{equation}\label{quasi}
-div \left(a(|\nabla u|^p)|\nabla u|^{p-2}\nabla u\right) = g(x,u), \quad u \in W_0^{1,p}(\Omega),
\end{equation}
is a generalized $(p,q)$-Laplacian problem, as the operator in the left-hand side reduces to $-\Delta_p -\Delta_q$ if we choose $a(t)=1+t^{\frac{q-p}{p}}$, which is one of the main prototypes satisfying (A1)-(A3).

Let us set $A(t):=\displaystyle\int_{0}^{t}a(s)\ ds$ for $t \ge 0$.

\begin{cor} \label{c2}
	Assume  that $g(x,t)=0$ for every $x \in \Omega \setminus \Omega'$, where $\Omega'$ is an open subset of $\Omega$. Moreover, assume that:
	\begin{enumerate}
		
		\item $\dis \lim_{|t| \rightarrow 0} \frac{g(x,t)}{|t|^{q-1}}=0$ uniformly for $x \in \Omega'$.
		\item There exists $r \in (p,p^*)$ such that $\dis \lim_{|t| \rightarrow \infty} \frac{g(x,t)}{|t|^{r-1}}=0$ uniformly for $x \in \Omega'$.
		\item For every $x \in \Omega'$ the map $t \mapsto \frac{g(x,t)}{|t|^{p-1}}$
		is increasing on $\Re \setminus \{0\}$.
		\item $\dis \lim_{|t| \to \infty}\frac{G(x,t)}{|t|^p}=\infty$ uniformly for  $x \in \Omega'$.
	\end{enumerate}
	Then \eqref{quasi} has a positive ground state.
\end{cor}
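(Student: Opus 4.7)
The plan is to apply Corollary \ref{c1} to the decomposition $\Phi = I_0 - I$, where
\[
I_0(u) := \frac{1}{p}\int_\Omega A(|\nabla u|^p), \qquad I(u) := \int_\Omega G(x, u_+),
\]
on the open cone $Y := \{u \in W_0^{1,p}(\Omega) : I(u) > 0\}$. Working with $G(x,u_+)$ in place of $G(x,u)$ (equivalently, replacing $g$ by the truncation $\tilde g(x,t) := g(x,t_+)$) forces any nontrivial critical point $u_0$ of the truncated energy to be nonnegative: testing its Euler--Lagrange equation against $u_{0,-}$ and using the ellipticity $a \geq k_0 > 0$ from (A1) yields $\int_{\{u_0 < 0\}}|\nabla u_0|^p = 0$. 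A strong maximum principle for the generalized $(p,q)$-Laplacian then upgrades $u_0$ to a strictly positive solution of \eqref{quasi}.

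The hypotheses of Corollary \ref{c1} are verified as follows. Weak lower semicontinuity of $I_0$ and of $u \mapsto I_0'(u)u$ comes from the convexity assumption (A3); complete continuity of $I'$ comes from the subcritical growth (1)--(2) combined with the compact embedding $W_0^{1,p}(\Omega) \hookrightarrow L^r(\Omega)$ for $r \in (p, p^*)$. The coercivity bound $I_0'(u)u \geq k_0\|u\|^p$ following from (A1) yields condition (2) with $\eta = p$, and Remark \ref{rc1} takes care of the behavior near $0$ via $J(u) \geq C\|u\|^p$ for $u$ in a small ball, derived from the interpolated growth $|g(x,t)t| \leq \varepsilon|t|^p + C_\varepsilon|t|^r$ implied by (1)--(2). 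For condition (4) with $\sigma = p$: (A2) renders $t \mapsto I_0'(tu)u/t^{p-1}$ nonincreasing, while (3) renders $t \mapsto I'(tu)u/t^{p-1} = \int[g(x,tu_+)/(tu_+)^{p-1}]u_+^p\,dx$ strictly increasing; the superlinearity (4) combined with Fatou's lemma gives $I(tu)/t^p \to \infty$ uniformly on weakly compact subsets of $Y$, while (A1) provides a finite upper bound on $I_0(tu)/t^p$.

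The step I expect to be the main obstacle is condition (3) of Corollary \ref{c1}, requiring that both $I(u)$ and $I'(u)u$ vanish on the weak boundary $\partial Y = \{I = 0\}$. Weak continuity of $I$ handles the first; the second demands that $g(x,u_+)u_+ = 0$ a.e.\ whenever $\int G(x,u_+) = 0$. The cleanest route is to establish $G(x,t) > 0$ for all $t > 0$ and $x \in \Omega'$, which is straightforward in the case $q = p$ (conditions (1) and (3) force $g(x,t)/t^{p-1} \geq 0$ on $(0,\infty)$, hence $g, G \geq 0$, and (4) yields strict positivity that propagates back by monotonicity); the case $q < p$ may require a finer analysis of the zero set of $G(x,\cdot)$ near zero. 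Should this prove delicate, I would bypass Corollary \ref{c1} and invoke Theorem \ref{thm2} directly, deriving $(HY)_c$ from Lemma \ref{l1}, which needs only the vanishing of $I$ on $\partial Y$ (automatic) together with $J > 0$ on $\partial Y \setminus \{0\}$ and in a small ball inside $Y$; the identity $g(x,s)s \geq pG(x,s)$ on $[0,\infty)$ (a direct consequence of the monotonicity in (3)) combined with the coercivity of $I_0$ will produce the required positivity.

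The ground-state conclusion then follows from the second assertion of Theorem \ref{thm2}: the same inequality $g(x,s)s \geq pG(x,s)$ together with the sign of $I(u)$ off $Y$ shows $J(u) > 0$ for every $u \in X \setminus Y$ with $u \neq 0$, so that $\mathcal{N}$ is contained in $Y$ and $c := \inf_{\mathcal{N}\cap Y}\Phi$ is the absolute ground state level of the truncated energy---hence, via the positivity argument above, of \eqref{quasi}.
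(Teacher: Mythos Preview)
Your overall strategy—applying Corollary \ref{c1} to the decomposition $\Phi = I_0 - I$—matches the paper's, but the choice of cone is different, and that is precisely where the difficulty you anticipate originates. The paper takes
\[
Y := \{u \in X : u \not\equiv 0 \text{ in } \Omega'\},
\]
whose (weak) boundary is $\partial Y = X \setminus Y = \{u : u \equiv 0 \text{ in } \Omega'\}$. Since $g(x,\cdot)$ vanishes outside $\Omega'$, both $I$ and $u\mapsto I'(u)u$ vanish identically on $\partial Y$, so condition (3) of Corollary \ref{c1} is automatic. With this cone the remaining verifications (weak lower semicontinuity from (A3), complete continuity of $I'$ from the subcritical growth, $I_0'(u)u \geq k_0\|u\|^p$ from (A1), the monotonicity in (A2) and hypothesis (3), the limits from (A1) and hypothesis (4), and the near-zero estimate via Remark \ref{rc1}) proceed exactly as you outline, and the final clause of Corollary \ref{c1} applies because $I'(u)u = 0 < I_0'(u)u$ for every nonzero $u \in X \setminus Y$.

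Your cone $Y = \{I > 0\}$ is natural for a generic $\Phi = I_0 - I$, but it does not exploit the support hypothesis on $g$, and the obstacle you flag is real. When $q < p$, conditions (1) and (3) are compatible with $g(x,t) < 0$ for small $t > 0$, so $G(x,t_0) = 0$ can occur at a point $t_0$ with $g(x,t_0)t_0 > 0$; hence $I(u) = 0$ does not force $I'(u)u = 0$. Your fallback via $g(x,s)s \geq pG(x,s)$ and the coercivity of $I_0$ only yields $I'(u)u \geq 0$ on $\partial Y$, which is the wrong direction for concluding $J(u) = I_0'(u)u - I'(u)u > 0$: without an \emph{upper} bound on $I'(u)u$ there is no reason $J$ stays positive on $\partial Y \setminus \{0\}$, so neither condition (3) of Corollary \ref{c1} nor condition (3) of Lemma \ref{l1} is secured. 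Switching to the paper's cone removes the issue entirely.

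A minor point: the paper does not truncate and does not argue positivity of the solution itself; ``positive ground state'' here is used in the same sense as ``positive ground state level'' in the neighboring corollaries (cf.\ Theorem \ref{thm2}, which gives $c>0$). Your truncation argument is reasonable but not needed for what is actually claimed.
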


\begin{proof}
	Let $X=W_0^{1,p}(\Omega)$  with $\|u\|=\left(\int_\Omega |\nabla u|^p\right)^{\frac{1}{p}}$, and $\Phi:=I_0-I$, where
	\begin{equation}
	I_0(u):=\frac{1}{p}\displaystyle\int_{\Omega}A(|\nabla u|^p)\quad \mbox{and} \quad I(u):=\int_\Omega G(x,u).
	\end{equation}
	for $u \in X$. Note that
	$$I_0'(u)u=\int_\Omega a(|\nabla u|^p) |\nabla u|^p\quad \mbox{and}\quad I'(u)v= \int_\Omega g(x,u)v.$$
	(A3) and (2) provide the weak lower semicontinuity of $I_0$ and $u \mapsto I_0'(u)u$, and the strong continuity of $I'$, respectively. (A1) implies that $I'_0(u)u \geq  k_0\|u\|^p$, whereas (1) and (2) imply that $J(u) \geq C_1\|u\|^p-C_2\|u\|^r$ (see the proof of \cite[Corollary 2.1]{FRQ}).
	We set 
	$$Y:=\{u \in X: u \not \equiv 0 \text{ in } \Omega'\},$$
	which is clearly an open cone in $X$.
	We also see that $\partial Y=X \setminus Y=\{u \in X: u  \equiv 0 \text{ in } \Omega'\}$ so that $I(u)=I'(u)u=0$ for every  $u \in \partial Y=X \setminus Y$.

	From (A1) we have
	$$\frac{I_0(tu)}{t^p}\leq \frac{k_1}{q}t^{q-p} \int_\Omega |\nabla u|^q+\frac{k_1}{p}\|u\|^p \quad \mbox{and}\quad \frac{I(tu)}{t^p}=\int_\Omega  \frac{G(x,tu)}{t^p}\to \infty$$
	uniformly for $u$, in a weakly compact subset of $Y$. 
	Finally, by (A2) and (3) the maps
	$$t \mapsto \frac{I_0'(tu)u}{t^{p-1}}=\int_\Omega a\left(t^p|\nabla u|^p\right) |\nabla u|^p \quad \mbox{and} \quad t \mapsto \frac{I'(tu)u}{t^{p-1}}=\int_{\Omega'} \frac{g(x,tu)}{t^{p-1}}u$$ are nonincreasing and increasing, respectively, for every $u \in Y$.  Corollary \ref{c1} and Remark \ref{rc1} yield the conclusion.\\
\end{proof}

In particular, for $g(x,u)=b(x)f(u)$ we obtain the following result:

\begin{cor}
	Assume that $b \geq 0$, and $f$ is such that:
	\begin{enumerate}
		
		\item $\dis \lim_{t \rightarrow 0} \frac{f(t)}{t^{q-1}}=0$.
		\item $\dis \lim_{t \rightarrow \infty} \frac{f(t)}{t^{r-1}}=0$ for some  $r \in (p,p^*)$.
		\item  $t \mapsto \frac{f(t)}{|t|^{p-1}}$
		is increasing on $\Re \setminus \{0\}$ and goes to infinity as $t \to \infty$.
	\end{enumerate}
	Then the problem \begin{equation}
	-div \left(a(|\nabla u|^p)|\nabla u|^{p-2}\nabla u\right) = b(x)f(u), \quad u \in W_0^{1,p}(\Omega),
	\end{equation} has a positive ground state level.\\
\end{cor}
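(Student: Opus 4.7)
My plan is to apply Corollary \ref{c2} directly with $g(x,t):=b(x)f(t)$. The first step is to select an appropriate open set $\Omega'\subset\Omega$ outside of which $g$ vanishes and on which the structural hypotheses hold. A natural candidate is an open subset of $\Omega$ where $b$ is essentially bounded below by a positive constant $\delta>0$ (for instance, the interior of a superlevel set $\{b\ge\delta\}$; when $b$ is continuous this reduces to $\Omega'=\{b>0\}$). With this choice, $g(x,t)=0$ whenever $x\in\Omega\setminus\Omega'$, after replacing $b$ by a representative vanishing there if necessary.

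Next I would verify the four hypotheses of Corollary \ref{c2} in order. Conditions (1) and (2) are immediate since $b\in L^\infty(\Omega)$ and $f$ has the prescribed behaviour at $0$ and $\infty$. Condition (3) follows from the monotonicity of $t\mapsto f(t)/|t|^{p-1}$, using that $b(x)>0$ on $\Omega'$ preserves strict monotonicity of the product. Condition (4) is where the uniform lower bound $b\ge\delta$ on $\Omega'$ is essential: from condition (3) on $f$ one first derives $F(t)/|t|^p\to\infty$ as $|t|\to\infty$ (by integrating the monotonicity), and hence
\[
\frac{G(x,t)}{|t|^p}=b(x)\,\frac{F(t)}{|t|^p}\ge\delta\,\frac{F(t)}{|t|^p}\longrightarrow\infty
\]
uniformly for $x\in\Omega'$. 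Corollary \ref{c2} then supplies a ground state, and the corresponding energy level is positive by the conclusion of Corollary \ref{c1} invoked in its proof.

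The main obstacle I anticipate is the choice of $\Omega'$ when $b$ is only assumed to lie in $L^\infty(\Omega)$: for a generic nonnegative $b$ there may be no open subset on which $b$ is essentially bounded below by a positive constant, so the uniform form of condition (4) becomes delicate. Under a mild additional regularity on $b$ (continuity, or lower semicontinuity together with $b\not\equiv 0$) the construction is straightforward. In full generality one can bypass the difficulty by weakening condition (4) to a pointwise statement in $x$ and recovering the uniformity in $u$ on weakly compact subsets of $Y$ via Fatou's lemma combined with the strong compactness in $L^p(\Omega)$ of weakly compact subsets of $W_0^{1,p}(\Omega)$.
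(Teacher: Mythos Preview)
Your approach is precisely the paper's: it treats this corollary as an immediate specialisation of Corollary~\ref{c2} with $g(x,t)=b(x)f(t)$, introducing it by the sentence ``In particular, for $g(x,u)=b(x)f(u)$ we obtain the following result'' and giving no separate proof.

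You have in fact been more careful than the paper. The tension you flag---that to make $g\equiv 0$ on $\Omega\setminus\Omega'$ one needs $b=0$ a.e.\ there, while the uniform version of condition~(4) in Corollary~\ref{c2} requires an essential positive lower bound for $b$ on $\Omega'$---is real and is silently ignored in the text. Your proposed workarounds (a mild regularity assumption on $b$, or dropping the $x$-uniformity and recovering what is actually used, namely $\int_\Omega b(x)F(tu)/t^p\to\infty$ uniformly on weakly compact subsets of $Y$, via Fatou's lemma and the compact embedding $W_0^{1,p}(\Omega)\hookrightarrow L^p(\Omega)$) both close the gap cleanly. Note also that the hypothesis $b\not\equiv 0$ is tacitly needed for the conclusion to be nonvacuous; the paper makes it explicit in the neighbouring Corollary~\ref{c22} but omits it here.
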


\begin{cor}\label{c21}
	Let $b_1,b_2 \in L^{\infty}(\Omega)$ with $b_1,b_2 \geq 0$ and $b_1b_2 \equiv 0$, and $r_1 \in (p,p^*)$, $r_2 \in (q,p^*)$ with $r_1 \geq r_2$. Then the problem \begin{equation}
	-div \left(a(|\nabla u|^p)|\nabla u|^{p-2}\nabla u\right) = b_1(x)|u|^{r_1-2}u - b_2(x)|u|^{r_2-2}u, \quad u \in W_0^{1,p}(\Omega),
	\end{equation}
	has a positive ground state level.
\end{cor}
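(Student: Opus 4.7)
The plan is to apply Corollary \ref{c1} to the decomposition $\Phi = I_0 - I$ on $X = W_0^{1,p}(\Omega)$ given by
\begin{equation*}
I_0(u) := \frac{1}{p}\int_\Omega A(|\nabla u|^p) + \frac{1}{r_2}\int_\Omega b_2|u|^{r_2}, \qquad I(u) := \frac{1}{r_1}\int_\Omega b_1|u|^{r_1},
\end{equation*}
together with the open cone $Y := \{u \in X : \int_\Omega b_1|u|^{r_1} > 0\}$. The disjointness $b_1 b_2 \equiv 0$ is what makes this splitting natural: the coercive perturbation $\int b_2|u|^{r_2}$ is absorbed into $I_0$ while the destabilizing superlinear term is isolated inside $I$, so that $Y = \{I > 0\}$ is precisely the cone on which $\varphi_u$ develops the mountain-pass profile demanded by Corollary \ref{c1}.

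Items $(1)$--$(3)$ of Corollary \ref{c1} are routine consequences of (A1), (A3), and the compact embeddings $X\hookrightarrow L^{r_1}(\Omega),\,L^{r_2}(\Omega)$: they provide the weak lower semicontinuity of $I_0$ and of $u\mapsto I_0'(u)u$ and the complete continuity of $I'$; they yield $I_0'(u)u\ge k_0\|u\|^p$ and $\|I'(u)\|_{X^*}=O(\|u\|^{r_1-1})=o(\|u\|^{p-1})$ with $\eta:=p$; and since $\partial Y\subset\{u:\int_\Omega b_1|u|^{r_1}=0\}$, both $I$ and $u \mapsto I'(u)u/r_1$ vanish on $\partial Y$.

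For item $(4)$ I would pick $\sigma := \max\{p, r_2\}$, which lies in $[p, r_1)$ whenever $r_1 > r_2$. With this choice, (A2) together with $\sigma\ge p$ makes the $a$-contribution to $I_0'(tu)u/t^{\sigma-1}$ nonincreasing in $t$, while the $b_2$-contribution $t^{r_2-\sigma}\int_\Omega b_2|u|^{r_2}$ is nonincreasing because $r_2\le\sigma$; meanwhile $I'(tu)u/t^{\sigma-1}=t^{r_1-\sigma}\int_\Omega b_1|u|^{r_1}$ is strictly increasing on $Y$ since $r_1>\sigma$. The asymptotic comparison in $(4)$ follows from (A1) and $r_1 > \sigma \ge p$.

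Finally, to upgrade the critical level $c$ to the ground state level, I would note that on $X\setminus Y$ with $u\neq 0$ the disjointness gives $I'(u)u=\int_\Omega b_1|u|^{r_1}=0$, whereas $I_0'(u)u\ge k_0\|u\|^p>0$, which is exactly the supplementary hypothesis of Corollary \ref{c1}. The main obstacle is the borderline case $r_1=r_2$, in which the admissible interval $[\max\{p,r_2\},r_1)$ for $\sigma$ collapses; I would handle it separately by recombining the two terms into the sign-changing coefficient $b_1-b_2$, applying Corollary \ref{c1} to $\Phi=\frac{1}{p}\int_\Omega A(|\nabla u|^p)-\frac{1}{r_1}\int_\Omega(b_1-b_2)|u|^{r_1}$ with $\sigma=p$ and cone $\tilde Y=\{u:\int_\Omega(b_1-b_2)|u|^{r_1}>0\}$, where items $(1)$--$(4)$ are verified in exactly the same manner.
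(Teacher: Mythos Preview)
Your proposal is correct and follows essentially the same route as the paper: the same splitting $I_0,I$ and cone $Y$ in the case $r_1>r_2$, and the same recombination with the sign-changing weight $b_1-b_2$ when $r_1=r_2$. Your choice $\sigma=\max\{p,r_2\}$ is in fact slightly sharper than the paper's stated choice $\sigma=r_2$: when $q<r_2<p$ the $a$-contribution $t^{p-r_2}\int_\Omega a(t^p|\nabla u|^p)|\nabla u|^p$ is not nonincreasing (since $a\ge k_0>0$ by (A1)), so $\sigma=r_2$ would not verify condition~(4) of Corollary~\ref{c1}, whereas your $\sigma$ does.
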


\begin{proof}
	Let $X=W_0^{1,p}(\Omega)$ and  \[
	Y=
	\begin{cases}
	\{u \in X: \int_\Omega b_1|u|^{r_1}>0\} & \mbox{ if } r_1 > r_2,\\
	\{u \in X: \int_\Omega (b_1-b_2)|u|^{r}>0\} & \mbox{ if }r_1 = r_2=r.
	\end{cases}
	\]
	If $r_1>r_2$ then we  apply Corollary \ref{c1} with 
	\begin{equation}
	I_0(u):=\frac{1}{p}\displaystyle\int_{\Omega}A(|\nabla u|^p)+\int_\Omega b_2|u|^{r_2}\quad \mbox{and} \quad I(u):=\int_\Omega b_1|u|^{r_1}.
	\end{equation}
	Indeed, since $b_2 \geq 0$ we see that Corollary \ref{c1} applies with $\eta=p$ and $\sigma=r_2$.
	If $r_1=r_2=r$ then  Corollary \ref{c1} applies with 
	$$
	I_0(u):=\frac{1}{p}\displaystyle\int_{\Omega}A(|\nabla u|^p)\quad \mbox{and} \quad I(u):=\int_\Omega b|u|^{r},
	$$
	where $b=b_1-b_2$. Since $p<r<p^*$ we see that conditions (2) and (4) of Corollary \ref{c1} hold with $r=\sigma=p$. 
\end{proof}

\begin{cor}\label{c22}
	Let $b^+ \not \equiv 0$ and $r \in (p,p^*)$. Then the problem \begin{equation}
	-div \left(a(|\nabla u|^p)|\nabla u|^{p-2}\nabla u\right) = b(x)|u|^{r-2}u, \quad u \in W_0^{1,p}(\Omega),
	\end{equation}
	has a positive ground state level.
\end{cor}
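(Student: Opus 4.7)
The plan is to apply Corollary \ref{c1} with $X = W_0^{1,p}(\Omega)$,
$$I_0(u) := \frac{1}{p}\int_\Omega A(|\nabla u|^p), \qquad I(u) := \frac{1}{r}\int_\Omega b|u|^r,$$
and the open cone $Y := \left\{u \in X : \int_\Omega b|u|^r > 0\right\}$, which is nonempty because $b^+ \not\equiv 0$. Since $r<p^*$, the embedding $X \hookrightarrow L^r(\Omega)$ is compact, so the map $u\mapsto \int_\Omega b|u|^r$ is weakly continuous on $X$. In particular the weak boundary $\partial Y$ is contained in $\{u : \int_\Omega b|u|^r=0\}$, and condition (3) of Corollary \ref{c1} is immediate, since $I(u)=I'(u)u/r=0$ there.

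Conditions (1)--(2) are verified exactly as in the proof of Corollary \ref{c2}: by (A3), $I_0$ and $u\mapsto I_0'(u)u$ are weakly lower semicontinuous, while the compact embedding yields the complete continuity of $I'$. By (A1), $I_0'(u)u = \int_\Omega a(|\nabla u|^p)|\nabla u|^p \geq k_0\|u\|^p$; and since $|I'(u)v| \leq C\|u\|^{r-1}\|v\|$ with $r>p$, we get $I'(u) = o(\|u\|^{p-1})$ as $u\to 0$. Thus (2) holds with $\eta = p$.

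For condition (4), I would take $\sigma = p$. By (A2),
$$\frac{I_0'(tu)u}{t^{p-1}} = \int_\Omega a(t^p|\nabla u|^p)|\nabla u|^p$$
is nonincreasing in $t>0$, whereas
$$\frac{I'(tu)u}{t^{p-1}} = t^{r-p}\int_\Omega b|u|^r$$
is strictly increasing for $u\in Y$, as $r>p$. Integrating (A1) gives $A(s) \leq k_1(s + \tfrac{p}{q}s^{q/p})$, so that $I_0(tu)/t^p$ stays bounded as $t\to\infty$ (using $q\leq p$), while $I(tu)/t^p \to \infty$ uniformly on weakly compact subsets of $Y$, because the weakly continuous map $u\mapsto \int_\Omega b|u|^r$ attains a positive infimum on any such subset.

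Finally, for $u \in X\setminus Y$ with $u\neq 0$ we have $I'(u)u = \int_\Omega b|u|^r \leq 0 < k_0\|u\|^p \leq I_0'(u)u$. The last assertion of Corollary \ref{c1} then gives that $c>0$ is the ground state level. The only mildly subtle point is the identification of the \emph{weak} boundary of $Y$, but the compactness of the Sobolev embedding $X\hookrightarrow L^r(\Omega)$ handles it at once; everything else is routine from (A1)--(A3).
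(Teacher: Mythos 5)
Your proof is correct and is essentially the paper's argument: the paper obtains this corollary from Corollary \ref{c21} with $b_1=b^+$, $b_2=b^-$ and $r_1=r_2=r$, which in that case is exactly the application of Corollary \ref{c1} you carry out directly, with the same $I_0$, $I$, cone $Y=\{u:\int_\Omega b|u|^r>0\}$ and $\eta=\sigma=p$. Your verifications of (1)--(4), of the weak boundary of $Y$ via the compact embedding, and of the final sign condition on $X\setminus Y$ all match the paper's reasoning in Corollaries \ref{c2} and \ref{c21}.
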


\begin{proof}
	It follows from the previous corollary with $\alpha=\beta$, $b_1=b^+$ and $b_2=b^-$.\\
\end{proof}
\medskip

Next we deal with the $(p,q)$-Laplacian problem 
	\begin{equation}\label{pqr}
	-\Delta_pu- \Delta_q u=\lambda |u|^{\theta-2}u+b(x)|u|^{r-2}u, \quad u\in W_0^{1,p}(\Omega).\\
	\end{equation}
	\medskip
where $1<q<p$,  $\lambda \in \Re$, and $1<\theta,r<p^*$.
 If $r<q$ then we set
\begin{equation*}
\lambda^*=\inf\left\{\alpha(u):  u\in X, \int b|u|^r>0\right\},
\end{equation*}
where
\begin{equation*}
\alpha(u)=\frac{\int |\nabla u|^p}{\int|u|^p}+\frac{q-r}{p-q}\left(\frac{p-q}{p-r}\right)^{\frac{p-r}{q-r}}\frac{\left( \int |\nabla u|^q\right)^{\frac{p-r}{q-r}}}{\int |u|^p\left(\int b|u|^r\right)^{\frac{p-q}{q-r}}}
\end{equation*}
and, if $r>p$,
\begin{equation*}
\lambda^{**}=\inf\left\{\beta(u): u\in X, \int b|u|^r>0 \right\},
\end{equation*}
where
\begin{equation*}
\beta(u)=\frac{\int |\nabla u|^q}{\int|u|^q}+\frac{r-p}{r-q}\left(\frac{p-q}{r-q}\right)^{\frac{p-q}{r-p}}\frac{\left(\int |\nabla u|^p\right)^{\frac{r-q}{r-p}}}{ \int |u|^q\left(\int b|u|^r\right)^{\frac{p-q}{r-p}}}.
\end{equation*}

Both $\alpha(u)$ and $\beta(u)$ can be obtained as the unique solution, with respect to the variable $(t,\lambda)$, of the equations $\varphi_u'(t)=\varphi_u''(t)=0$. They are called nonlinear Rayleigh's quotients and $\lambda^*,\lambda^{**}$ are called extremal parameters (see \cite{I1}).\\

\begin{cor}
Let $b^+ \not \equiv 0$ and $1<r<q<p=\theta$. 
\begin{enumerate}
\item If $\lambda\le \lambda_1(p)$ then \eqref{pqr} has a ground state solution at a positive level.
\item If $\lambda_1(p)<\lambda<\lambda^*$ then \eqref{pqr} has a a ground state solution at a negative level, and a second solution.
\end{enumerate}
\end{cor}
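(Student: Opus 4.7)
The plan is to apply Corollary \ref{cc} with the splitting
\[
P(u):=\int_\Omega|\nabla u|^q,\qquad \Upsilon(u):=-\int_\Omega b\,|u|^r,\qquad \Gamma(u):=\int_\Omega\bigl(|\nabla u|^p-\lambda|u|^p\bigr),
\]
of respective homogeneity degrees $\kappa=q$, $\upsilon=r$, $\gamma=p$, which match the required order $\upsilon<\kappa<\gamma$ since $r<q<p$. The associated cones are $Y_1=\{u\in X:\int_\Omega b|u|^r>0\}$, nonempty because $b^+\not\equiv 0$, and $Y_2=\{u\in X:\int_\Omega|\nabla u|^p<\lambda\int_\Omega|u|^p\}$, nonempty precisely when $\lambda>\lambda_1(p)$; hence $Y_2=\emptyset$ in case (1) whereas $Y_2\neq\emptyset$ in case (2).

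Regarding the hypotheses of Corollary \ref{cc}, the regularity and weak lower semicontinuity of $P,\Upsilon,\Gamma$ together with condition (HJ) follow from standard Sobolev arguments, using the compact embedding $W_0^{1,p}(\Omega)\hookrightarrow L^r(\Omega)$ and the Radon--Riesz property of $L^p$. The coercivity $P(u)\ge C^{-1}\|u\|^\kappa$ does not hold for the $W_0^{1,p}$-norm, so I would invoke Remark \ref{rcc}(2) and verify boundedness of minimizing sequences directly. On $\mathcal{N}\cap Y_1$ the identity
\[
\Phi(u)=\bigl(\tfrac{1}{p}-\tfrac{1}{r}\bigr)\int_\Omega\bigl(|\nabla u|^p-\lambda|u|^p\bigr)+\bigl(\tfrac{1}{q}-\tfrac{1}{r}\bigr)\int_\Omega|\nabla u|^q
\]
holds; combined with the second-order condition $\varphi_u''(1)\ge 0$ at a local minimum, this forces $\int_\Omega|\nabla u_n|^q$ and $\int_\Omega(|\nabla u_n|^p-\lambda|u_n|^p)$ to be bounded along any minimizing sequence, and a blow-up argument with $w_n=u_n/\|u_n\|$ then rules out $\|u_n\|\to\infty$, since the bound on $\int_\Omega|\nabla u_n|^q$ forces any weak limit $w$ to vanish while $\int_\Omega|w|^p>0$.

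The key step is (H5) on $Y_1\cap Y_2$. For fixed $u\in Y_1$, a direct computation shows that the system $\varphi_u'(t)=\varphi_u''(t)=0$ has a unique solution in $(t,\lambda)$, whose $\lambda$-component is precisely the nonlinear Rayleigh quotient $\alpha(u)$ from the excerpt. Consequently, for $u\in Y_1\cap Y_2$, the fibering map $\varphi_u$ has exactly two nondegenerate critical points $0<t_u<s_u$---a local minimum and a local maximum---if and only if $\lambda<\alpha(u)$. Since we assume $\lambda<\lambda^*=\inf_{u\in Y_1}\alpha(u)$, this inequality holds throughout $Y_1\cap Y_2$, so (H5) is fulfilled.

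Corollary \ref{cc} then yields that $c_1=\inf_{\mathcal{N}\cap Y_1}\Phi$ is a critical value of $\Phi$, achieved by a local minimizer; this provides the ground state in case (1), where $Y_2=\emptyset$. In case (2), the non-emptiness of $Y_2$ delivers a second critical point via the second part of Corollary \ref{cc}. The main difficulty will be the explicit verification of (H5), which requires the extremal-parameter computation and a careful analysis of the fibering maps.
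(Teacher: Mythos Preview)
Your overall strategy matches the paper's for case~(2): both apply Corollary~\ref{cc} with exactly the splitting $P=\|\nabla\cdot\|_q^q$, $\Upsilon=-\int_\Omega b|\cdot|^r$, $\Gamma=\|\cdot\|^p-\lambda\|\cdot\|_p^p$, and both invoke Remark~\ref{rcc}(2) to bypass the failure of $P(u)\ge C\|u\|^q$. The differences are in how that bypass is executed, and in how case~(1) is handled.

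For case~(1) the paper does \emph{not} go through Corollary~\ref{cc}: it argues directly that $\Phi$ is coercive on $X$ for $\lambda\le\lambda_1(p)$ (with a short blow-up argument at $\lambda=\lambda_1(p)$), so the ground state is simply the global minimum. Your route via Corollary~\ref{cc} with $Y_2=\emptyset$ also works and has the virtue of treating both cases uniformly; the paper's argument is more elementary. (In either approach the level is negative---``positive'' in the statement appears to be a misprint, as the paper's own proof concludes with a negative global minimum.)

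For the boundedness of minimizing sequences, the paper's mechanism is different from yours: it shows that every sublevel set $\{u\in\mathcal N:\Phi(u)\le-\varepsilon\}$ lies in some $X_k=\{u:\|u\|^p<k\|u\|_p^p\}$, and then cites \cite[Lemma~9]{T}, a Poincar\'e-type inequality giving $\|\nabla u\|_q\ge C\|u\|$ on $X_k$; this yields $P\ge C\|\cdot\|^q$ along the sequence, exactly what Remark~\ref{rcc}(2) asks for. Your blow-up argument is a legitimate self-contained alternative, but it has a gap as written: the inequality $\varphi_{u_n}''(1)\ge0$ is only guaranteed at \emph{local-minimum} points of the fibering map, whereas $\mathcal N\cap Y_1$ also contains the local-maximum points $s_uu$ when $u\in Y_1\cap Y_2$. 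You should state explicitly that, since $\Phi(t_uu)<\Phi(s_uu)$, any minimizing sequence may be replaced by one lying on the local-minimum branch. You also need to address boundedness of the minimizing sequence for $d$ (the second critical value); the paper does this by noting $Y_2=X_\lambda$, so \cite[Lemma~9]{T} applies directly on $Y_2$.
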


\begin{proof}
The energy functional is given by
\begin{equation*}
\Phi(u)=\frac{1}{p}\left(\|u\|^p-\lambda  \|u\|_p^p\right)+\frac{1}{q}\|\nabla u\|_q^q-\frac{1}{r}\int b|u|^r,
\end{equation*}
so that
$J(u)=\| u\|^p-\lambda  \|u\|_p^p+ \|\nabla u\|_q^q-\int_\Omega b|u|^r$.

Let us first show that $\Phi$ is coercive if $\lambda \leq \lambda_1(p)$. This is clear if $\lambda < \lambda_1(p)$. Let $\lambda = \lambda_1(p)$ and $(u_n) \subset X$ with $\|u_n\| \to \infty$. If $\Phi(u_n)$ is bounded from above then we can assume that $v_n:=\frac{u_n}{\|u_n\|} \rightharpoonup v_0$ and, since $p>q>r$, we deduce that
$$\| v_n\|^p-\lambda_1(p)  \|v_n\|_p^p \to 0 \quad \mbox{and} \quad \|\nabla v_n\|_q \to 0.$$
The second assertion yields $v=0$, whereas the first one shows that $v_n \to 0$ in $X$, a contradiction. Therefore $\Phi$ is coercive and has a negative global minimum.

 Let us take now $\lambda_1(p)<\lambda<\lambda^*$.
We apply Corollary \ref{cc}  with
$$P(u)=\|\nabla u\|_q^q, \quad \Upsilon(u)=-\int_\Omega b(x)|u|^r, \quad \mbox{and} \quad \Gamma(u)=\| u\|^p-\lambda  \|u\|_p^p.$$
Hence $$Y_1=\left\{u\in X: \int_\Omega b(x)|u|^r>0 \right\} \quad \mbox{and} \quad Y_2=\left\{u\in X: \| u\|^p-\lambda  \|u\|_p^p<0\right\}.$$
We claim that $P(u)\geq C\|u\|^q$ holds along any minimizing sequence for $c$. To this end, we use \cite[Lemma 9]{T} and Poincar\'e's inequality, which implies the desired inequality in $X_k:=\left\{u\in X: \| u\|^p- k \|u\|_p^p<0\right\}$, for any $k>0$. Since $c<0$ it is enough to show that for a given $\varepsilon>0$ we have $\{u \in \mathcal{N}: \Phi(u) \leq -\varepsilon\} \subset X_k$ for some $k>0$. On the contrary, we find a sequence $(u_n) \subset \mathcal{N}$ such that $\| u_n\|^p-n  \|u_n\|_p^p\ge 0$ and $\Phi(u_n) \leq -\varepsilon$ for every $n$. Thus $\|u_n\|_p^p \leq \frac{1}{n}\|u_n\|^p$ and consequently
$$\left(1-\frac{\lambda}{n}\right) \|u_n\|^p\leq \| u_n\|^p-\lambda \|u_n\|_p^p<\int_\Omega b|u_n|^r\leq C\|u_n\|^r.$$
Since $p>r$ we deduce that $(u_n)$ is bounded, so we may assume that $u_n \rightharpoonup u$ in $X$. But $\|u_n\|_p^p \leq \frac{1}{n}\|u_n\|^p$ implies that $u=0$, which contradicts $\Phi(u)\leq \liminf \Phi(u_n) \leq -\varepsilon$. 
Therefore the claim is proved and the inequality $P(u)\geq C\|u\|^q$ also holds in $Y_2$. Note also that $J$ satisfies (HJ) since $\Gamma$ does so.
Finally, (H5) holds since $\lambda<\lambda^*$, so Corollary \ref{cc} yields the desired conclusion.

\end{proof}

\begin{cor}
	Let $b^+ \not \equiv 0$ and $1<q=\theta<p<r<p^*$. 
	\begin{enumerate}
		\item If $\lambda\le \lambda_1(q)$ then \eqref{pqr} has a ground state solution at a positive level.
		\item If $\lambda_1(q)<\lambda<\lambda^{**}$ then \eqref{pqr} has a ground state solution at a negative level, and a second solution.
	\end{enumerate}
\end{cor}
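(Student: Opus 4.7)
The energy functional for \eqref{pqr} with $\theta=q$ on $X:=W_0^{1,p}(\Omega)$ is
$$
\Phi(u)=\frac{1}{p}\|u\|^p+\frac{1}{q}\bigl(\|\nabla u\|_q^q-\lambda\|u\|_q^q\bigr)-\frac{1}{r}\int_\Omega b|u|^r,
$$
whose three leading terms have homogeneity degrees $q<p<r$. This matches Corollary \ref{cc} with $\upsilon=q$, $\kappa=p$, $\gamma=r$, setting
$$
P(u)=\|u\|^p,\quad \Upsilon(u)=\|\nabla u\|_q^q-\lambda\|u\|_q^q,\quad \Gamma(u)=-\int_\Omega b|u|^r.
$$
The cones $Y_1=\{\Upsilon<0\}$ and $Y_2=\{\Gamma<0\}$ are nonempty precisely when $\lambda>\lambda_1(q)$ and $b^+\not\equiv 0$, respectively. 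The growth bounds needed by Corollary \ref{cc} follow from Sobolev embeddings (in particular $W_0^{1,p}(\Omega)\hookrightarrow W_0^{1,q}(\Omega)$), and (HJ) is satisfied because $J-P$ is weakly continuous.

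For part (1), when $\lambda\leq\lambda_1(q)$ the variational characterization of $\lambda_1(q)$ yields $\Upsilon\geq 0$ on $X$, so $Y_1=\emptyset$ and Corollary \ref{cc} does not apply. I would instead split $\Phi=I_0-I$ with
$$
I_0(u):=\tfrac{1}{p}\|u\|^p+\tfrac{1}{q}\Upsilon(u)\geq \tfrac{1}{p}\|u\|^p,\qquad I(u):=\tfrac{1}{r}\int_\Omega b|u|^r,
$$
and apply Corollary \ref{c1} on $Y:=Y_2=\{u:\int_\Omega b|u|^r>0\}$ with $\eta=\sigma=p$. The checks are routine: $I_0'(u)u\geq\|u\|^p$ on $\overline{Y}$; $I'(u)=O(\|u\|^{r-1})=o(\|u\|^{p-1})$ since $r>p$; both $I$ and $I'(u)u$ vanish on $\partial Y$; because $\Upsilon\geq 0$ and $q<p$, the map $t\mapsto I_0'(tu)u/t^{p-1}=\|u\|^p+t^{q-p}\Upsilon(u)$ is nonincreasing, while $t\mapsto I'(tu)u/t^{p-1}=t^{r-p}\int_\Omega b|u|^r$ is strictly increasing on $Y$; the asymptotic ratios satisfy $I_0(tu)/t^p\to \|u\|^p/p<\infty=\lim I(tu)/t^p$. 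Finally $I'(u)u\leq 0<I_0'(u)u$ on $X\setminus(Y\cup\{0\})$, so Corollary \ref{c1} furnishes a ground state at the positive level $c>0$.

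For part (2), with $\lambda_1(q)<\lambda<\lambda^{**}$, both cones are nonempty and I would apply Corollary \ref{cc}. The crucial step, and the main obstacle, is verifying (H5): for every $u\in Y_1\cap Y_2$ the fibering map $\varphi_u$ should have exactly two nondegenerate critical points, a local minimum followed by a local maximum. Factoring
$$
\varphi_u'(t)=t^{q-1}h(t),\qquad h(t):=t^{p-q}\|u\|^p+\Upsilon(u)-t^{r-q}\int_\Omega b|u|^r,
$$
one has $h(0^+)=\Upsilon(u)<0$, $h(\infty)=-\infty$, and $h$ admits a unique interior maximum $t_*$ obtained by solving $h'(t_*)=0$ explicitly. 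Hence $\varphi_u'$ has two simple zeros if and only if $h(t_*)>0$. A direct computation substituting $t_*$ into $h(t_*)=0$ identifies the degeneracy locus $\{h(t_*)=h'(t_*)=0\}$, in the variable $\lambda$, with $\lambda=\beta(u)$. Therefore $\lambda<\lambda^{**}=\inf_{\{u:\int b|u|^r>0\}}\beta(u)$ forces $h(t_*)>0$, and (H5) follows. Corollary \ref{cc} then delivers the ground state at negative level coming from $\mathcal{N}\cap Y_1$ (which is the absolute ground state by that corollary), together with a second critical point produced by the $Y_2$ branch.
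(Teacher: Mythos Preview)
Your approach coincides with the paper's: for $\lambda\le\lambda_1(q)$ apply Corollary~\ref{c1} on $Y=Y_2$ with the splitting $I_0=\frac{1}{p}\|u\|^p+\frac{1}{q}\Upsilon$, $I=\frac{1}{r}\int_\Omega b|u|^r$, and for $\lambda_1(q)<\lambda<\lambda^{**}$ apply Corollary~\ref{cc} with $P=\|\cdot\|^p$, $\Upsilon$, $\Gamma$ as you wrote. Your verification of (H5) via the auxiliary function $h$ is more explicit than the paper's one-line reference to $\lambda^{**}$, but the content is the same.

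One inaccuracy: your justification of (HJ) is not correct as stated. You assert that $J-P=\Upsilon+\Gamma$ is weakly continuous, but $\Upsilon(u)=\|\nabla u\|_q^q-\lambda\|u\|_q^q$ is only weakly lower semicontinuous on $W_0^{1,p}(\Omega)$, since the embedding $W_0^{1,p}(\Omega)\hookrightarrow W_0^{1,q}(\Omega)$ is not compact. The conclusion (HJ) nevertheless holds: if $u_n\rightharpoonup u$ and $J(u_n)\to J(u)$, then subtracting the weakly continuous terms $-\lambda\|u_n\|_q^q$ and $\Gamma(u_n)$ gives $\|u_n\|^p+\|\nabla u_n\|_q^q\to\|u\|^p+\|\nabla u\|_q^q$; since each summand is weakly lower semicontinuous, both must converge separately, whence $\|u_n\|\to\|u\|$ and uniform convexity yields $u_n\to u$. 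With this correction the proof goes through.
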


\begin{proof}

We consider the functional
\begin{equation*}
\Phi(u)=\frac{1}{p}\|u\|^p+\frac{1}{q}\left(\| \nabla u\|_q^q-\lambda  \|u\|_q^q\right)-\frac{1}{r}\int_\Omega b|u|^r,
\end{equation*}
and the open cones $$Y_1=\left\{u \in X: \| \nabla u\|_q^q-\lambda  \|u\|_q^q<0\right\}\quad
\text{ and }
\quad Y_2=\left\{u\in X: \int_\Omega b(x)|u|^r>0 \right\}.$$
Note that
$
J(u)=\|u\|^p+\| \nabla u\|_q^q-\lambda  \|u\|_q^q-\int_\Omega b|u|^r
$
and
$$\Phi(u)=\frac{r-p}{rp}\|u\|^p +\frac{r-q}{rq}\left(\| \nabla u\|_q^q-\lambda  \|u\|_q^q\right) \geq C_1\|u\|^p -C_2\|u\|^q, \ \forall u \in \mathcal{N},$$
i.e. $\Phi$ is coercive on $\mathcal{N}$.\\

For $\lambda \leq \lambda_1(q)$ we apply Corollary \ref{c1} with $Y=Y_2$. Indeed, note that $\Phi=I_0-I$ with 
$$I_0(u)=\frac{1}{p}\|u\|^p+\frac{1}{q}\left(\| \nabla u\|_q^q-\lambda  \|u\|_q^q\right)\quad
\text{and}
\quad I(u)=\frac{1}{r}\int b|u|^r.$$
Since $\lambda \leq \lambda_1(q)$ we have $I_0'(u)u \geq \|u\|^p$ for all $u \in X$.
Thus conditions (1)-(4) of Corollary \ref{c1} clearly hold. Moreover, $I'(u)u \leq 0$ for $u \in X \setminus Y_2$, 
so $c_2:=\displaystyle \inf_{\mathcal{N}\cap Y_2}\Phi$ is the ground state level of $\Phi$.\\

Finally,  one may easily see that  $$P(u)=\|u\|^p, \quad \Upsilon(u)=\Upsilon_\lambda(u)=\| \nabla u\|_q^q-\lambda  \|u\|_q^q \quad \mbox{ and } \quad \Gamma(u)=-\int_\Omega b|u|^{r}$$
satisfy the conditions of Corollary \ref{cc} for any $\lambda_1(q)<\lambda<\lambda^{**}$. Indeed, in this case $Y_1,Y_2$ are nonempty since $\lambda>\lambda_1(q)$ and $b^+ \not \equiv 0$, respectively. Moreover, 
(H5) is satisfied since $\lambda<\lambda^{**}$.
Thus $c_1:=\displaystyle \inf_{\mathcal{N}\cap Y_1}\Phi$ is the ground state level of $\Phi$, which has a second critical point that belongs to $Y_2$.
\end{proof}
\medskip
\subsection{Problems involving a $(p,q)$-Laplacian operator with spatial dependence}\strut\\

Let $D,E \subset \Omega$ be two disjoint smooth subdomains with $\overline{D \cup E}=\overline{\Omega}$.
We consider the problem 
\begin{equation}\label{mr}
-\chi_D \Delta u - \chi_E \Delta_p u=b(x)|u|^{r-2} u, \quad u \in X,
\end{equation}
where  $X=\{u \in H_0^1(\Omega): \nabla u \in L^p(E)\}$ and $\chi$ is the characteristic function.

 This problem has been investigated in \cite{MR} with $b \equiv \lambda>0$ and $2<r<p$. 
 We shall deal here with the case $2<p<r<2^*$.
 
 \begin{cor}
Let $b^+ \not \equiv 0$ and $2<p<r<2^*$. Then \eqref{mr} has a ground state solution at a positive level.
 \end{cor}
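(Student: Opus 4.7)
The plan is to apply Theorem \ref{thm2} to the energy functional
$$\Phi(u) = \frac{1}{2}\int_D |\nabla u|^2 + \frac{1}{p}\int_E |\nabla u|^p - \frac{1}{r}\int_\Omega b(x)|u|^r$$
defined on $X = \{u \in H_0^1(\Omega) : \nabla u \in L^p(E)\}$ with norm $\|u\|_X := \|u\|_{H_0^1(\Omega)} + \|\nabla u\|_{L^p(E)}$, taking the open cone
$$Y = \Bigl\{u \in X : \int_\Omega b(x)|u|^r > 0\Bigr\},$$
which is nonempty since $b^+ \not\equiv 0$. Writing $\Phi = I_0 - I$ with the obvious splitting, the compact embedding $X \hookrightarrow L^r(\Omega)$ (granted by $r < 2^*$) makes $I$ weakly continuous, while $I_0$ is convex and continuous; in particular both $\Phi$ and $J$ are weakly lower semicontinuous.

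First I would verify (H1) on $Y$. Factoring $\varphi_u'(t) = t\,\psi(t)$ with
$$\psi(t) = \int_D |\nabla u|^2 + t^{p-2}\int_E |\nabla u|^p - t^{r-2}\int_\Omega b|u|^r,$$
the derivative $\psi'$ vanishes at most once (because $p < r$), so $\psi$ is non-negative at $0^+$, strictly positive nearby, and then strictly decreases to $-\infty$; hence $\varphi_u$ has exactly one positive critical point, a global maximum. Moreover, for any $u \in X \setminus Y$ with $u \neq 0$ the inequality $\int_\Omega b|u|^r \leq 0$ yields $J(u) \geq \int_D |\nabla u|^2 + \int_E |\nabla u|^p > 0$, so $\mathcal{N} \subset Y$ and the relative ground state produced by Theorem \ref{thm2} will coincide with the absolute one.

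The main work is to verify the compactness $(HY)_c$, which I would obtain via Lemma \ref{l1}. For condition (1) I take $\sigma = p$: on a weakly compact $K \subset Y$ the functional $u \mapsto \int_\Omega b|u|^r$ is weakly continuous and strictly positive, hence bounded below by some $\delta > 0$, so the $-t^{r-p}$ term dominates and $\Phi(tu)/t^p \to -\infty$ uniformly on $K$. For condition (2) the uniform coercivity $I_0(tv) \to \infty$ on $\mathcal{S} \cap Y$ is delicate because $I_0$ mixes a quadratic and a $p$-th power term; the embedding $L^p(E) \hookrightarrow L^2(E)$ (bounded $E$, $p > 2$) however forces some $c_0 > 0$ with $\max\{\int_D |\nabla v|^2,\ \int_E |\nabla v|^p\} \geq c_0$ on the unit sphere, so $I_0(tv) \geq c_0 \min\{t^2/2,\, t^p/p\} \to \infty$ uniformly. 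For condition (3) the crucial estimate is $I_0'(u)u \geq C\|u\|_X^p$ for small $u$, which I would extract from $\|u\|_X \leq C[I_0'(u)u]^{1/2} + C[I_0'(u)u]^{1/p}$ by noting that the second summand dominates when $I_0'(u)u \leq 1$ (since $p > 2$); combined with $|I(u)| \leq C\|u\|_X^r$ and $r > p$, this gives $J(u) \geq \tfrac{C}{2}\|u\|_X^p > 0$ in $Y \cap B(0,R)$ for $R$ small enough.

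The obstacle I anticipate is precisely this mixed scaling between the two pieces of $I_0$: the functional $I_0$ does not admit any single-power lower bound $I_0'(u)u \geq C\|u\|_X^\eta$ on all of $\overline{Y}$, which prevents a direct application of Corollary \ref{c1}. That is why I would verify Lemma \ref{l1} by hand, handling the regimes $\|u\|_X \leq 1$ and $\|u\|_X \geq 1$ separately when bounding $\|u\|_X$ against $I_0'(u)u$. Once $(HY)_c$ is in place, Theorem \ref{thm2} delivers a critical point at level $c > 0$, and the inclusion $\mathcal{N} \subset Y$ established above upgrades it to an absolute ground state.
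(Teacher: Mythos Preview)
Your proposal is correct and follows essentially the same route as the paper: both apply Theorem~\ref{thm2} on the cone $Y=\{u:\int_\Omega b|u|^r>0\}$ and verify $(HY)_c$ through Lemma~\ref{l1}, checking (H1), the uniform blow-up of $\Phi(tu)/t^\sigma$, the uniform coercivity of $I_0$ on the unit sphere, and $J>0$ near the origin and on $\partial Y\setminus\{0\}$. The only differences are cosmetic---the paper takes $\sigma\in(p,r)$ rather than $\sigma=p$, and handles the sphere estimate via the elementary inequality $(x^2+y^p)^{1/p}\ge C(x+y)$ for $x,y\in[0,1]$ instead of your $\max$-argument---but the logical skeleton is identical.
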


\begin{proof}
 We set $\|u\|=\|\nabla u\|_{2,D}+\|\nabla u\|_{p,E}$ and observe that $X$ equipped with this norm is a reflexive Banach space. Moreover, $\|\cdot \|$ is equivalent to the norm given by $\|\nabla u\|_{2,\Omega}+\|\nabla u\|_{p,E}$, see \cite[Lemma 2.1]{MR}.

The energy functional is given by
$$\Phi(u)=\frac{1}{2}\int_D |\nabla u|^2 +\frac{1}{p}\int_E |\nabla u|^p - \frac{1}{r}\int_\Omega b(x)|u|^r,$$
for $u \in X$.
Let $Y=\left\{u\in X: \int_\Omega b(x)|u|^r>0 \right\}$.
The condition $r>p>2$ clearly implies (H1) for any $u \in Y$.
Note also that if $p<\sigma<r$ then
$$\frac{\Phi(tu)}{t^\sigma} =-t^{r-\sigma}\int_\Omega b(x)|u|^r+o(1)\to -\infty$$ as $t \to \infty$, uniformly for $u$ on weakly compact subsets of $Y$.
Moreover,
$\Phi=I_0-I$ with
$I_0(u)=\frac{1}{2}\int_D |\nabla u|^2 +\frac{1}{p}\int_E |\nabla u|^p$ and $I(u)=\frac{1}{r}\int_\Omega b(x)|u|^r$, which is weakly continuous since $r<2^*$. If $u \in  S$ then, for $t>0$ large enough, we have
$$I_0(tu)\geq \frac{t^2}{2} \left(\|\nabla u\|_{2,D}^2+\|\nabla u\|_{p,E}^p\right)\geq Ct^2\|u\|^p=Ct^2$$
where we have used the inequality 
$ (x^2+y^p)^{\frac{1}{p}}\geq C(x+y)$, which holds for $x,y \in [0,1]$ and some $C>0$.
Finally, it is clear that $J>0$ on $\partial Y$ and since $\|\cdot \|$ is equivalent to $\|\nabla u\|_2+\|\nabla u\|_{p,E}$, we see that $J>0$
on $B(0,R) \setminus \{0\}$ if $R>0$ is small enough. Lemma \ref{l1} and Theorem \ref{thm2} yield the desired conclusion.
\end{proof}
\medskip
\subsection{Kirchhoff problems}\strut\\

Let us deal now with the problem
\begin{equation}\label{pk}
-M\left(\int_{\Omega} |\nabla u|^2\right) \Delta u =g(x,u), \quad u \in H_0^1(\Omega).
\end{equation}
Throughout this subsection $M:[0,\infty)\rightarrow
[0,\infty)$ is an increasing ${C}^{1}$ function such that
 $M(0):=m_0>0$ and  $t\mapsto\displaystyle\frac{M(t)}{t}$ is decreasing in $(0,\infty)$.
We set  $\hat{M}(t):=\displaystyle\int_{0}^{t}M(s)\ ds$ for $t \ge 0$. 

\begin{cor}\label{c3}
	Assume that $g(x,t)=0$ for every $x \in \Omega \setminus \Omega'$, where $\Omega'$ is an open subset of $\Omega$. Moreover, assume that:
	
	\begin{enumerate}
		\item $\dis \lim_{t \rightarrow 0} \frac{g(x,t)}{t}=0$ uniformly for $x \in \Omega'$.
		\item $\dis \lim_{t \rightarrow \infty} \frac{G(x,t)}{t^4}=\infty $ uniformly for $x \in \Omega'$.
		\item There exists $C>0$ and $r \in (4,6)$ such that $|g(x,t)|\le C (1+|t|^{r-1})$ for any $x \in \Omega'$ and $t \in \Re$.
		\item For any $x \in \Omega'$ the map $t\mapsto  \frac{g(x,t)}{t^{3}}$ is increasing.
	\end{enumerate}
	Then \eqref{pk}	has a positive ground state level.
\end{cor}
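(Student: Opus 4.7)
I would derive this from Corollary~\ref{c1}. Working in $X=H_0^1(\Omega)$ with the standard norm, I would take
\[ I_0(u):=\tfrac{1}{2}\hat M(\|u\|^2), \qquad I(u):=\int_\Omega G(x,u),\]
so that $\Phi=I_0-I$ is the energy functional of \eqref{pk}, and use the cone $Y:=\{u\in X:\ u\not\equiv 0 \text{ in }\Omega'\}$, whose boundary consists of $X$-functions vanishing a.e.\ on $\Omega'$. The plan is then to verify the hypotheses of Corollary~\ref{c1} with $\eta=2$ and $\sigma=4$, together with $J>0$ on $(X\setminus Y)\setminus\{0\}$.

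Conditions (1) and (3) of Corollary~\ref{c1} are essentially automatic: weak lower semicontinuity of $I_0$ and of $u\mapsto I_0'(u)u=M(\|u\|^2)\|u\|^2$ comes from the monotonicity of $\hat M$ and of $s\mapsto M(s)s$ (since $M$ is increasing); complete continuity of $I'$ follows from the compact Sobolev embedding $H_0^1\hookrightarrow L^r$ (since $r<2^*=6$); and (3) holds because $g\equiv 0$ outside $\Omega'$ forces $I(u)=I'(u)u=0$ whenever $u\equiv 0$ on $\Omega'$. For (2), the bound $I_0'(u)u\ge m_0\|u\|^2$ is immediate from $M(0)=m_0>0$, and hypotheses (1), (3) give $|g(x,t)|\le \varepsilon|t|+C_\varepsilon|t|^{r-1}$, which combined with $r>4$ yields $I'(u)=o(\|u\|)$ as $u\to 0$.

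For the fibering condition (4), I would use that $s\mapsto M(s)/s$ is decreasing to show $t\mapsto I_0'(tu)u/t^3=M(t^2\|u\|^2)\|u\|^2/t^2$ is nonincreasing, and that hypothesis~(4) directly gives $t\mapsto I'(tu)u/t^3$ increasing. Since $s\mapsto M(s)/s$ is bounded on bounded intervals, $\hat M(s)=O(s^2)$ and hence $\limsup_{t\to\infty} I_0(tu)/t^4<\infty$. The delicate point---which I expect to be the main obstacle---is the uniform limit $I(tu)/t^4\to\infty$ on weakly compact $K\subset Y$. My plan is to exploit that compactness of $H_0^1\hookrightarrow L^4$ makes $K$ norm-compact in $L^4(\Omega)$; if $\inf_{u\in K}\int_{\Omega'}|u|^4=0$, one could extract $u_n\in K$ with $u_n\rightharpoonup u_\infty\in K$ and $u_\infty\equiv 0$ on $\Omega'$, contradicting $K\subset Y$. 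Thus $\inf_{u\in K}\int_{\Omega'}|u|^4>0$, and hypothesis~(2) then yields the uniform divergence via a Fatou-type argument.

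Finally, for any $u\in X\setminus Y$ with $u\neq 0$, one has $u\equiv 0$ on $\Omega'$, so $I'(u)u=0<m_0\|u\|^2\le I_0'(u)u$, giving $J(u)>0$. Corollary~\ref{c1} then delivers a critical point realizing $c:=\inf_{\mathcal{N}\cap Y}\Phi>0$, which is the ground state level of $\Phi$.
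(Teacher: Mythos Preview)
Your proposal is correct and follows essentially the same route as the paper: the same choice of $X$, $I_0$, $I$, cone $Y$, and the same verification of Corollary~\ref{c1} with $\eta=2$, $\sigma=4$. The only noteworthy difference is that you verify the original hypothesis $I'(u)=o(\|u\|)$ of Corollary~\ref{c1} directly (via $|g(x,t)|\le \varepsilon|t|+C_\varepsilon|t|^{r-1}$), whereas the paper appeals to Remark~\ref{rc1} and cites \cite{FRQ} to obtain the weaker substitute $\liminf_{u\to 0}\Phi'(u)u/\|u\|^2>0$; your argument is self-contained and arguably cleaner here. One small wording slip: you write that $M(s)/s$ is ``bounded on bounded intervals'', but in fact $M(s)/s\to\infty$ as $s\to 0^+$; what you actually need (and what the paper uses) is that $M(s)/s$ is decreasing, hence $M(s)\le C(1+s)$ and $\hat M(s)=O(s^2)$ as $s\to\infty$.
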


\begin{proof}
The energy functional is given by $\Phi=I_0-I$, where
$$I_0(u):=\frac{1}{2}\hat{M}\left(\|u\|^2\right) \quad
\text{and}
\quad I(u):=\int_\Omega G(x,u),$$
 for $u \in X=H_0^1(\Omega)$.
 Note also that $$I_0'(u)u=M(\|u\|^2)\|u\|^2 \quad
 \text{and}
 \quad I'(u)u= \int_\Omega g(x,u)u.$$ Since $\hat{M}$ is convex, we see that $I_0$ and $u \mapsto I_0'(u)u$ are weakly lower semicontinuous, whereas (3) implies that $I'$ is strongly continuous. Since $M$ is increasing we have 
 $I_0'(u)u\geq m_0\|u\|^2$ for every $u \in X$.
 
Let $Y:=\{u \in X: u \not \equiv 0 \text{ in } \Omega'\}$.
Note that $I(u)=I'(u)u=0$ for every $u \in Y$.
Since $\frac{M(t)}{t}$ is decreasing we find that $M(t)\leq C(1+t)$ for some $C>0$ and all $t\geq 0$. It follows that
$\frac{I_0(tu)}{t^4}\leq C_1\|u\|^4+C_2t^{-2}\|u\|^2$, while
$\frac{I(tu)}{t^4}=\int_\Omega \frac{G(x,tu)}{t^4} \to \infty$
as $t \to \infty$, uniformly on weakly compact subsets of $Y$. Since $M$ and $t\mapsto  \frac{g(x,t)}{t^{3}}$ are increasing, the maps $t \mapsto \frac{I_0'(tu)u}{t^{\sigma-1}}$ and $t \mapsto \frac{I'(tu)u}{t^{\sigma-1}}$ are increasing.  

Finally, arguing as in the proof of \cite[Corollary 2.4]{FRQ} one can show that $\liminf_{u \to 0} \frac{\Phi'(u)u}{\|u\|^2}>0$.
Corollary \ref{c1} and Remark \ref{rc1} yield the desired conclusion.\\
\end{proof}

The next results are similar to Corollaries \ref{c21} and \ref{c22}, so we omit their proof:

\begin{cor}
	Let $b_1,b_2 \in L^{\infty}(\Omega)$ with $b_1,b_2 \geq 0$ and $b_1b_2 \equiv 0$, and $r_1 \in (4,6)$, $r_2 \in (2,6)$ with $r_1 \geq r_2$. Then the problem \begin{equation}
	-M\left(\int_{\Omega} |\nabla u|^2\right) \Delta u = b_1(x)|u|^{r_1-2}u - b_2(x)|u|^{r_2-2}u, \quad u \in H_0^1(\Omega),
	\end{equation}
	has a positive ground state level.
\end{cor}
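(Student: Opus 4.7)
The plan is to apply Corollary \ref{c1} along the same lines as the proof of Corollary \ref{c21}, but using the Kirchhoff setup from Corollary \ref{c3}. Since $b_2\ge 0$, the term $-b_2|u|^{r_2-2}u$ on the right-hand side contributes a \emph{positive} quantity to the energy, so it can be safely absorbed into $I_0$. As in Corollary \ref{c21}, the construction branches according to whether $r_1>r_2$ or $r_1=r_2$.

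In the case $r_1>r_2$, I would set
$$I_0(u):=\frac{1}{2}\hat{M}(\|u\|^2)+\frac{1}{r_2}\int_\Omega b_2|u|^{r_2},\qquad I(u):=\frac{1}{r_1}\int_\Omega b_1|u|^{r_1},$$
and take the open cone $Y:=\{u \in X: \int_\Omega b_1|u|^{r_1}>0\}$ in $X=H_0^1(\Omega)$. In the case $r_1=r_2=r$ I would instead use
$$I_0(u):=\frac{1}{2}\hat{M}(\|u\|^2),\qquad I(u):=\frac{1}{r}\int_\Omega (b_1-b_2)|u|^{r},$$
with $Y:=\{u \in X: \int_\Omega (b_1-b_2)|u|^{r}>0\}$. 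Using $b_1,b_2\ge 0$ and $b_1b_2\equiv 0$, on the weak boundary of $Y$ we have $I(u)=I'(u)u=0$ (in the first case this requires the pointwise identity $b_1 u=0$ a.e., which follows from $\int_\Omega b_1|u|^{r_1}=0$ and $b_1\ge 0$), which is condition (3) of Corollary \ref{c1}; moreover $I'(u)u\le 0 < I_0'(u)u$ for every $u\in X\setminus Y$ with $u\neq 0$, so the infimum $c$ produced by Corollary \ref{c1} will be the ground state level.

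The verification of conditions (1)--(4) of Corollary \ref{c1} mirrors the proof of Corollary \ref{c3}. The compact embedding $H_0^1(\Omega)\hookrightarrow L^{r_i}(\Omega)$ (valid since $r_i<6=2^*$) and the convexity of $\hat M$ yield (1); condition (2) holds with $\eta=2$, since $I_0'(u)u\ge m_0\|u\|^2$ (using $b_2\ge 0$ in the first case) and $I'(u)=O(\|u\|^{r_1-1})=o(\|u\|)$ as $u\to 0$ by $r_1>2$. For (4) I would choose $\sigma=\max(4,r_2)$ in the first case and $\sigma=4$ in the second; the hypothesis $r_1\in(4,6)$ together with $r_1\ge r_2$ then guarantees $\sigma<r_1$, so that both the uniform limit $I(tu)/t^\sigma\to\infty$ on weakly compact subsets of $Y$ (where $\int_\Omega b_1|u|^{r_1}$ is bounded below by a positive constant) and the strict monotonicity of $t\mapsto I'(tu)u/t^{\sigma-1}$ follow immediately.

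The step I expect to demand the most care is the verification that
$$\frac{I_0'(tu)u}{t^{\sigma-1}}=\frac{M(t^2\|u\|^2)\,\|u\|^2}{t^{\sigma-2}}+t^{r_2-\sigma}\int_\Omega b_2|u|^{r_2}$$
is nonincreasing (with the $b_2$-term absent in case 2). Writing the first summand as $\|u\|^4\cdot(M(s)/s)\cdot t^{4-\sigma}$ with $s=t^2\|u\|^2$, one sees it is nonincreasing precisely when $\sigma\ge 4$, using that $M(s)/s$ is decreasing by hypothesis; the second summand is nonincreasing precisely when $\sigma\ge r_2$. The simultaneous compatibility of $\sigma\ge 4$, $\sigma\ge r_2$, and $\sigma<r_1$ is exactly what the assumption $r_1\in(4,6)$ with $r_1\ge r_2$ delivers. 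Once conditions (1)--(4) are in place, Corollary \ref{c1} yields a positive ground state.
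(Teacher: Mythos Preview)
Your proposal is correct and follows exactly the approach the paper intends: it omits the proof, saying only that it is ``similar to Corollaries \ref{c21} and \ref{c22}'', and your argument is precisely the adaptation of the proof of Corollary \ref{c21} to the Kirchhoff setting of Corollary \ref{c3}. Your choice $\sigma=\max(4,r_2)$ in the case $r_1>r_2$ is the right refinement (the Kirchhoff term forces $\sigma\ge 4$ while the $b_2$-term forces $\sigma\ge r_2$), and the remaining verifications are accurate.
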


\begin{cor}
Let $b^+ \not \equiv 0$  and $r \in (4,6)$.	Then the problem \begin{equation}
-M\left(\int_{\Omega} |\nabla u|^2\right) \Delta u =b(x)|u|^{r-2}u, \quad u \in H_0^1(\Omega),
\end{equation}
has a positive ground state level.
\end{cor}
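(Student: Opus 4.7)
The plan is simply to reduce this statement to the preceding corollary by splitting the weight $b$ into its positive and negative parts. Set $b_1 := b^+$ and $b_2 := b^-$, both of which lie in $L^\infty(\Omega)$, are nonnegative, and satisfy $b_1 b_2 \equiv 0$ since $b^+$ and $b^-$ have disjoint supports. The hypothesis $b^+ \not\equiv 0$ is exactly the nontriviality condition needed on $b_1$ so that the relevant cone
$$Y = \{u \in H_0^1(\Omega) : \textstyle\int_\Omega b_1 |u|^{r_1} > 0\}$$
is nonempty in the previous corollary's proof.

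Next, I would take $r_1 = r_2 = r$. Since the hypothesis gives $r \in (4,6)$, this choice satisfies $r_1 \in (4,6)$, $r_2 \in (2,6)$, and the monotonicity $r_1 \ge r_2$ required by the previous corollary. With these choices, the pointwise decomposition $b(x)|u|^{r-2}u = b^+(x)|u|^{r-2}u - b^-(x)|u|^{r-2}u = b_1(x)|u|^{r_1-2}u - b_2(x)|u|^{r_2-2}u$ shows that the nonlinearity on the right-hand side of the problem is exactly of the form treated previously.

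Therefore, the previous corollary applies verbatim and produces a critical point of the associated energy functional at a positive ground state level, which corresponds to a positive ground state solution of the Kirchhoff problem. There is no genuine obstacle in this argument: the only thing to verify is the compatibility of the parameters and sign conditions, and each of those has been built into the statement. In this sense the proof is purely a matter of bookkeeping, mirroring the way Corollary \ref{c22} is deduced from Corollary \ref{c21} in the $(p,q)$-Laplacian setting.
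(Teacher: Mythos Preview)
Your proof is correct and follows exactly the approach the paper intends: the paper omits the proof, stating only that it is similar to Corollary~\ref{c22}, whose proof consists precisely in setting $b_1=b^+$, $b_2=b^-$, and $r_1=r_2=r$ in the preceding corollary.
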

\medskip

\section{Applications to systems of pdes}
\medskip

This final section is devoted to the applications of our results to some elliptic systems. In this regard, let us mention that the Nehari manifold and the fibering method have been extensively exploited. We refer to \cite{AH,BM,BW1,SM,Ws} for results on gradient type systems, and to \cite{BMR,E1,E2,S} for strongly coupled systems that can be reduced to a fourth-order equation.

Recall that $b \in L^{\infty}(\Omega)$,  $g:\Omega \times \Re \to \Re$ and $f:\Re \to \Re$ are continuous, and  $G(x,t):=\displaystyle\int_{0}^{t}g(x,s)\ ds$ and $F(t):=\displaystyle\int_{0}^{t}f(s)\ ds$ for $t \in \Re$.\\
\subsection{A semilinear gradient system}\label{kayeexample1} \strut\\

Consider the system of equations

\begin{equation}\label{pk3}
\left\{
\begin{array}{ll}
-\Delta u = \lambda v+b(x)|u|^{r-2}u \
& \mbox{in} \ \ \Omega, \ \ \\
-\Delta v =  \lambda u-|v|^{q-2}v \
&\mbox{in} \ \ \Omega, \ \ \\
u = v = 0 \ &\mbox{on} \ \
\partial\Omega,
\end{array}
\right.
\end{equation}
where $\lambda \in \Re$ and $2<q<r<2^*$.
We deal with  $X:=H_0^1(\Omega)\times H_0^1(\Omega)$, endowed with the norm $\|(u,v)\|=\left(\|\nabla u\|_{2}^2+\|\nabla v\|_{2}^2\right)^{\frac{1}{2}}$. We also set $\|(u,v)\|_2=\left(\| u\|_{2}^2+\| v\|_{2}^2\right)^{\frac{1}{2}}$. 

Let us set
\begin{equation*}
\lambda^*:=\inf\left\{\lambda(u,v): \int_\Omega b|u|^r>0,\ \int_\Omega uv>0 \right\},
\end{equation*}
where
\begin{equation*}
\lambda(u,v):=\frac{1}{2 \int uv} \left(\|(u,v)\|^2+ C_{r,q}\frac{\|v\|_q^{q\frac{r-2}{r-q}}}{\left(\int_\Omega b|u|^r dx\right)^{\frac{q-2}{r-q}}}\right),
\mbox{ and } 
C_{r,q}:=\frac{r-q}{r-2}\left(\frac{q-2}{r-2}\right)^{\frac{q-2}{r-q}}.
\end{equation*}
It is not hard to show that $\lambda^*$ is achieved, and consequently $\lambda^*>\lambda_1$.

\begin{cor}\label{ws}\strut
	\begin{enumerate}
		\item If $\lambda<\lambda_1$ then \eqref{pk3} has a ground state solution (at a positive level).
		\item If $\lambda\in(\lambda_1,\lambda^*)$ then \eqref{pk3} has a ground state solution, which is a local minimizer (at a negative level), and a second critical point.
	\end{enumerate}	
\end{cor}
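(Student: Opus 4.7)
I will set $X=H_0^1(\Omega)\times H_0^1(\Omega)$ and write the energy functional associated with the system as
\[
\Phi(u,v)=\frac{1}{2}\Upsilon(u,v)+\frac{1}{q}P(u,v)+\frac{1}{r}\Gamma(u,v),
\]
where $P(u,v)=\|v\|_q^q$, $\Upsilon(u,v)=\|(u,v)\|^2-2\lambda\int_\Omega uv$ and $\Gamma(u,v)=-\int_\Omega b|u|^r$ are $q$-, $2$- and $r$-homogeneous, so $\Phi$ matches the template \eqref{fcc1} with $\upsilon=2<\kappa=q<\gamma=r$. The relevant cones are $Y_1=\{\Upsilon<0\}$ and $Y_2=\{(u,v):\int_\Omega b|u|^r>0\}$; the compact embeddings $H_0^1\hookrightarrow L^2,L^q,L^r$ give the weak (lower) semicontinuity of $P,\Upsilon,\Gamma$ and (HJ), since $J-\|\cdot\|^2$ is a weakly continuous perturbation. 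On $\mathcal{N}$ one has $\Phi=\frac{r-2}{2r}\Upsilon+\frac{r-q}{rq}\|v\|_q^q$, a formula I will use repeatedly.

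For part (1), under the natural interpretation $|\lambda|<\lambda_1$, Poincar\'e's inequality yields $\Upsilon(u,v)\ge(1-|\lambda|/\lambda_1)\|(u,v)\|^2>0$, so $Y_1=\emptyset$. I will apply Corollary \ref{c1} with $Y=Y_2$, $I_0=\frac{1}{2}\Upsilon+\frac{1}{q}P$ and $I=\frac{1}{r}\int_\Omega b|u|^r$, taking $\eta=2$ and $\sigma=q$. The monotonicity and limit requirements in Corollary \ref{c1}(4) are routine (the limit is uniform on weakly compact subsets of $Y_2$ because $\int_\Omega b|u|^r$ is then bounded below by a positive constant), and the closing inequality $I'(u,v)(u,v)=\int_\Omega b|u|^r\le 0<I_0'(u,v)(u,v)$ on $X\setminus Y_2$ is immediate, delivering a ground state at a positive level.

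For part (2) with $\lambda\in(\lambda_1,\lambda^*)$, $Y_1$ becomes nonempty (via $(\phi_1,\phi_1)$), and I will invoke Theorem \ref{tp2} on $Y_1$ for the local minimizer and Theorem \ref{tp3} on $Y_2$ for the second critical point. Writing $\varphi_{(u,v)}'(t)/t=\Upsilon+t^{q-2}\|v\|_q^q-t^{r-2}\int_\Omega b|u|^r$, in $Y_1\cap Y_2$ this is a unimodal function of $t$ starting at $\Upsilon<0$; solving $\varphi'=\varphi''=0$ jointly in $(t,\lambda)$ shows that its maximum value equals
\[
\Upsilon+C_{r,q}\,\|v\|_q^{q(r-2)/(r-q)}\Big(\int_\Omega b|u|^r\Big)^{-(q-2)/(r-q)},
\]
which is strictly positive iff $\lambda<\lambda(u,v)$; since $\lambda<\lambda^*\le\lambda(u,v)$, this delivers (H3) on $Y_1$ and (H4) on $Y_2$ (the cases $Y_1\setminus Y_2$ and $Y_2\setminus Y_1$ are trivial as $\varphi_{(u,v)}$ then has a unique critical point, respectively a global minimum or a global maximum). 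Since $\Phi\ge 0$ on $\mathcal{N}\setminus Y_1$, any negative minimum in $\mathcal{N}\cap Y_1$ will be the ground state level.

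The delicate point is verifying $(HY)_{c_1}$ and $(HY)_d$, since $P(u,v)=\|v\|_q^q$ does not dominate $\|(u,v)\|^q$ and one cannot invoke the automatic coercivity of Corollary \ref{cc}'s proof. I will handle this by a blow-up argument: if $(u_n,v_n)\in\mathcal{N}\cap Y_1$ is a minimizing sequence with $t_n=\|(u_n,v_n)\|\to\infty$, set $\tilde{w}_n=(u_n,v_n)/t_n$; dividing $\Phi=\frac{r-2}{2r}\Upsilon+\frac{r-q}{rq}\|v\|_q^q$ by $t_n^2$ forces $t_n^{q-2}\|\tilde{v}_n\|_q^q\to 0$ (as $\Upsilon(\tilde{w}_n)$ is bounded and $q>2$), hence $\tilde{v}_n\to 0$ in $L^q$; compactness then yields $\int_\Omega\tilde{u}_n\tilde{v}_n\to 0$, so $\Upsilon(\tilde{w}_n)\to 1$, contradicting $\Upsilon<0$ on $Y_1$. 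Once boundedness is established, $\|v_n\|_q^q\to\|v\|_q^q$ forces $\Upsilon(u_n,v_n)$ to converge to some $L\le\frac{2r}{r-2}c_1<0$, and weak lower semicontinuity of $\Upsilon$ places the weak limit in $Y_1$. An analogous blow-up using the local-max nondegeneracy $(r-2)\Upsilon(v_n)+(r-q)\|v_n^{(2)}\|_q^q>0$ at $v_n=s_{(u_n,v_n)}(u_n,v_n)$ yields $(HY)_d$, whereupon Theorem \ref{tp3} supplies the second critical point.
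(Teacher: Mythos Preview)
Your proof is correct and, for part (1), coincides with the paper's argument (Corollary \ref{c1} applied on $Y_2$ with the same $I_0,I$).  Your caveat about $|\lambda|<\lambda_1$ is a fair reading of what the simple Poincar\'e argument actually yields.

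For part (2), however, you take a different route from the paper, and your key premise is mistaken.  You assert that ``$P(u,v)=\|v\|_q^q$ does not dominate $\|(u,v)\|^q$ and one cannot invoke the automatic coercivity of Corollary \ref{cc}'s proof.''  The paper shows precisely the opposite: one has the a priori bound
\[
\|v\|_q^q \ \ge\ C\,\|(u,v)\|^q \qquad \text{for all } (u,v)\in \overline{Y}_1,
\]
whose proof is a two-line normalization argument (if it fails, normalize to $\|(\bar u_n,\bar v_n)\|=1$ with $\bar v_n\to 0$ in $L^q$; then the constraint $\Upsilon\le 0$ forces $1\le 2\lambda\int_\Omega \bar u_n\bar v_n\to 0$).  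With this bound in hand, Corollary \ref{cc} (together with Remark \ref{rcc}(2)) applies verbatim with $P,\Upsilon,\Gamma$ as you defined them, delivering both the negative-level local minimizer and the second critical point in one stroke.

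Your alternative---verifying (H3), (H4), $(HY)_{c_1}$ and $(HY)_d$ by hand and then appealing to Theorems \ref{tp2} and \ref{tp3}---also works, and your blow-up for $(HY)_{c_1}$ is essentially a sequence-level version of the same contradiction (you derive $\|\tilde v_n\|_q\to 0$ and then $\Upsilon(\tilde w_n)\to 1$).  The difference is that the paper proves the inequality once and for all on the whole cone $\overline{Y}_1$, so that the abstract machinery of Corollary \ref{cc} handles both critical points uniformly, whereas your approach has to repeat compactness arguments for $c_1$ and for $d$ separately and leaves the $(HY)_d$ step as a sketch.  Neither approach is wrong, but the paper's is shorter and more reusable.
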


\begin{proof}
The energy functional associated to \eqref{pk3} is given by 
$$
\Phi(u,v)= \frac{1}{2}\|(u,v)\|^2-\lambda\int_\Omega uv 
+\frac{1}{q}\|v\|_q^q-\frac{1}{r}\int_\Omega b|u|^r, \quad \mbox{ for } (u,v)\in X.
$$
Thus
$$J(u,v)=\|(u,v)\|^2-2\lambda \int_\Omega uv 
+\|v\|_q^q-\int_\Omega b|u|^r.$$

We introduce the cones
$$Y_1=\left\{(u,v)\in X:\|(u,v)\|^2-2\lambda\int_\Omega uv<0\right\}\quad \mbox{and} \quad Y_2=\left\{(u,v)\in X: \int_\Omega b|u|^r> 0\right\}.$$

\begin{enumerate}
\item  Let  $\lambda<\lambda_1$.	
Since $2\int_\Omega uv \leq \|u\|_2^2+\|v\|_2^2$ we see that 
there exists $C>0$ such that 
\begin{equation}\label{K0}
\|(u,v)\|^2-2\lambda\int_\Omega uv\ge C\|(u,v)\|^2,\ \quad \forall (u,v)\in X. 
\end{equation}
Thus Corollary \ref{c1} applies with $Y=Y_2$,
$$I_0(u):= \frac{1}{2}\|(u,v)\|^2-\lambda\int_\Omega uv 
+\frac{1}{q}\|v\|_q^q\quad \mbox{and} \quad I(u):=\frac{1}{r}\int_\Omega b|u|^r.$$

\item	Let us take now  $\lambda_1<\lambda<\lambda^*$. It follows that $Y_1$ is nonempty since $(\varphi_1,\varphi_1) \in Y_1$.
We claim that there exists $C>0$ such that $$\|v\|_q^q \geq C\|(u,v)\|^q \quad \forall (u,v) \in \overline{Y}_1.$$ Otherwise we find a sequence $(u_n,v_n) \subset \overline{Y}_1$ such that
$\|v_n\|_q^q \leq \frac{1}{n}\|(u_n,v_n)\|^q$. We may assume that $(\bar{u}_n,\bar{v}_n):=\frac{(u_n,v_n)}{\|(u_n,v_n)\|} \rightharpoonup (u,v)$ in $X$.
It follows that $\|\bar{v}_n\|_q \to 0$ i.e. $v=0$ and from $(u_n,v_n) \subset \overline{Y}_1$ we infer that
$\|(u_n,v_n)\|^2 \le 2\lambda \int u_nv_n,$
i.e. $1 \le 2\lambda \int \bar{u}_n\bar{v}_n \to 0$, which is a contradiction.
Thus the claim is proved an since $\lambda<\lambda^*$, we see that (H5) holds. Finally, it is clear that $J$ satisfies (HJ) for any $u$. So Corollary \ref{cc} applies with
$$P(u,v)=\|v\|_q^q, \quad \Upsilon(u,v)=\|(u,v)\|^2-2\lambda\int_\Omega uv, \quad \mbox{and} \quad \Gamma(u,v)=-\int_\Omega b|u|^r.$$
\end{enumerate}
\end{proof}

\medskip

\subsection{A quasilinear gradient system}\strut \\

The previous results on  \eqref{pk3} can be easily extended to the quasilinear system
\begin{equation}\label{sp}
\left\{
\begin{array}{ll}
-\Delta_p u = \mu |u|^{p-2}u+\lambda \alpha|u|^{\alpha-2}u|v|^{\beta}+b(x)|u|^{r-2}u \
& \mbox{in} \ \ \Omega, \ \ \\
-\Delta_p v =  \delta |v|^{p-2}v +\lambda \beta |u|^{\alpha}|v|^{\beta-2}v-|v|^{q-2}v \
&\mbox{in} \ \ \Omega, \ \ \\
u = v = 0 \ &\mbox{on} \ \
\partial\Omega,
\end{array}
\right.
\end{equation}
where $\mu,\delta \in \Re$, $\lambda>0$, $1<p<q<r<p^*$, and $\alpha,\beta>1$ with $\alpha+\beta=p$.

The energy functional  is given by 
$$
\Phi(u,v)= \frac{1}{p}\int_\Omega\left( |\nabla u|^p-\mu |u|^p+|\nabla v|^p-\delta |v|^p\right)-\lambda\int_\Omega |u|^{\alpha}|v|^{\beta} 
+\frac{1}{q}\|v\|_q^q-\frac{1}{r}\int_\Omega b|u|^r,
$$
for $(u,v)\in X=W_0^{1,p}(\Omega) \times W_0^{1,p}(\Omega)$.

We set  $\|(u,v)\|=\left(\|\nabla u\|_{p}^p+\|\nabla v\|_{p}^p\right)^{\frac{1}{p}}$ and $\|(u,v)\|_p=\left(\| u\|_{p}^p+\| v\|_{p}^p\right)^{\frac{1}{p}}$.

Let 
\begin{equation*}
\lambda_1^*(\mu,\delta)=\inf\left\{\frac{\|\nabla u\|_p^p+\|\nabla v\|_p^p-\mu \|u\|_p^p-\delta\|v\|_p^p}{p\int_\Omega |u|^{\alpha}|v|^{\beta}}: (u,v)\in X, \int_\Omega |u|^{\alpha}|v|^{\beta}>0 \right\},
\end{equation*}
and
\begin{equation*}
\lambda^*(\mu,\delta):=\inf\left\{\lambda(u,v,\mu,\delta): \int_\Omega b|u|^r>0,\ \int_\Omega |u|^{\alpha}|v|^{\beta}>0 \right\},
\end{equation*}
where
$$
\lambda(u,v,\mu,\delta):=\frac{1}{\int_\Omega |u|^{\alpha}|v|^{\beta}} \left(\|\nabla u\|_p^p+\|\nabla v\|_p^p-\mu \|u\|_p^p-\delta\|v\|_p^p+ C_{p,r,q}\frac{\|v\|_q^{q\frac{r-p}{r-q}}}{\left(\int_\Omega b|u|^r \right)^{\frac{q-p}{r-q}}}\right),
$$ and
$C_{p,r,q}:=\frac{r-q}{r-p}\left(\frac{q-p}{r-p}\right)^{\frac{q-p}{r-q}}$.

It is not hard to show that $\lambda_1^*(\mu,\delta)$ and $\lambda^*(\mu,\delta)$ are achieved, so that $\lambda^*(\mu,\delta)>\lambda_1^*(\mu,\delta)$.
In addition, $\frac{\lambda_1(p)-\max(\mu,\delta)}{\max(\alpha,\beta)}\leq \lambda_1^*(\mu,\delta)\leq \frac{1}{p}(2\lambda_1(p)-\mu-\delta)$. Note that if $p=2$, $\mu=\delta$, and $\alpha=\beta=1$ then $\lambda_1^*(\mu,\delta)=\lambda_1-\mu$.

Let
$$Y_1=\left\{(u,v)\in X:\|\nabla u\|_p^p+\|\nabla v\|_p^p-\mu \|u\|_p^p-\delta\|v\|_p^p-p\lambda\int_\Omega |u|^{\alpha}|v|^{\beta}<0\right\}$$ and $$Y_2=\left\{(u,v)\in X: \int_\Omega b|u|^r> 0\right\}.$$

For  $\mu,\delta<\lambda_1(p)$ and $0<\lambda<\lambda_1^*(\mu,\delta)$ there exists $C>0$ such that 
\begin{equation}
\|\nabla u\|_p^p+\|\nabla v\|_p^p-\mu \|u\|_p^p-\delta\|v\|_p^p-p\lambda\int_\Omega |u|^{\alpha}|v|^{\beta}\ge C\|(u,v)\|^p
\end{equation}
for all $(u,v)\in X$ such that $\int_\Omega |u|^{\alpha}|v|^{\beta}>0$.
Indeed, otherwise there is a sequence $(u_n,v_n)$ such that $\|(u_n,v_n)\|=1$, $\int |u_n|^{\alpha}|v_n|^{\beta}>0$ and 
$$\limsup \left(\|\nabla u_n\|_p^p+\|\nabla v_n\|_p^p-\mu \|u_n\|_p^p-\delta\|v_n\|_p^p-p\lambda\int |u_n|^{\alpha}|v_n|^{\beta}\right)\le 0$$
We can assume that $(u_n,v_n) \rightharpoonup (u,v)$ in $X$, so that
$\|u\|^p+\|v\|^p-\mu \|u\|_p^p-\delta\|v\|_p^p-p\lambda \int |u|^{\alpha}|v|^{\beta}\le 0$
and $(u,v)\neq (0,0)$. Thus
$$0<\|\nabla u\|_p^p+\|\nabla v\|_p^p-\mu \|u\|_p^p-\delta\|v\|_p^p\le p\lambda \int_\Omega |u|^{\alpha}|v|^{\beta},$$
so that $\int_\Omega |u|^{\alpha}|v|^{\beta}>0$, which implies $\lambda \ge \lambda_1^*(\mu,\delta)$.

Now, if $\lambda_1^*(\mu,\delta)<\lambda<\lambda^*(\mu,\delta)$ then $Y_1$ is nonempty and arguing as in the proof of Corollary \ref{ws}(2) one can show that  for $\mu<\lambda_1(p)$ 
there exists $C>0$ such that $\|v\|_q^q \geq C\|(u,v)\|^q$ for any $(u,v) \in Y_1$.

By Corollary \ref{cc} we obtain the following result:
\begin{cor}\strut
	\begin{enumerate}
		\item If $\mu,\delta<\lambda_1(p)$ and $\lambda<\lambda_1^*(\mu,\delta)$ then \eqref{sp} has a ground state solution (at a positive level)
		\item If $\mu<\lambda_1(p)$ and $\lambda_1^*(\mu,\delta)<\lambda<\lambda^*(\mu,\delta)$ then \eqref{sp} has a ground state solution, which is a local minimizer (at a negative level), and a second critical point.
	\end{enumerate}	
\end{cor}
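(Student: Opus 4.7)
The proof will mirror that of Corollary \ref{ws}, lifted to the quasilinear setting. Part (1) is handled by Corollary \ref{c1} applied to the cone $Y_2$, while part (2) is handled by Corollary \ref{cc} with the homogeneous decomposition suggested in the discussion preceding the statement.

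For part (1), assume $\mu,\delta<\lambda_1(p)$ and $\lambda<\lambda_1^*(\mu,\delta)$. Write $\Phi=I_0-I$ with
$$I_0(u,v):=\frac{1}{p}\bigl(\|\nabla u\|_p^p+\|\nabla v\|_p^p-\mu\|u\|_p^p-\delta\|v\|_p^p\bigr)-\lambda\int_\Omega|u|^\alpha|v|^\beta+\frac{1}{q}\|v\|_q^q,$$
and $I(u,v):=\frac{1}{r}\int_\Omega b|u|^r$. By Euler's identity one checks that $I_0'(u,v)(u,v)$ equals the same expression with $\frac{1}{p}$ replaced by $1$, $\lambda$ by $p\lambda$ and $\frac{1}{q}$ by $1$. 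The key step is to verify $I_0'(u,v)(u,v)\ge C\|(u,v)\|^p$ on $X$. Split into two cases: if $\int_\Omega|u|^\alpha|v|^\beta=0$, the cross term vanishes and $\mu,\delta<\lambda_1(p)$ combined with Poincar\'e's inequality yields the bound; if $\int_\Omega|u|^\alpha|v|^\beta>0$, the strict inequality $\lambda<\lambda_1^*(\mu,\delta)$ together with the fact that $\lambda_1^*(\mu,\delta)$ is achieved produces the same estimate. The remaining hypotheses of Corollary \ref{c1} (weak lower semicontinuity of $I_0$ and $u\mapsto I_0'(u)u$, complete continuity of $I'$, and the monotonicity and growth conditions in (4) with $\eta=p$ and $\sigma=r$) are verified as in Corollary \ref{c2}. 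Since $I'(u,v)(u,v)=\int_\Omega b|u|^r\le 0$ on $X\setminus Y_2$, the relative ground state is the absolute one.

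For part (2), assume $\mu<\lambda_1(p)$ and $\lambda_1^*(\mu,\delta)<\lambda<\lambda^*(\mu,\delta)$. Apply Corollary \ref{cc} with
$$P(u,v)=\|v\|_q^q,\ \ \Upsilon(u,v)=\|\nabla u\|_p^p+\|\nabla v\|_p^p-\mu\|u\|_p^p-\delta\|v\|_p^p-p\lambda\int_\Omega|u|^\alpha|v|^\beta,$$
and $\Gamma(u,v)=-\int_\Omega b|u|^r$, which are $q$-, $p$- and $r$-homogeneous respectively (so the roles of $\upsilon<\kappa<\gamma$ are played by $p<q<r$). Non-emptiness of $Y_1$ follows from $\lambda>\lambda_1^*(\mu,\delta)$ and of $Y_2$ from $b^+\not\equiv 0$. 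The inequality $P(u,v)\ge C\|(u,v)\|^q$ does not hold pointwise, but by Remark \ref{rcc}(2) it suffices to have it along minimizing sequences for $c$ and $d$. This is done exactly as in the second part of the proof of Corollary \ref{ws}: if some normalized minimizing sequence $(\bar u_n,\bar v_n)\rightharpoonup(u,v)$ satisfied $\bar v_n\to 0$ in $L^q$, then $v=0$, and the hypothesis $\mu<\lambda_1(p)$ would force $\Upsilon(u,0)=\|\nabla u\|_p^p-\mu\|u\|_p^p\ge 0$, contradicting $(u_n,v_n)\in Y_1$.

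The main obstacle is checking (H5), namely that for $(u,v)\in Y_1\cap Y_2$ the fibering map $\varphi_{(u,v)}$ has exactly two nondegenerate critical points, a local minimum and a local maximum. This is precisely the content of the condition $\lambda<\lambda^*(\mu,\delta)$: the extremal parameter was defined through the simultaneous system $\varphi'_{(u,v)}(t)=\varphi''_{(u,v)}(t)=0$, so for $\lambda$ strictly below $\lambda^*(\mu,\delta)$ the algebraic analysis of $\varphi'_{(u,v)}(t)=0$ (a generalized three-term balance with exponents $p<q<r$) produces two transversal zeros. Once (H5) is in hand, Corollary \ref{cc} delivers the local minimizer in $Y_1$ at a negative level (which is the absolute ground state since $\Phi>0$ on $\mathcal{N}\setminus Y_1$ by the coercivity estimate of part (1) adapted to $\Upsilon\ge 0$), and a second critical point coming from the maximum over $Y_2$.
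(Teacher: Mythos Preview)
Your approach is the same as the paper's: part (1) via Corollary \ref{c1} on the cone $Y_2$ (the paper states only ``By Corollary \ref{cc}'', which strictly speaking covers only part (2) since that corollary needs $Y_1\neq\emptyset$; your treatment of part (1) makes explicit what the paper leaves to the analogy with Corollary \ref{ws}), and part (2) via Corollary \ref{cc} with precisely the decomposition $(P,\Upsilon,\Gamma)$ you wrote. Two small repairs are needed.

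First, in part (1) the choice $\sigma=r$ does not verify condition (4) of Corollary \ref{c1}: since $I$ is exactly $r$-homogeneous, $I(t(u,v))/t^{r}$ is constant and does not tend to $\infty$. Take instead any $\sigma\in[q,r)$ (e.g.\ $\sigma=q$); then $t\mapsto I_0'(t(u,v))(u,v)/t^{\sigma-1}$ is nonincreasing, $t\mapsto I'(t(u,v))(u,v)/t^{\sigma-1}=t^{r-\sigma}\int_\Omega b|u|^r$ is increasing on $Y_2$, and $I(t(u,v))/t^{\sigma}\to\infty$ uniformly on weakly compact subsets of $Y_2$.

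Second, your contradiction for $P\ge C\|(u,v)\|^q$ on $Y_1$ stops one step too early. From $(\bar u_n,\bar v_n)\in Y_1$ and weak lower semicontinuity you only obtain $\Upsilon(u,0)\le 0$; combined with $\Upsilon(u,0)=\|\nabla u\|_p^p-\mu\|u\|_p^p\ge 0$ this gives $\Upsilon(u,0)=0$, hence $u=0$ since $\mu<\lambda_1(p)$ is strict, not a contradiction yet. The contradiction is obtained from the normalized inequality
\[
1=\|(\bar u_n,\bar v_n)\|^p<\mu\|\bar u_n\|_p^p+\delta\|\bar v_n\|_p^p+p\lambda\int_\Omega|\bar u_n|^\alpha|\bar v_n|^\beta\to 0,
\]
using the compact embedding $W_0^{1,p}(\Omega)\hookrightarrow L^p(\Omega)$ and $(\bar u_n,\bar v_n)\rightharpoonup (0,0)$. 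This is exactly the argument the paper gives in Corollary \ref{ws}(2) and then invokes for the quasilinear case by writing ``arguing as in the proof of Corollary \ref{ws}(2)''. Note that this proves the inequality on all of $\overline{Y}_1$, so your detour through Remark \ref{rcc}(2) is unnecessary.
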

\medskip
\subsection{A  strongly coupled system and a fourth-order equation}\label{sh}\strut \\

Let us consider the Hamiltonian type system
$$
\left\{
\begin{array}{ll}
-\Delta u = |v|^{p-2}v \
& \mbox{in} \ \ \Omega, \ \ \\
-\Delta v = g(x,u) \
&\mbox{in} \ \ \Omega, \ \ \\
u = v = 0 \ &\mbox{on} \ \
\partial\Omega,
\end{array}
\right.\leqno{(S)}
$$
with $p > 1$. We shall apply our results to the fourth-order equation derived from $(S)$ by the {\it reduction by inversion} procedure, namely
$$
\left\{
\begin{array}{ll}
\Delta \left(|\Delta u|^{\frac{2-p}{p-1}}\Delta u\right) = g(x,u) \
&\mbox{in} \ \ \Omega, \ \ \\
u = \Delta u = 0 \ &\mbox{on} \ \
\partial\Omega.
\end{array}
\right.\leqno{(E)}
$$
The energy functional for $(E)$ is given by
$$
\Phi(u) = \dfrac{p-1}{p} \displaystyle\int_{\Omega} |\Delta u|^{\frac{p}{p-1}}   -
\displaystyle \int_{\Omega}G(x,u),
$$
which is a $C^1$ functional on
 $X = W^{2, \frac{p}{p-1}}(\Omega)\cap W_{0}^{1,\frac{p}{p-1}}(\Omega)$,
endowed with the norm
$ \|u\| = \left(\int_{\Omega} |\Delta u|^{\frac{p}{p-1}}  \right)^{\frac{p-1}{p}}$.

\begin{cor}\label{ch}
		Assume that $g:\Omega \times \Re \to \Re$ is a continuous map such that $g(x,t)\equiv 0$ for every $x \in \Omega \setminus \Omega'$, where $\Omega'$ is an open subset of $\Omega$. Moreover, assume that
	\begin{enumerate}
		
		\item $\dis \lim_{t \rightarrow 0} \frac{g(x,t)}{|t|^{\frac{1}{p-1}}}=0$ uniformly for $x \in \Omega'$.
		
		\item For every $x \in \Omega'$ the map $t \mapsto \frac{g(x,t)}{|t|^{\frac{1}{p-1}}}$
		is increasing on $\Re \setminus \{0\}$.
		\item $\dis \lim_{|t| \to \infty}\frac{G(x,t)}{|t|^{\frac{p}{p-1}}}=\infty$ uniformly for  $x \in \Omega'$.
	
		 \item If $N \geq 3$ and $p > \frac{N}{N -2}$ then there exists $C > 0$ and $q \in ( \frac{p}{p-1}, \sigma)$ such that
		 $$|g(x,t)| \leq C\left(1+|t|^{q-1}\right) \quad \forall \, t \in \mathbb{R},$$
 where  $\sigma = \frac{Np}{N(p-1) - 2p}$.
	\end{enumerate}
	Then $(E)$ has a positive ground state level.
\end{cor}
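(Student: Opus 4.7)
The plan is to apply Corollary~\ref{c1} (in conjunction with Remark~\ref{rc1}) to the decomposition $\Phi = I_0 - I$ with
\begin{equation*}
I_0(u) := \frac{p-1}{p}\|u\|^{p/(p-1)}, \qquad I(u) := \int_\Omega G(x,u),
\end{equation*}
on the open cone $Y := \{u \in X : u \not\equiv 0 \text{ in } \Omega'\}$, choosing $\eta = \sigma = p/(p-1)$. This is the natural generalization of the strategies used in Corollaries~\ref{c2} and \ref{c3}: the $(p,q)$-Laplacian is replaced by the higher-order operator $\Delta(|\Delta u|^{(2-p)/(p-1)}\Delta u)$, but the underlying fibering geometry is the same.

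First I would verify the structural hypotheses of Corollary~\ref{c1}. The functional $I_0$ is a power of the norm, hence convex and weakly lower semicontinuous, and the same holds for $u \mapsto I_0'(u)u = \|u\|^{p/(p-1)}$. Complete continuity of $I'$ follows from the compactness of the Sobolev embedding $X \hookrightarrow L^q(\Omega)$, which is automatic when $N\le 2$ or $p \le N/(N-2)$ and is guaranteed by hypothesis~(4) in the critical regime. The inequality $I_0'(u)u \ge \|u\|^{p/(p-1)}$ is in fact an equality, and the bound $\|I'(u)\|_{X^*} = o(\|u\|^{1/(p-1)})$ as $u \to 0$ follows from hypothesis~(1), which yields the pointwise estimate $|g(x,t)| \le \varepsilon|t|^{1/(p-1)} + C_\varepsilon|t|^{q-1}$, combined with H\"older's inequality and Sobolev embeddings. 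Since $g(x,\cdot) \equiv 0$ on $\Omega\setminus\Omega'$ and $g(x,0)=0$ on $\Omega'$ by continuity (via hypothesis~(1)), both $I$ and $u \mapsto I'(u)u$ vanish on $\partial Y = X\setminus Y$.

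Next I would check the monotonicity and growth assumption~(4) of Corollary~\ref{c1}. Homogeneity of degree $p/(p-1)$ makes $I_0'(tu)u/t^{\sigma-1}$ constant in $t$, hence trivially non-increasing. A pointwise computation using hypothesis~(2), namely the monotonicity of $s \mapsto g(x,s)/|s|^{1/(p-1)}$, shows that the integrand $g(x,tu(x))u(x)/t^{1/(p-1)}$ is strictly increasing in $t$ for every $x$ with $u(x)\neq 0$, whence $t \mapsto I'(tu)u/t^{\sigma-1}$ is strictly increasing. The asymptotic $I(tu)/t^{p/(p-1)} \to \infty$ uniformly on weakly compact subsets of $Y$ follows from hypothesis~(3) by the Fatou-type argument used in Corollary~\ref{c2}. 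Corollary~\ref{c1} then produces a critical point at the positive level $c = \inf_{\mathcal{N}\cap Y}\Phi$, and since $I'(u)u = 0$ while $I_0'(u)u > 0$ for $u \in X\setminus Y$ with $u \neq 0$, this $c$ is the absolute ground state level of $\Phi$.

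The one delicate step is the uniform asymptotic over weakly compact subsets of $Y$: along a sequence $u_n \rightharpoonup u_0$ inside a weakly compact $K \subset Y$, weak compactness forces $u_0 \in K \subset Y$, so $u_0 \not\equiv 0$ on $\Omega'$, and one then applies Fatou on the set $\{x \in \Omega' : u_0(x) \neq 0\}$ (of positive measure) together with hypothesis~(3) to obtain the required divergence. Apart from this standard-but-subtle compactness check, the argument is a direct transposition of Corollaries~\ref{c2}--\ref{c3} to the biharmonic setting, and no new ingredients are needed.
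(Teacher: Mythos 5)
Your proposal is correct and follows essentially the same route as the paper: the same decomposition $\Phi=I_0-I$ with $I_0(u)=\frac{p-1}{p}\|u\|^{\frac{p}{p-1}}$, the compact embeddings of $X$ guaranteeing complete continuity of $I'$ via hypothesis (4), and an application of Corollary \ref{c1} (with Remark \ref{rc1}) on the cone of functions not vanishing identically on $\Omega'$, exactly as in Corollaries \ref{c2}--\ref{c3}. The details you supply (the choice $\eta=\sigma=\frac{p}{p-1}$, the monotonicity check from hypothesis (2), and the Fatou argument for the uniform divergence on weakly compact subsets of $Y$) are precisely what the paper leaves implicit when it says ``proceeding as in the proof of Corollary \ref{c2}.''
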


\begin{proof}
The energy functional is given by $\Phi=I_0-I$, where
$$I_0(u)=\dfrac{p-1}{p} \displaystyle \|u\|^{\frac{p}{p-1}}  \quad \mbox{and} \quad I(u)=\displaystyle \int_{\Omega}G(x,u) \quad \mbox{for } u \in X.$$
Let $\sigma=  \frac{Np}{N(p-1)-2p}$ if  $p>\frac{N}{N-2}$ and $\sigma=\infty$ if  $p=\frac{N}{N-2}$.
By (4) and the compact embeddings (see e.g. \cite{S})
\begin{equation}\label{x} \begin{cases} X \subset \mathcal{C}(\overline{\Omega}), & \text{ if  }N=2  \text{ and } p>1 \text{ or } N\geq 3 \text{ and } 1<p<\frac{N}{N-2}, \\  X \subset L^r(\Omega),  & \text{ if } p \geq \frac{N}{N-2} \text{ and } 1\leq r < \sigma,
\end{cases}
\end{equation}
one may show that $I'$ is strongly continuous. Proceeding as in the proof of Corollary \ref{c2} one may check that Corollary \ref{c1} applies with $Y:=\{u \in X: bu \not \equiv 0\}$.
\end{proof}

\begin{cor}
	Let $g(x,u)=b(x)|u|^{r-2}u$  with $b^+ \not \equiv 0$ and $r>\frac{p}{p-1}$ such that
	$\frac{1}{p}+\frac{1}{r}>\frac{N-2}{N}$.
	Then $(E)$ has a positive ground state level.
\end{cor}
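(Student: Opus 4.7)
The plan is to apply Corollary \ref{c1} directly, in the spirit of Corollary \ref{c22}, with the decomposition $\Phi = I_0 - I$ where
$$
I_0(u) = \frac{p-1}{p}\|u\|^{p/(p-1)}, \qquad I(u) = \frac{1}{r}\int_\Omega b|u|^r,
$$
and the open cone
$$
Y := \left\{u \in X : \int_\Omega b|u|^r > 0\right\},
$$
which is nonempty because $b^+ \not\equiv 0$. The key analytic ingredient is the compact embedding $X \hookrightarrow L^r(\Omega)$: the hypothesis $\frac{1}{p}+\frac{1}{r} > \frac{N-2}{N}$ translates into $r < \frac{Np}{N(p-1)-2p}$ when $p > \frac{N}{N-2}$, which combined with \eqref{x} yields compactness; in the remaining ranges of $p$ the embedding is automatic. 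Consequently $I$ is weakly continuous on $X$ and $I'$ is completely continuous.

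Next I would verify the four conditions of Corollary \ref{c1} in turn. Weak lower semicontinuity of $I_0$ and of $u \mapsto I_0'(u)u = \|u\|^{p/(p-1)}$ is immediate since both are powers of the norm. Setting $\eta := p/(p-1)$, one has $I_0'(u)u = \|u\|^\eta$ on all of $X$, while $\|I'(u)\|_{X^\ast} = O(\|u\|^{r-1}) = o(\|u\|^{\eta-1})$ as $u \to 0$ thanks to $r > \eta$. On $\partial Y = \{u : \int_\Omega b|u|^r = 0\}$ both $I(u)$ and $I'(u)u = \int_\Omega b|u|^r$ vanish. Choosing $\sigma = \eta$, the map $t \mapsto I_0'(tu)u / t^{\sigma-1} \equiv \|u\|^\eta$ is constant (hence nonincreasing), $t \mapsto I'(tu)u / t^{\sigma-1} = t^{r-\sigma}\int_\Omega b|u|^r$ is strictly increasing on $Y$ since $r > \sigma$, $\lim_{t\to\infty} I_0(tu)/t^\sigma$ is finite, and $\lim_{t\to\infty} I(tu)/t^\sigma = +\infty$ uniformly on weakly compact subsets of $Y$.

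The one point needing a brief argument is this last uniform limit: on any weakly compact $K \subset Y$, the functional $u \mapsto \int_\Omega b|u|^r$ is weakly continuous on $X$ by the compact embedding and strictly positive on $Y$, so it attains a positive minimum on $K$, from which uniformity follows. Corollary \ref{c1} then produces $c > 0$ achieved by a critical point of $\Phi$. Since for every $u \in X \setminus Y$ with $u \neq 0$ we have $I'(u)u = \int_\Omega b|u|^r \leq 0 < \|u\|^\eta = I_0'(u)u$, the second assertion of Corollary \ref{c1} promotes $c$ to the ground state level of $\Phi$, and hence of $(E)$. I do not expect any real obstacle beyond the uniformity verification, which is precisely where the subcriticality condition $\frac{1}{p}+\frac{1}{r}>\frac{N-2}{N}$ is consumed.
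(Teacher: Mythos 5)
Your proposal is correct and follows essentially the same route as the paper, which simply invokes Corollary \ref{c1} with the cone $Y=\{u\in X:\int_\Omega b|u|^r>0\}$, the decomposition $I_0(u)=\frac{p-1}{p}\|u\|^{p/(p-1)}$, $I(u)=\frac{1}{r}\int_\Omega b|u|^r$, and the compact embedding \eqref{x} guaranteed by $\frac{1}{p}+\frac{1}{r}>\frac{N-2}{N}$. Your verification of the four hypotheses (with $\eta=\sigma=\frac{p}{p-1}$) and of the sign condition on $X\setminus Y$ is exactly the argument the paper leaves implicit.
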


\begin{proof}
Arguing as in the previous proof one may show that Corollary \ref{c1} applies with $Y:=\{u \in X: \int_\Omega b(x)|u|^r>0\}$.
\end{proof}

In the next result we deal with $\Lambda_1:=\displaystyle \inf_{u \in X \setminus \{0\}} \frac{\int_{\Omega} |\Delta u|^{\frac{p}{p-1}}}{\int_{\Omega} |u|^{\frac{p}{p-1}}}$,  the first eigenvalue of the problem
$$\Delta \left(|\Delta u|^{\frac{2-p}{p-1}}\Delta u\right) =\lambda |u|^{\frac{2-p}{p-1}}u \mbox{ in } \Omega, \quad u=\Delta u=0 \mbox{ on } \partial \Omega.$$
It is known that $\Lambda_1$ is simple \cite{DO}. We denote by $\psi_1$ a positive eigenfunction associated to $\Lambda_1$.
Let us set $$\Lambda^*:=\displaystyle \inf \left\{\frac{\int_{\Omega} |\Delta u|^{\frac{p}{p-1}}}{\int_{\Omega} |u|^{\frac{p}{p-1}}}: u \in X \setminus \{0\}, \int_\Omega b|u|^r \geq 0\right\}.$$
It is straightforward that $\Lambda^*>\Lambda_1$ if $\int_\Omega b\psi_1^r<0$.

\begin{cor}
	Let $g(x,u)=\lambda |u|^{\frac{2-p}{p-1}}u+  b(x)|u|^{r-2}u$  with $b^+ \not \equiv 0$ and $r>\frac{p}{p-1}$ such that
	$\frac{1}{p}+\frac{1}{r}>\frac{N-2}{N}$.
	\begin{enumerate}
\item 	If $\lambda<\Lambda_1$ then $(E)$ has a ground state solution at a positive level.
\item If $\int_\Omega b\psi_1^r<0$ and $\Lambda_1<\lambda<\Lambda^*$ then $(E)$ has a ground state solution at a negative level and a second solution.
	\end{enumerate}

\end{cor}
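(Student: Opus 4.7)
The plan is to decompose $\Phi=I_0-I$ with $I_0(u):=\tfrac{p-1}{p}A(u)$ and $I(u):=\tfrac{1}{r}B(u)$, where $A(u):=\int_\Omega|\Delta u|^{p/(p-1)}-\lambda\int_\Omega|u|^{p/(p-1)}$ and $B(u):=\int_\Omega b|u|^r$, so that $J=A-B$ and $\Phi=c_{p,r}A=c_{p,r}B$ on $\mathcal{N}$, with $c_{p,r}:=\tfrac{p-1}{p}-\tfrac{1}{r}>0$. I first record that the integrability hypothesis $\tfrac{1}{p}+\tfrac{1}{r}>\tfrac{N-2}{N}$ gives compact embeddings $X\hookrightarrow L^r(\Omega)$ and $X\hookrightarrow L^{p/(p-1)}(\Omega)$, so both $B$ and $u\mapsto\|u\|_{p/(p-1)}^{p/(p-1)}$ are weakly continuous; together with uniform convexity of $X$ and weak lower semicontinuity of $u\mapsto\int_\Omega|\Delta u|^{p/(p-1)}$, this delivers $(HJ)$ at every point.

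For (1), the plan is to invoke Corollary \ref{c1} on the cone $Y:=\{u\in X:B(u)>0\}$. Here $\lambda<\Lambda_1$ forces $A(u)\ge(1-\lambda/\Lambda_1)\|u\|^{p/(p-1)}>0$ for every nonzero $u$, so the hypotheses of Corollary \ref{c1} are routinely verified with $\eta=\sigma=p/(p-1)$ (exactly as in the proof of the previous corollary), and the pointwise bound $I'(u)u=B(u)\le 0<A(u)=I_0'(u)u$ on $X\setminus Y$ identifies the resulting positive critical level as the ground state.

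For (2), I will work with two distinct cones and apply two different theorems from Section 2. On $Y_1:=\{B>0\}$ I will invoke Theorem \ref{thm2} via Lemma \ref{l1}. Weak continuity of $B$ places $\overline{Y}_1\subset\{B\ge 0\}$, and the definition of $\Lambda^*$ combined with $\lambda<\Lambda^*$ yields $A(u)\ge(1-\lambda/\Lambda^*)\|u\|^{p/(p-1)}>0$ on $\overline{Y}_1\setminus\{0\}$; this gives (H1) throughout $Y_1$. For Lemma \ref{l1} I choose $\sigma\in(p/(p-1),r)$ so that $\Phi(tu)/t^\sigma\to-\infty$ uniformly on weakly compact subsets of $Y_1$ (using that the weakly continuous $B$ attains a strictly positive minimum on such sets); the lower bound on $A$ gives $I_0(tu)\to\infty$ uniformly on $\mathcal{S}\cap Y_1$ while $I=B/r$ is weakly continuous and vanishes on $\partial Y_1$; and the inequality $J(u)\ge C\|u\|^{p/(p-1)}-C'\|u\|^r$ handles condition~(3). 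Theorem \ref{thm2} then produces a critical point at the positive level $c_1:=\inf_{\mathcal{N}\cap Y_1}\Phi$.

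For the ground state in (2), I set $Y_2:=\{u\in X:A(u)<0,\,B(u)<0\}$, which contains $\psi_1$ because $\lambda>\Lambda_1$ and $\int_\Omega b\psi_1^r<0$. A direct computation (using $(r-1)(p-1)>1$) shows that for $u\in Y_2$ the fibering map $\varphi_u(t)=\tfrac{p-1}{p}t^{p/(p-1)}A(u)-\tfrac{t^r}{r}B(u)$ has a unique non-degenerate minimizer of negative value, so (H3) holds. The main obstacle is verifying $(HY)_{c_2}$: along a minimizing sequence $(u_n)\subset\mathcal{N}\cap Y_2$, the identity $\Phi=c_{p,r}A=c_{p,r}B$ makes $A(u_n)$ and $B(u_n)$ bounded and negative, and if $\|u_n\|\to\infty$ then $v_n:=u_n/\|u_n\|$ satisfies $A(v_n),B(v_n)\to 0$, so any weak limit $v$ must be nonzero and satisfy $B(v)=0$ and $A(v)\le 0$, contradicting the definition of $\Lambda^*$. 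With boundedness in hand, weak continuity of $B$ and weak lower semicontinuity of $A$ place the weak limit in $Y_2$. Since $\Phi=c_{p,r}A\ge 0$ on $\mathcal{N}\setminus Y_2$ (because $A=B\ge 0$ there), Theorem \ref{tp2} produces the ground state at the negative level $c_2<0<c_1$, which is automatically distinct from the critical point produced above.
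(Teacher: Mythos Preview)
Your proof is correct. For part (1) and for the second (positive-level) solution in part (2), your argument coincides with the paper's: both reduce to Corollary \ref{c1} (equivalently Theorem \ref{thm2} via Lemma \ref{l1}) on the cone $Y_1=\{B>0\}$, using that $\lambda<\Lambda^*$ yields $A(u)\ge(1-\lambda/\Lambda^*)\|u\|^{p/(p-1)}$ on $\overline{Y}_1\setminus\{0\}$.

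For the negative ground state in part (2) you take a genuinely different route. The paper works on the cone $\{A<0\}$ and observes that the very definition of $\Lambda^*$ forces $B(u)\le -C\|u\|^r$ there (so in fact $\{A<0\}\subset\{B<0\}$, and your cone $Y_2$ coincides with the paper's). This immediately gives (H2) together with \emph{coercivity} of $\Phi$ on $\{A<0\}$, and Theorem \ref{tp1} applies directly: no (HJ), no ad hoc verification of $(HY)_{c_2}$, just $\Phi\ge 0$ on $\partial Y_2$ and on $\mathcal{N}\setminus Y_2$. Your approach instead invokes Theorem \ref{tp2}, which requires (HJ) and a separate boundedness/compactness argument for $(HY)_{c_2}$; your boundedness proof (via $A(v_n),B(v_n)\to 0$ forcing a nonzero weak limit with $B=0$, $A\le 0$, contradicting $\lambda<\Lambda^*$) is correct, but it is more work than needed. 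The paper's route is shorter here because (H2) actually holds globally on the cone, so the extra flexibility of (H3) and Theorem \ref{tp2} is not used.
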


\begin{proof}
We have now $\Phi=I_0-I$, where
$$I_0(u) := \dfrac{p-1}{p} \displaystyle\int_{\Omega} \left(|\Delta u|^{\frac{p}{p-1}} -\lambda |u|^{\frac{p}{p-1}}\right) \quad \mbox{ and } \quad I(u):=
\displaystyle \int_{\Omega}b(x)|u|^r.$$
Set $Y_1:=\{u \in X: \int_\Omega b(x)|u|^r>0\}$.
One may easily show that for any $\lambda<\Lambda^*$ there exists $C>0$ such that $I_0'(u)u\geq C\|u\|^{\frac{p}{p-1}}$ for any $u \in \overline{Y}_1$. By the compact embedding \eqref{x} we have that $I'$ is strongly continuous. Note also that $I(u)=I'(u)u=0$ for $u \in \partial Y_1$, and since $r>\frac{p}{p-1}$ we have $I'(u)=o(\|u\|^{\frac{p}{p-1}})$. Finally, condition (4) of Corollary \ref{c1} holds with $\sigma=\frac{p}{p-1}$. Therefore $c_1:=\inf_{\mathcal{N}\cap Y_1} \Phi$ is positive and achieved by a critical point of $\Phi$.

 If $\lambda<\Lambda_1$ then $I_0'(u)u\geq C\|u\|^{\frac{p}{p-1}}$ for any $u \in X$ and since $I'(u)u\leq 0$ for any $u \in X \setminus Y_1$, we infer that $c_1$ is the ground state level of $\Phi$.

Now, if $\int_\Omega b\psi_1^r<0$ and $\Lambda_1<\lambda<\Lambda^*$ then 
$Y_2:=\{u \in X: I_0'(u)u<0\}$ is nonempty and there exists $C>0$ such that
$I'(u)u\geq C\|u\|^r$ for any $u \in \overline{Y}_2$. It follows that $\Phi$ is coercive in $Y_2$ and $\Phi \ge 0$ on $\partial Y_2$ and on $\mathcal{N} \setminus Y_2$. Thus $c_2:=\inf_{\mathcal{N}\cap Y_2} \Phi<0$ is the ground state level of $\Phi$ for $\Lambda_1<\lambda<\Lambda^*$.
\end{proof}

\medskip


\begin{thebibliography}{99}
	
	
	 \bibitem{AH} K. Adriouch, A. El Hamidi, The Nehari manifold for systems of nonlinear elliptic equations. Nonlinear Anal. 64 (2006), no. 10, 2149–2167
	
	
	
	
	
	
	
	\bibitem{BWW} T. Bartsch,  Z.-Q. Wang, Zhi-Qiang, M. Willem, The Dirichlet problem for superlinear elliptic equations. Stationary partial differential equations. Vol. II, 1–55, Handb. Differ. Equ., Elsevier/North-Holland, Amsterdam, 2005.
	
	
	
	
	\bibitem{BT} V. Bobkov, M. Tanaka,  {\it On positive solutions for $(p,q)$-Laplace equations with two parameters}. Calculus of Variations and Partial Differential Equations, 54(3), 3277-3301.
	
	\bibitem{BT2} V. Bobkov, M. Tanaka, {\it Remarks on minimizers for (p,q)-Laplace equations with two parameters.}
	Communications on Pure and Applied Analysis, 17(3), (2018) 1219-1253.
	
	\bibitem{BT3} V. Bobkov, M. Tanaka, {\it Multiplicity of positive solutions for $(p,q)$-Laplace equations with two parameters}. (2020), arXiv:2007.11623.
	
	\bibitem{BMR} D. Bonheure, E. Moreira dos Santos, and M. Ramos. Ground state and non-ground state solutions of some strongly coupled elliptic systems. Trans. Amer. Math. Soc., 364(1):447–491, 2012.
	
	\bibitem{BM} Y. Bozhkov,  E,  Mitidieri, Existence  of  multiple  solutions  for  quasilinear  systems  viafibering method, J. Differential Equations,190(2003), no. 1, 239–267. 
	

 	\bibitem{BW1} K.J. Brown, T-F. Wu, A semilinear elliptic system involving nonlinear boundary condition and sign-changing weight function. J. Math. Anal. Appl. 337 (2008), no. 2, 1326–1336.
	
	\bibitem{BW} K.J. Brown, T-F. Wu, A fibering map approach to a potential operator equation and its applications. Differential Integral Equations 22 (2009), no. 11-12, 1097–1114.
	
	

	
	\bibitem{CO} B. Chen, Z. Q. Ou,  Existence and bifurcation behavior of positive solutions for a class of Kirchhoff-type problems, Comput. Math. Appl. 77(10),  (2019) 2859--2866.
	
	\bibitem{CKW} C.-Y. Chen, Y.-C. Kuo, T.-F. Wu,  The Nehari manifold for a Kirchhoff type problem involving sign-changing weight functions. J. Differential Equations 250 (2011), no. 4, 1876–1908.
	
	\bibitem{CI} L. Cherfils and Y. Ilyasov, On the stationary solutions of generalized reaction diffusion equations with $p$\&$q$-Laplacian, Commun. Pure Appl. Anal. 4 (2005), no. 1, 9–-22.
	
	\bibitem{CW} S. Cingolani, T. Weth, On the planar Schrödinger-Poisson system, Ann. Inst. H. Poincaré Anal. Non Linéaire 33 (2016), 169-197.
	
	
	
	\bibitem{D} G. Dai, Eigenvalues, global bifurcation and positive solutions for a class of nonlocal elliptic equations, Topol. Methods Nonlinear Anal. 48(1), (2016) 213--233.
	
	
	
	
	
	\bibitem{DO} P. Drábek, M. Ôtani, Global bifurcation result for the p-biharmonic operator. Electron. J. Differential Equations 2001, No. 48, 19 pp.
	 
	\bibitem{FP} G. M. Figueiredo, M. T. O. Pimenta,  Nehari method for locally Lipschitz functionals with examples in problems in the space of bounded variation functions. NoDEA Nonlinear Differential Equations Appl. 25 (2018), no. 5, Paper No. 47, 18 pp.
	
	\bibitem{FRQ}G. M. Figueiredo, H. Ramos Quoirin, Ground states of elliptic problems involving nonhomogeneous operators, Indiana Univ. Math. J., 65(3), 2016, pg 779--795.
	
	
	
	
	
	\bibitem{I} Y. Il'yasov, On nonlocal existence results for elliptic equations with convex-concave nonlinearities, Nonlinear Anal. 61 (2005), no. 1-2, 211--236.
	
	\bibitem{I1} Y. Il'yasov, On extreme values of Nehari manifold method via nonlinear Rayleigh's quotient, Topol. Methods Nonlinear Anal. 49 (2017), no. 2, 683–714.
	
 \bibitem{LTW} Q. Li, K. Teng,  X. Wu, Ground states for Kirchhoff-type equations with critical growth. Commun. Pure Appl. Anal. 17 (2018), no. 6, 2623–2638.

\bibitem{LY} Q. Li, Z. Yang, Multiplicity of positive solutions for a p-q-Laplacian system with concave and critical nonlinearities. J. Math. Anal. Appl. 423 (2015), no. 1, 660–680. 
	
	 \bibitem{LWW} J. Liu, Y. Wang,  Z. Wang, Solutions for quasilinear Schrödinger equations via the Nehari method. Comm. Partial Differential Equations 29 (2004), no. 5-6, 879–901.
	
	\bibitem{MR} A. Molino, J. D. Rossi,  A concave-convex problem with a variable operator. Calc. Var. Partial Differential Equations 57 (2018), no. 1, Paper No. 10, 26 pp.
	
	\bibitem{E1} E. Moreira dos Santos, Multiplicity of solutions for a fourth-order quasilinear nonho- mogeneous equation. J. Math. Anal. Appl., 342(1):277–297, 2008.
	
	\bibitem{E2} E. Moreira dos Santos, On a fourth-order quasilinear elliptic equation of concave-convex type. NoDEA Nonlinear Differential Equations Appl., 16(3):297–326, 2009.
	
	
	 \bibitem{PRR} N. S. Papageorgiou, V. Rădulescu, D. Repovš,  Ground state and nodal solutions for a class of double phase problems. Z. Angew. Math. Phys. 71 (2020), no. 1, Paper No. 15, 15 pp.
	
	\bibitem{P} S. I. Pohozeav,	Nonlinear variational probelms via the fibering method
	Handbook of Differential Equations: Stationary Partial Differential Equations, Vol. 5, Elsevier (2008), pp. 49-209
	
	\bibitem{RQ} H. Ramos Quoirin, An indefinite type equation involving two p-Laplacians. J. Math. Anal. Appl. 387 (2012), no. 1, 189–200.
	
	\bibitem{S} A. Salvatore,  Multiple solutions for elliptic systems with nonlinearities of arbitrary growth. J. Differential Equations 244 (2008), no. 10, 2529–2544.

	\bibitem{SM} K. Silva, A. Macedo,  On the extremal parameters curve of a quasilinear elliptic system of differential equations. NoDEA Nonlinear Differential Equations Appl. 25 (2018), no. 4, Paper No. 36, 19 pp. 
	
	\bibitem{SS} K. Silva, S. M. Sousa, Finer analysis of the Nehari set associated to a class of Kirchhoff-type equations, SN Partial Differ. Equ. Appl. 1, 43 (2020).
	
		\bibitem{SS1} K. Silva, S. M. Sousa, Multiplicity of positive solutions for a gradient type cooperative/competitive elliptic system, Electron. J. Differential Equations 2020, Paper No. 10, 14 pp.
	
	
	\bibitem{SW} A. Szulkin and T. Weth, The method of Nehari manifold. Handbook of nonconvex analysis and applications, 597--632, Int. Press, Somerville, MA, 2010.
	
	\bibitem{T} M. Tanaka,  Generalized eigenvalue problems for(p,q)-Laplacian with indefinite weight. J. Math. Anal.Appl.419(2), 1181–1192 (2014).
	
	\bibitem{Ta} G. Tarantello, On nonhomogeneous elliptic equations involving critical Sobolev exponent.
	Ann. Inst. H. Poincar\'e Anal. Non Lin\'eaire 9 (1992), no. 3,
	281--304.
	
	
	 \bibitem{Ws} T.-F. Wu, The Nehari manifold for a semilinear elliptic system involving sign-changing weight functions. Nonlinear Anal. 68 (2008), no. 6, 1733–1745.
\end{thebibliography}
\end{document}